\definecolor{Myblue}{rgb}{0,0,0.6}  
	\pgfplotsset{width=7cm,compat=1.8}
\tikzset{
    string/.style={draw=#1, postaction={decorate}, decoration={markings,mark=at position .51 with {\arrow[color=#1]{>}}}},
    costring/.style={draw=#1, postaction={decorate}, decoration={markings,mark=at position .51 with {\arrow[draw=#1]{<}}}},
    ostring/.style={draw=#1, postaction={decorate}, decoration={markings,mark=at position .47 with {\arrow[draw=#1]{>}}}},
    ustring/.style={draw=#1, postaction={decorate}, decoration={markings,mark=at position .56 with {\arrow[draw=#1]{>}}}},
    oostring/.style={draw=#1, postaction={decorate}, decoration={markings,mark=at position .43 with {\arrow[draw=#1]{>}}}},
    uustring/.style={draw=#1, postaction={decorate}, decoration={markings,mark=at position .59 with {\arrow[draw=#1]{>}}}},
    directed/.style={string=blue!50!black}, 
    odirected/.style={ostring=blue!50!black}, 
    udirected/.style={ustring=blue!50!black}, 
    oodirected/.style={oostring=blue!50!black}, 
    uudirected/.style={uustring=blue!50!black},     
    redirected/.style={costring= blue!50!black},
    redirectedgreen/.style={costring= green!50!black},
    directedgreen/.style={string= green!50!black},
    redirectedlightgreen/.style={costring= green!65!black},
    directedlightgreen/.style={string= green!65!black},
}
\tikzset{-dot-/.style={decoration={
  markings,
  mark=at position 0.5 with {\fill circle (1.875pt);}},postaction={decorate}}}
\tikzset{
	Fdot/.style={circle, draw, fill, inner sep=0pt}, 
	Odot/.style={circle, draw, inner sep=0.1pt, minimum size=0.1cm}
	}
\newcommand\tikzzbox[1]
\newcommand{\raisemath}[1]{\mathpalette{\raisem@th{#1}}}
\newcommand{\raisem@th}[3]{\raisebox{#1}{$#2#3$}}
\renewcommand{\geq}{\geqslant}
\newcommand{\rp}[1]{\widetilde{#1}}
\newcommand{\D}{\mathds{D}}
\newcommand{\R}{\mathds{R}}
\newcommand{\Z}{\mathds{Z}}
\def\1{\ifmmode\mathrm{1\!l}\else\mbox{\(\mathrm{1\!l}\)}\fi}
\newcommand{\one}{\mathbbm{1}}
\newcommand{\be}{\begin{equation}}
\newcommand{\ee}{\end{equation}}
\newcommand{\bes}{\begin{equation*}}
\newcommand{\ees}{\end{equation*}}
\newcommand{\DC}{\mathds{D}^{\mathcal C}}
\newcommand{\interior}[1]{%
  {\kern0pt#1}^{\mathrm{o}}%
}
\newcommand{\id}{\text{id}}
\newcommand{\Hom}{\operatorname{Hom}}
\newcommand{\End}{\operatorname{End}}
\def\lra{\longrightarrow}
\def\lmt{\longmapsto}
\newcommand{\Borddc}{\widehat{\operatorname{Bord}}{}^{\mathrm{def}}_3(\DC)}
\newcommand{\Borddefn}[1] {\operatorname{Bord}^{\mathrm{def}}_{#1}}
\newcommand{\Bordribn}[1] {\operatorname{Bord}^{\mathrm{rib}}_{#1}}
\newcommand{\Bordriben}[1] {\widehat{\operatorname{Bord}}{}^{\mathrm{rib}}_{#1}}
\newcommand{\Borddefen}[1] {\widehat{\operatorname{Bord}}{}^{\mathrm{def}}_{#1}}
\newcommand{\zz}{\mathcal{Z}}
\newcommand{\zzc}{\mathcal{Z}^{\mathcal C}}
\newcommand{\zrt}{\mathcal{Z}^{\text{RT,}\mathcal C}}
\newcommand{\Vect}{\operatorname{Vect}}
\newcommand{\Vectk}{\operatorname{Vect}_\Bbbk}
\newcommand{\eps}{\varepsilon}
\newcommand{\Cat}[1]         {\operatorname{\mathcal{#1}}}
\newcommand{\FEnd}{\mathcal{E}\hspace*{-.7pt}nd}
\definecolor{DarkViolet} {rgb}{0.580392,0.000000,0.827450}
\newcommand\arxiv[2]      {\href{http://arXiv.org/abs/#1}{#2}}
\newcommand\doi[2]        {\href{http://dx.doi.org/#1}{#2}}
\newcommand\httpurl[2]    {\href{http://#1}{#2}}
\theoremstyle{definition}
\newtheorem{definition}{Definition}
\newtheorem{theorem}[definition]{Theorem}
\newtheorem{definitionlemma}[definition]{Definition and Lemma}
\newtheorem{lemma}[definition]{Lemma}
\newtheorem{remark}[definition]{Remark}
\newtheorem{example}[definition]{Example}
\newtheorem{construction}[definition]{Construction}
\numberwithin{equation}{section}
\numberwithin{definition}{section}
\numberwithin{figure}{section}
\newcommand\void[1]{}
\begin{document}

\title{Line and surface defects\\ 
in Reshetikhin-Turaev TQFT}

\author{%
Nils Carqueville$^*$ \quad
Ingo Runkel$^\#$ \quad
Gregor Schaumann$^*$%
\\[0.5cm]
   \normalsize{\texttt{\href{mailto:nils.carqueville@univie.ac.at}{nils.carqueville@univie.ac.at}}} \\  %
   \normalsize{\texttt{\href{mailto:ingo.runkel@uni-hamburg.de}{ingo.runkel@uni-hamburg.de}}} \\
   \normalsize{\texttt{\href{mailto:gregor.schaumann@univie.ac.at}{gregor.schaumann@univie.ac.at}}}\\[0.1cm]
   {\normalsize\slshape $^*$Fakult\"at f\"ur Mathematik, Universit\"at Wien, Austria}\\[-0.1cm]
   {\normalsize\slshape $^\#$Fachbereich Mathematik, Universit\"{a}t Hamburg, Germany}\\[-0.1cm]
}

\date{}
\maketitle

\begin{abstract}
A modular tensor category $\Cat{C}$ gives rise to a Reshetikhin-Turaev type topological quantum field theory which is defined on 3-dimensional bordisms with embedded $\Cat{C}$-coloured ribbon graphs. 
We extend this construction to include bordisms with surface defects which in turn can meet along line defects. 
The surface defects are labelled by $\Delta$-separable symmetric  
Frobenius algebras and the line defects by ``multi-modules'' 
which are equivariant with respect to a cyclic group action. 
Our invariant cannot distinguish non-isotopic embeddings of 2-spheres, but we give an example where it distinguishes non-isotopic embeddings of 2-tori.
\end{abstract}

\newpage

\tableofcontents

\section{Introduction}
\label{sec:intro}

The study of field theories with defects of various dimensions has seen much recent activity. 
Of particular relevance for the present paper are works concerned with general properties of topological quantum field theories (TQFTs) with defects, such as
\cite{
ks1012.0911,
KK1104.5047,
dkr1107.0495,
fsv1203.4568,
BMS,
Fuchs:2015daa,
CMS,
CRS1}.
An introduction to 2-dimensional TQFTs with defects can be found in \cite{2ddTQFTreview}.

The present paper is the second in a series whose aim is to study orbifolds of TQFTs via their defects. 
In the first paper \cite{CRS1} we developed a bordism category for $n$-dimensional TQFTs with defects, described an orbifold procedure in terms of defects, 
and gave the algebraic conditions for a collection of defects to serve as an input for this procedure.
	In the third paper \cite{CRS3} we will present several examples of such orbifolds for Reshetikhin-Turaev theory, including those coming from group extensions of tensor categories, and we will identify Turaev-Viro theory as the ``orbifold of the trivial theory''. 
The present paper lays the groundwork for 
	these applications, 
and to our knowledge provides the first systematic construction of Reshetikhin-Turaev TQFT with surface defects
	as a functor from stratified and suitably decorated bordisms to vector spaces.
This has potential applications beyond the orbifold construction, in particular in topological quantum computation \cite{KitaevTQC, BJQ, fs1310.1329}, 
	which is another motivation for the present work.

\medskip

We will be concerned with 3-dimensional TQFTs with defects. 
The relevant bordism category has morphisms which are equivalence classes of 
oriented stratified 3-manifolds \cite{CMS,CRS1}. For each $j \in \{1,2,3 \}$, the $j$-strata (i.\,e.~the connected components of the prescribed submanifolds of dimension~$j$) carry a label from a set $D_j$ of defect labels. We denote this bordism category by 
\be\label{eq:intro-borddef}
 \Borddefn{3}(\D) \, ,
\ee 
where $\D$, the set of ``defect data'', contains the three chosen sets $D_1, D_2,D_3$ and additional information on allowed adjacencies between the defects of various codimension. 
All this is reviewed in Section~\ref{sec:defectTQFTs}.

Reshetikhin-Turaev TQFTs \cite{retu2,tur} are defined in terms of a modular tensor category $\Cat{C}$ and are 3-dimensional TQFTs that can be evaluated on bordisms with embedded $\Cat{C}$-coloured ribbon tangles.
Given $\Cat{C}$, we define a set of defect data $\D^{\Cat{C}}$ and we give a prescription in terms of the underlying Reshetikhin-Turaev TQFT which produces a symmetric monoidal functor 
\be\label{eq:intro-RTdef}
\zzc \colon 
\Borddefen{3}(\D^{\Cat{C}}) \lra \Vect \, .
\ee
The hat refers to the extension of the bordism category necessary to absorb the gluing anomaly \cite{tur} (if~$\Cat{C}$ has trivial anomaly, the functor \eqref{eq:intro-RTdef} factors as $\Borddefen{3}(\D^{\Cat{C}}) \to \Borddefn{3}(\D^{\Cat{C}}) \to \Vect$),
see  Sections~\ref{sec:MTCRT} and~\ref{sec:RTdef}. 
This extension of Reshetikhin-Turaev TQFT to include
surface defects is the main contribution of the present paper. 
 
\medskip 
 
In the remainder of the introduction we will outline the defect data $\D^{\Cat{C}}$ and the construction of the functor $\zzc$ in \eqref{eq:intro-RTdef}. Let thus $\Cat{C}$ be a modular tensor category. We start by describing the sets $D_j^{\Cat{C}}$ of defect labels which decorate the $j$-strata of morphisms in $\Borddefen{3}(\DC)$.
\begin{itemize}[leftmargin=24pt]
\item[$D_3^{\Cat{C}}$:]
These are the labels for the top-dimensional strata. The present formalism can only handle the situation that there is precisely one such label, namely~$\Cat{C}$, 
\be
D_3^{\Cat{C}} = \{ \Cat{C} \} \, , 
\ee
reflecting the fact that we consider only surface defects from a modular tensor category to itself.
Ultimately one would like a theory 
which includes surface defects between different modular tensor categories,  cf.\ Remark~\ref{rem:intro-whatelse} below.
\item[$D_2^{\Cat{C}}$:]
The labels for surface defects are given by Frobenius algebras in $\Cat{C}$ whose pairing is symmetric and whose coproduct is right inverse to the product, $\mu \circ \Delta = \id_A$, a condition we refer to as ``$\Delta$-separable''. Thus:
\be
D_2^{\Cat C} = \big\{ \Delta\text{-separable symmetric Frobenius algebras in } \Cat C \big\} \, .
\ee
This description of surface defects goes back to \cite{ks1012.0911}, where the construction in \cite{tft1} of consistent sets of correlators of 2-dimensional conformal field theories in terms of such algebras was re-interpreted using surface defects. 
It was studied in detail in \cite{fsv1203.4568}, where also a Morita invariant formulation in terms of module categories is given.
	In the present paper we work in the formulation of defect TQFTs as developed in \cite{CMS, CRS1}. 
\item[$D_1^{\Cat{C}}$:]
The description of line defects is 
	(original and) 
more complicated than the previous two cases. 
An arbitrary (but finite) number of surface defects 
can join at a line defect, much like the pages of a book join at the spine, see Figure~\ref{fig:intro-linedef}. 
If the algebras describing the surface defects are $A_1,A_2,\dots,A_n \in D_2^{\Cat{C}}$, 
the obvious guess for the line defect label is an $A_1 \otimes \cdots \otimes A_n$-module~$M$. 
This is almost correct, except for two effects that still need to be taken into account. 

The first effect is that not all surface defects meeting at the line defect need to be oriented in the same way. This will result in some of the~$A_i$ being replaced by their opposite algebra $A_i^{\mathrm{op}}$. 
We will write $A_i^+ = A_i$ and $A_i^-=A_i^\mathrm{op}$.

The second issue is that in our approach, the surface defects around a line defect are only cyclically ordered (via the orientations of the line defect and the surrounding 3-manifold), but they have no total order, see again Figure~\ref{fig:intro-linedef}. Assume for example that all $A_i$ are equal: $A_i = A$ for some $A \in D_2^{\Cat{C}}$ and all $i \in \{1,\dots,n\}$. 
One can define an action of the cyclic group $C_n$ on $A^{\otimes n}$-modules, and we label the corresponding line defects by modules which are $C_n$-equivariant, see Section~\ref{sec:modules}.
Of course, if~$n$ surface defects meet at a line defect, the maximal cyclic symmetry may be any divisor of~$n$, including $1$ (i.\,e.\ no symmetry). This leads us to the notion of a ``multi-module with cyclic symmetry'' (Definitions~\ref{def:symmetricmodule} and~\ref{def:cyclic-with-signs}).

A 1-stratum with no attached 0-stratum is labelled by an object $\Cat{C}$ with trivial twist. Altogether we thus set
$
D_1^{\Cat C} := \bigsqcup_{n \in \mathbb{Z}_{\geqslant 0}} L_n
$, 
where $L_0 = \{ X \in \Cat{C} \,\big|\, \theta_X = \id_X \}$,  
and, for $n>0$,
\begin{align}
	L_n =\, &\big\{ 
	\big((A_1,\eps_1), (A_2,\eps_2), \dots, (A_n,\eps_n), M\big) \;\big|\; 
					A_i \in D_2^{\Cat C} , \; 
					\eps_i \in \{ \pm \} , \; 
	 \nonumber\\ & \qquad
					M \text{ is an $A_1^{\eps_1} \otimes \cdots \otimes A_n^{\eps_n}$-module equivariant for the maximal}
	 \nonumber\\[-.2em] & \qquad
					\text{cyclic symmetry of }
					\big((A_1,\eps_1), (A_2,\eps_2), \dots, (A_n,\eps_n)\big)
					\big\} \ .
	\label{eq:D1C}
\end{align}
\end{itemize}

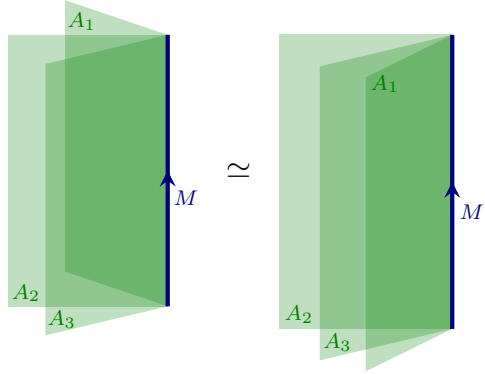
\begin{figure}[tb]
$$
\tikzzbox{\begin{tikzpicture}[very thick,scale=1.2,color=blue!50!black, baseline=-1.9cm]
%
\fill [green!50!black,opacity=0.25] (0,0) -- ($(0,0)+(130:1.75 and 0.5)$) -- ($(0,-3)+(130:1.75 and 0.5)$) -- (0,-3) -- (0,0);
\fill [green!50!black,opacity=0.25] (0,0) -- ($(0,0)+(180:1.75 and 0.5)$) -- ($(0,-3)+(180:1.75 and 0.5)$) -- (0,-3) -- (0,0);
\fill [green!50!black,opacity=0.25] (0,0) -- ($(0,0)+(220:1.75 and 0.5)$) -- ($(0,-3)+(220:1.75 and 0.5)$) -- (0,-3) -- (0,0);
%
 \draw[
	color=blue!50!black, 
	ultra thick, 
	>=stealth,
	decoration={markings, mark=at position 0.5 with {\arrow{>}},
					}, postaction={decorate}
	] 
(0,-3) -- (0,0);
%
\draw[line width=1, color=blue!50!black] (0.2, -1.8) node[line width=0pt] (beta) {{\scriptsize$M$}};
\draw[line width=1, color=green!50!black] (-0.94, 0.15) node[line width=0pt] (beta) {{\scriptsize$A_1$}};
\draw[line width=1, color=green!50!black] (-1.55, -2.85) node[line width=0pt] (beta) {{\scriptsize$A_2$}};
\draw[line width=1, color=green!50!black] (-1.16, -3.14) node[line width=0pt] (beta) {{\scriptsize$A_3$}};
\end{tikzpicture}}
\; 
\simeq
\; \, 
\tikzzbox{\begin{tikzpicture}[very thick,scale=1.3,color=blue!50!black, baseline=-1.9cm]
%
\fill [green!50!black,opacity=0.25] (0,0) -- ($(0,0)+(180:1.75 and 0.5)$) -- ($(0,-3)+(180:1.75 and 0.5)$) -- (0,-3) -- (0,0);
\fill [green!50!black,opacity=0.25] (0,0) -- ($(0,0)+(220:1.75 and 0.5)$) -- ($(0,-3)+(220:1.75 and 0.5)$) -- (0,-3) -- (0,0);
\fill [green!50!black,opacity=0.25] (0,0) -- ($(0,0)+(240:1.75 and 0.5)$) -- ($(0,-3)+(240:1.75 and 0.5)$) -- (0,-3) -- (0,0);
%
 \draw[
	color=blue!50!black, 
	ultra thick, 
	>=stealth,
	decoration={markings, mark=at position 0.5 with {\arrow{>}},
					}, postaction={decorate}
	] 
(0,-3) -- (0,0);
%
\draw[line width=1, color=blue!50!black] (0.2, -1.8) node[line width=0pt] (beta) {{\scriptsize$M$}};
\draw[line width=1, color=green!50!black] (-1.55, -2.85) node[line width=0pt] (beta) {{\scriptsize$A_2$}};
\draw[line width=1, color=green!50!black] (-1.16, -3.14) node[line width=0pt] (beta) {{\scriptsize$A_3$}};
\draw[line width=1, color=green!50!black] (-0.68, -0.5) node[line width=0pt] (beta) {{\scriptsize$A_1$}};
\end{tikzpicture}}
$$
\caption{Surface defects labelled by algebras $A_1,A_2,A_3 \in D_2^{\Cat{C}}$ meeting at a line defect labelled $M$.
The arrangement on the left may be isotoped to the arrangement on the right by flipping the $A_1$-labelled surface clockwise 
around~$M$ from the back to the front. Line defects only know the cyclic ordering of the surface defects adjacent to them; there is no total order.}
\label{fig:intro-linedef}
\end{figure}

The construction of the functor $\zzc$ in \eqref{eq:intro-RTdef} works in two steps. Given a bordism in $\Borddefen{3}(\D^{\Cat{C}})$, we first build a 3-manifold with embedded ribbon tangle which is constructed form the data of the defect strata. 
As in \cite{tft1,ks1012.0911,fsv1203.4568}, a surface defect labelled $A$ is replaced by a network of ribbons labelled $A$. Each $M$-labelled 
line defect is replaced by a ribbon with the same label, 
and joined to the network of $A$-ribbons of an adjacent surface defect via the module action (recall \eqref{eq:D1C}). 
Secondly, the resulting 3-manifold with ribbon tangle is evaluated in the Reshetikhin-Turaev TQFT for $\Cat{C}$. The value of the functor $\zzc$ on objects is defined via a limit construction. We refer to Section~\ref{sec:RTdef} for the details. Our main result then is (cf.\ Theorem~\ref{thm:mainresult}):

\begin{theorem}
\label{theorem:Intro}
Via the construction outlined above, a modular tensor category~$\Cat{C}$ 
gives rise to
a defect TQFT, that is, a symmetric monoidal functor $\zzc$ as in \eqref{eq:intro-RTdef}.
\end{theorem}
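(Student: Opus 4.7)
The plan is to verify the four standard requirements for a symmetric monoidal functor: well-definedness on objects, well-definedness on morphisms, compatibility with composition, and compatibility with disjoint union and the monoidal unit.

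The first step is to set up the assignment concretely. On a defect bordism, one chooses on each 2-stratum labelled by some $A\in D_2^{\Cat{C}}$ a ribbon network whose edges are labelled by~$A$ and whose trivalent vertices are labelled by the multiplication or comultiplication of~$A$. At each 1-stratum labelled by a multi-module with cyclic datum $\bigl((A_1,\eps_1),\dots,(A_n,\eps_n),M\bigr)$, one inserts an $M$-ribbon connected to the adjacent networks via the module action maps, the signs $\eps_i$ being recorded by the dualities of~$\Cat{C}$. 0-strata are treated in a local-model ball, and the twist condition $\theta_X=\id_X$ on $L_0$-labels guarantees that no framing data is needed on isolated 1-strata. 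Applying the Reshetikhin--Turaev functor for~$\Cat{C}$ to the resulting ribbon tangle in a 3-manifold produces a linear map between the RT state spaces of the two boundary objects; the value of $\zzc$ on an object is then defined by the limit construction announced in Section~\ref{sec:RTdef}.

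The core of the proof is independence of the ribbon-network decoration. Two choices of ribbon network on a single 2-stratum are related by a finite sequence of local moves of Pachner type: bigon removal, triangle flip, bubble removal and edge-orientation reversal. Invariance under these moves uses exactly the defining properties of a $\Delta$-separable symmetric Frobenius algebra: associativity and coassociativity give flip invariance, $\mu\circ\Delta=\id_A$ cancels bubbles, the Frobenius relations give the bigon/zigzag identities for the $A$-valued duality, and symmetry of the pairing gives edge-orientation independence. Near a 1-stratum one must further show that changing the ``cut'' which turns the cyclically ordered list $\bigl((A_1,\eps_1),\dots,(A_n,\eps_n)\bigr)$ into a single ordered tensor product action does not alter the evaluation: this is precisely the $C_n$-equivariance condition imposed on the multi-module~$M$ in~\eqref{eq:D1C}, together with the cyclicity of the Frobenius trace on each~$A_i$.

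Once well-definedness on morphisms is established, functoriality follows from the Reshetikhin--Turaev gluing theorem: gluing two defect bordisms along a boundary component yields exactly the concatenation of the underlying decorated 3-manifolds, and the scalar anomaly produced by the RT gluing formula is absorbed by passing to the hat-extension $\Borddefen{3}(\DC)$, exactly as for the undecorated case in~\cite{tur}; when the anomaly of $\Cat{C}$ is trivial this factor is~$1$ and the functor descends to $\Borddefn{3}(\DC)$. Symmetric monoidality is immediate from the corresponding property of the RT functor: disjoint union of decorated bordisms maps to a disjoint union of ribbon 3-manifolds, which the RT functor sends to tensor products, intertwining the symmetric braidings of $\Borddefen{3}(\DC)$ and $\Vect$. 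The main obstacle is the invariance statement near 1-strata where three or more surface defects meet, since there the local moves on each incident 2-stratum must be compatible with the cyclic-equivariance data of the line defect. I would organise this by a local-to-global argument: establish invariance first in the bulk of each 2-stratum, then in a standard tubular neighbourhood of each 1-stratum (modelled on a cylinder with an $M$-ribbon along the axis and $A_i$-networks on the pages meeting it), then in a small ball neighbourhood of each 0-stratum, and finally patch, using that any two decorations can be connected by a finite sequence of such moves each supported in one of these local models.
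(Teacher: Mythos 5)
Your overall strategy — replace each 2-stratum by an $A$-ribbon network dual to a triangulation, attach it to $M$-ribbons on 1-strata by module actions, evaluate with $\zrt$, and pass to a limit over the choices — is the paper's. The paper also separates, as you do, invariance in the bulk of 2-strata (which reduces to Pachner moves and the $\Delta$-separable symmetric Frobenius axioms) from behaviour near 1-strata.

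However, there is a genuine gap in the one place where the specific definition of a cyclic multi-module actually has to earn its keep. You attribute the independence of the ``cut'' at a line defect to ``$C_n$-equivariance together with the cyclicity of the Frobenius trace,'' and you close the argument by saying one can ``patch'' local invariance statements. What this misses is that near the boundary one cannot freely change the decoration; one instead must construct comparison maps $\Psi_{\tau,\tilde\tau,*,\tilde*}^\Sigma$ between the different RT state spaces and verify that they form a directed system (the paper's Lemma~\ref{lem:directed-system}). The verification is where the normalisation $\varphi^{n/k}=\theta_M^{-1}$ in Definition~\ref{def:symmetricmodule} is used: when two comparison cylinders are composed and the total rotation of the $M$-ribbon around the 1-stratum exceeds $2\pi$, a spurious full twist $\theta_M$ appears, and it is cancelled precisely by replacing $\varphi^{j_{12}+j_{23}}$ with $\varphi^{j_{12}+j_{23}-n/k}\circ\theta_M^{-1}$. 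Without this the limit over decorations is not well-defined and the functor does not exist; your proposal never identifies this step or the condition that makes it work.

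Two smaller points. The paper does not encode the ``cut'' informally; it is a separate combinatorial datum, the $*$-decoration of Definition~\ref{def:stardec} (a choice of adjacent 3-stratum per 1-stratum), and the entire comparison-map machinery is formulated relative to it — this is how the cyclic ordering of surface defects around a line defect is handled precisely. Also, you say the signs $\eps_i$ are ``recorded by the dualities of $\Cat{C}$,'' but the paper instead inserts a half-twist of the $A_i$-ribbon at the coupon (equation~\eqref{eq:halftwistAi}), which converts the $A_i$-action into an $A_i^{\mathrm{op}}$-action; these are not interchangeable descriptions without comment, since the construction must remain triangulation-invariant in the $\eps_i=-$ case, and the half-twist is what makes \eqref{eq:halfassed} go through. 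Finally, note that the defect bordism category used here has no 0-strata (Remark~\ref{rem:no-0-strata}), so the ``0-strata treated in a local-model ball'' step in your plan is extraneous at this stage and is only reinstated later via the $D_0$-completion discussed in Remark~\ref{rem:relationToRT}.
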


\begin{remark}\label{rem:intro-whatelse}
\begin{enumerate}
\item
It would clearly be desirably (and natural) to extend the formalism presented here to $D_3$ being all modular tensor categories. Even without knowing the details of this extension, from \cite{fsv1203.4568} it is already clear
that between two 3-strata labelled $\Cat{C}$ and $\Cat{D}$ there can be surface defects if and only if $\Cat{C}$ and $\Cat{D}$ are in the same Witt class, i.\,e.\ if $\Cat{C}\boxtimes\Cat{D}^{\mathrm{rev}}$ is braided equivalent to the Drinfeld centre of some fusion category \cite{Davydov:2010}.
\item
In \cite{fsv1203.4568} a Morita invariant description of surface defects in terms of module categories is given. 
In our formalism, Morita equivalent algebras $A \in \Cat{C}$ do not quite describe the same surface defect. 
For example, a 2-sphere labelled~$A$ evaluates to $\dim(A)$, the categorical dimension of $A$ in $\Cat{C}$ (see Section~\ref{sec:RTdef}), which is not a Morita invariant quantity. This difference between Morita equivalent algebras can be  phrased in terms of invertible surface defects which evaluate to Euler characteristic dependent constants. 
As detailed in \cite[Sect.\,2.5]{CRS1}, one can always complete a defect TQFT with respect to such constants
and use the Euler characteristic to normalise the value of the 2-sphere to obtain a Morita invariant description of surface defects. 
\item 
Line defects in Reshetikhin-Turaev theory with a nonzero number of incident surface defects are an original contribution of the present paper. 
Already in the case of only one label for 3-strata, the presence of such line defects makes possible the appearance of ``foams'' in Reshetikhin-Turaev theory, where many 2-strata may meet at 1-strata such that the union of all 1- and 2-strata does not form a manifold. 
Such foams are a crucial ingredient of the 3-dimensional orbifold construction of \cite{CRS1, CRS3}. 
Moreover, foams feature prominently in some constructions of homological link invariants \cite{KR0404189, msv0708.2228, QR1405.5920}, where all 3-strata are labelled ``trivially'' while 2-strata are labelled by certain Landau-Ginzburg models. 
In view of the relation to categorified quantum groups \cite{MY1306.6242}, it would be interesting to systematically study foams also in Reshetikhin-Turaev theory. 
\item
In the description of the set of defect data $\DC$ we did not include a label set $D_0^{\Cat{C}}$ for 0-strata. 
The reason for this is twofold. 
Firstly, their direct description is even more involved than that of line defects, see~\cite[Def.\,2.4]{CRS1} for the general case. 
Secondly, for a defect TQFT without labels for 0-strata, the set of defect data~$\D$ can always be canonically extended to a new set $\D^\bullet$ which now includes $D_0$. 
This is done by cutting out small balls and taking certain invariant vectors in
the corresponding state space of the TQFT, see~\cite[Sect.\,2.4]{CRS1} for details. 
\item 
The original Reshetikhin-Turaev construction assigns vector spaces to surfaces with marked points and linear maps to bordisms with embedded ribbons and coupons. 
One can make use of the extension to 0-strata mentioned in part (iv) to describe coupons, and in Remark~\ref{rem:relationToRT}(ii) we detail how the original Reshetikhin-Turaev construction embeds into the theory defined in the present paper. 
\item
The functor $\zzc$ also assigns state spaces to surfaces with marked points and decorated line defects, which correspond to boundaries of 2-strata in bordisms. 
Similar state spaces had previously been discussed in relation to topological phases of matter \cite{AiKongZheng}, using the language of factorisation homology. 
\item
Consider embeddings $\iota$ of an oriented surface~$\Sigma$ into a closed 3-manifold~$N$. 
For $\Sigma = S^2$ it is easy to convince oneself (as we will do in Section~\ref{sec:RTdef}) that any two embeddings~$\iota$ produce the same invariant $\zzc$, since one can collapse the network of algebra ribbons 
on any point of the sphere, cf.\ Section~\ref{sec:RTdef}. 
Some non-isotopic embeddings of surfaces of non-zero genus can however be distinguished by our invariant~$\zzc$. 
We illustrate this in Section~\ref{sec:RTdef} for $\Sigma = S^1 \times S^1$, $N = S^1 \times S^1 \times S^1$ and $\Cat{C}$ the modular tensor category obtained from the affine Lie algebra $\widehat{\mathfrak{sl}}(2)_k$.
\end{enumerate}
\end{remark}

This paper is organised as follows. 
In Section~\ref{sec:modules} we introduce multi-modules, give an action of the cyclic group on the category of multi-modules and discuss equivariance with respect to this action.
In Section~\ref{sec:defectTQFTs} we briefly review the category of 3-dimensional stratified bordisms and we define the sets of defect data which can be used to label the strata.
For a given modular tensor category~$\Cat{C}$,
in Section~\ref{sec:MTCRT} we recall bordisms with embedded $\Cat{C}$-coloured ribbon tangles and the formulation of Reshetikhin-Turaev TQFT for~$\Cat{C}$.
Finally, Section~\ref{sec:RTdef} gives our construction of Reshetikhin-Turaev theory with surface defects, and the proof of Theorem~\ref{theorem:Intro}, see Theorem~\ref{thm:mainresult}.

\subsubsection*{Acknowledgements} 

We would like to thank Alexei Davydov and J\"urgen Fuchs for helpful comments.
The work of N.\,C.~is partially supported by a grant from the Simons Foundation. 
N.\,C.~and G.\,S.~are partially supported by the stand-alone project P\,27513-N27 of the Austrian Science Fund. 
The authors acknowledge support by the Research Training Group 1670 of the German Research Foundation.

\section{Multi-modules with cyclic symmetry}
\label{sec:modules}

In this section we present our conventions for ribbon categories~$\Cat{C}$, and then introduce the notion of ``cyclically symmetric multi-modules'' over algebras in~$\Cat{C}$ that we will need in Section~\ref{sec:RTdef}.

\medskip

Let $\Cat{C}$ be a monoidal category which we will assume to be strict to simplify notation.
Recall that a \textsl{braiding} on~$\Cat{C}$ consists of a family of natural isomorphisms $c_{X,Y} \colon X \otimes Y \to Y \otimes X$ for all $X,Y \in \Cat{C}$ such that the hexagon identities and unit axioms are satisfied. 
In string diagrammatic language (read from bottom to top) we present the braiding and its inverse as 
\be
c_{X,Y}
= 
\begin{tikzpicture}[very thick,scale=0.5,color=blue!50!black, baseline]
\draw[color=blue!50!black] (-1,-1) node[below] (A1) {{\scriptsize$X$}};
\draw[color=blue!50!black] (1,-1) node[below] (A2) {{\scriptsize$Y$}};
\draw[color=blue!50!black] (-1,1) node[above] (B1) {{\scriptsize$Y$}};
\draw[color=blue!50!black] (1,1) node[above] (B2) {{\scriptsize$X$}};
\draw[color=blue!50!black] (A2) -- (B1);
\draw[color=white, line width=4pt] (A1) -- (B2);
\draw[color=blue!50!black] (A1) -- (B2);
\end{tikzpicture} 
\, , \quad
c^{-1}_{X,Y}
= 
\begin{tikzpicture}[very thick,scale=0.5,color=blue!50!black, baseline]
\draw[color=blue!50!black] (-1,-1) node[below] (A1) {{\scriptsize$Y$}};
\draw[color=blue!50!black] (1,-1) node[below] (A2) {{\scriptsize$X$}};
\draw[color=blue!50!black] (-1,1) node[above] (B1) {{\scriptsize$X$}};
\draw[color=blue!50!black] (1,1) node[above] (B2) {{\scriptsize$Y$}};
\draw[color=blue!50!black] (A1) -- (B2);
\draw[color=white, line width=4pt] (A2) -- (B1);
\draw[color=blue!50!black] (A2) -- (B1);
\end{tikzpicture} 
\, . 
\ee

A \textsl{twist} for a braided category~$\Cat{C}$ consists of a natural family of morphisms $\theta_{X}\colon X \to X$ for all $X \in \Cat{C}$ such that 
\be
\label{eq:twist}
\theta_{X \otimes Y} 
=
\begin{tikzpicture}[very thick,scale=0.75,color=blue!50!black, baseline=-0.2cm]
\draw[color=blue!50!black] (-1.2,0.8) node[above] (X) {{\scriptsize$X\otimes Y$}};
\draw[color=blue!50!black] (-1.2,-1.4) node[below] (X) {{\scriptsize$X\otimes Y$}};
\draw[color=blue!50!black] (-1.2,0.8) -- (-1.2,-1.4); 
\fill[color=blue!50!black] (-1.2,-0.2) circle (2.9pt) node[left] (meet2) {{\scriptsize$\theta_{X\otimes Y}$}};
\end{tikzpicture} 
= 
\begin{tikzpicture}[very thick,scale=0.75,color=blue!50!black, baseline=-0.2cm]
\draw[color=blue!50!black] (-1.2,0.8) node[above] (X) {{\scriptsize$X$}};
\draw[color=blue!50!black] (-0.5,0.8) node[above] (X) {{\scriptsize$Y$}};
\draw[color=blue!50!black] (-1.2,-1.4) node[below] (X) {{\scriptsize$X$}};
\draw[color=blue!50!black] (-0.5,-1.4) node[below] (X) {{\scriptsize$Y$}};
\draw[color=blue!50!black] (-1.2,0.8) -- (-1.2,0.4); 
\draw[color=blue!50!black] (-0.5,0.8) -- (-0.5,0.4); 
\draw[color=blue!50!black] (-1.2,0.4) .. controls +(0,-0.25) and +(0,0.25) .. (-0.5,-0.3);
\draw[color=white, line width=4pt] (-0.5,0.4) .. controls +(0,-0.25) and +(0,0.25) .. (-1.2,-0.3);
\draw[color=blue!50!black] (-0.5,0.4) .. controls +(0,-0.25) and +(0,0.25) .. (-1.2,-0.3);
\draw[color=blue!50!black] (-1.2,-0.3) .. controls +(0,-0.25) and +(0,0.25) .. (-0.5,-1);
\draw[color=white, line width=4pt] (-0.5,-0.3) .. controls +(0,-0.25) and +(0,0.25) .. (-1.2,-1);
\draw[color=blue!50!black] (-0.5,-0.3) .. controls +(0,-0.25) and +(0,0.25) .. (-1.2,-1);
\draw[color=blue!50!black] (-1.2,-1) -- (-1.2,-1.4); 
\draw[color=blue!50!black] (-0.5,-1) -- (-0.5,-1.4); 
\fill[color=blue!50!black] (-1.2,-1.1) circle (2.9pt) node[left] (meet2) {{\scriptsize$\theta_X$}};
\fill[color=blue!50!black] (-0.5,-1.1) circle (2.9pt) node[right] (meet2) {{\scriptsize$\theta_Y$}};
\end{tikzpicture} 
= 
c_{Y,X} \circ c_{X,Y} \circ (\theta_{X} \otimes \theta_{Y}) 
\, . 
\ee
Finally, a \textsl{ribbon category} is a monoidal category~$\Cat{C}$ which has a left and right dual $X^\vee$ for every object~$X$, a braiding~$c$ and a twist~$\theta$ such that 
$\theta_{X^{\vee}}=(\theta_{X})^{\vee}$ for all $X \in \Cat{C}$. 
For more details on ribbon categories, see e.\,g.\ \cite[Sect.\,2]{baki}.

\medskip

For the remainder of this section we fix a ribbon category~$\mathcal C$ to study algebras and their modules in $\mathcal C$. 
For an algebra $(A,\mu,\eta)$ with underlying object $A\in \Cat{C}$, multiplication $\mu \colon A \otimes A \to A$ and unit $\eta \colon \one \to A$, and for an $A$-module $(M,\rho)$ with object $M \in \Cat{C}$ and action $\rho \colon A\otimes M \to M$ we use the string diagram notation
\be
\begin{tikzpicture}[very thick,scale=0.75,color=blue!50!black, baseline]
\draw[color=green!50!black] (-0.85,0.2) -- (-0.85,1); 
\fill[color=green!50!black] (-0.85,0.2) circle (2.5pt) node (mult1) {};
\draw[color=green!50!black] (-1.2,-1) .. controls +(0,0.75) and +(-0.15,0) .. (-0.85,0.2);
\draw[color=green!50!black] (-0.5,-1) .. controls +(0,0.75) and +(0.15,0) .. (-0.85,0.2);
\draw[color=green!50!black] (-0.85,1) node[above] (A1) {{\scriptsize$A$}};
\draw[color=green!50!black] (-0.5,-1) node[below] (A1) {{\scriptsize$A$}};
\draw[color=green!50!black] (-1.2,-1) node[below] (A1) {{\scriptsize$A$}};
\draw[color=green!50!black] (-0.55,0.4) node (A1) {{\scriptsize$\mu$}};
\end{tikzpicture} 
\, , \qquad
\begin{tikzpicture}[very thick,scale=0.75,color=blue!50!black, baseline]
\draw[color=green!50!black] (0,-0.7) node[Odot] (D1) {}; 
\draw[color=green!50!black] (D1) -- (0,1); 
\draw[color=green!50!black] (0,1) node[above] (A1) {{\scriptsize$A$}};
\draw[color=green!50!black] (-0.25,-0.9) node (A1) {{\scriptsize$\eta$}};
\end{tikzpicture} 
\, , \qquad
\begin{tikzpicture}[very thick,scale=0.75,color=blue!50!black, baseline]
\draw (0,-1) node[below] (X) {};
\draw[color=green!50!black] (-1,-1) node[below] (A1) {};
\draw[color=green!50!black] (-0.5,-1) node[below] (A2) {};
\draw (0,1) node[right] (Xu) {};
\draw (0,-1) -- (0,1); 
\fill[color=blue!50!black] (0,0.5) circle (2.9pt) node[right] (meet2) {{\scriptsize$\rho$}};
\draw[color=green!50!black] (-1,-1) .. controls +(0,0.5) and +(-0.5,-0.5) .. (0,0.5);
\draw (0,1) node[above] (X) {{\scriptsize$M$}};
\draw (0,-1) node[below] (X) {{\scriptsize$M$}};
\draw[color=green!50!black] (-1,-1) node[below] (A1) {{\scriptsize$A$}};
\end{tikzpicture} 
\, . 
\ee
For an algebra 
$A \equiv (A,\mu,\eta)$ 
we denote by $A^{\text{op}}$ the algebra $(A, \mu^{\text{op}},\eta)$ with the convention 
\be
\label{eq:muop}
\mu^{\text{op}} := \mu \circ c_{A,A} = 
\begin{tikzpicture}[very thick,scale=0.75,color=blue!50!black, baseline]
\draw[color=green!50!black] (-0.85,0.2) -- (-0.85,1); 
\fill[color=green!50!black] (-0.85,0.2) circle (2.5pt) node (mult1) {};
\draw[color=green!50!black] (-1.2,-0.3) .. controls +(0,0.25) and +(0,0) .. (-0.85,0.2);
\draw[color=green!50!black] (-1.2,-0.3) .. controls +(0,-0.25) and +(0,0.25) .. (-0.5,-1);
\draw[color=green!50!black] (-0.5,-0.3) .. controls +(0,0.25) and +(0,0) .. (-0.85,0.2);
\draw[color=white, line width=4pt] (-0.5,-0.3) .. controls +(0,-0.25) and +(0,0.25) .. (-1.2,-1);
\draw[color=green!50!black] (-0.5,-0.3) .. controls +(0,-0.25) and +(0,0.25) .. (-1.2,-1);
\end{tikzpicture} 
\ee
for its multiplication. 

Recall that if $(A_1,\mu_1,\eta_1)$ and $(A_2,\mu_2,\eta_2)$ are algebras in~$\mathcal C$ then their tensor product $A_1 \otimes A_2$ also carries an algebra structure with multiplication 
\be
\mu_{A_1 \otimes A_2} = ( \mu_1 \otimes \mu_2 ) \circ ( 1_{A_1} \otimes c_{A_2,A_1} \otimes 1_{A_2}) 
= 
\begin{tikzpicture}[very thick,scale=0.75,color=blue!50!black, baseline]
\draw[color=green!50!black] (-1,-1) node[below] (A1) {{\scriptsize$A_1$}};
\draw[color=green!50!black] (0,-1) node[below] (A2) {{\scriptsize$A_2$}};
\draw[color=green!50!black] (1,-1) node[below] (A1r) {{\scriptsize$A_1$}};
\draw[color=green!50!black] (2,-1) node[below] (A2r) {{\scriptsize$A_2$}};
\draw[color=green!50!black] (0,1) node[above] (A1up) {{\scriptsize$A_1$}};
\draw[color=green!50!black] (1,1) node[above] (A2up) {{\scriptsize$A_2$}};
\fill[color=green!50!black] (0,0) circle (2.5pt) node (mult1) {};
\fill[color=green!50!black] (1,0) circle (2.5pt) node (mult2) {};
\draw[color=green!50!black] (A1) -- (0,0);
\draw[color=green!50!black] (A1r) -- (0,0);
\draw[color=white, line width=4pt] (A2) -- (1,0);
\draw[color=green!50!black] (A2) -- (1,0);
\draw[color=green!50!black] (A2r) -- (1,0);
\fill[color=green!50!black] (1,0) circle (2.5pt) node (mult2) {};
\draw[color=green!50!black] (0,0) -- (A1up);
\draw[color=green!50!black] (1,0) -- (A2up);
\end{tikzpicture} 
\ee
and unit $\eta_{A_1 \otimes A_2} = \eta_1 \otimes \eta_2$.
The use of the braiding instead of the inverse braiding is again a convention.

We are interested in modules over iterated tensor products $A_1\otimes \dots \otimes A_n$ of algebras $A_{i}$. 
It will be convenient to describe such modules in terms of compatible module structures over the individual factors $A_i$. 
The basic relation is as follows. 

\begin{lemma}
\label{lem:A1A2}
Let $A_1, A_2 \in \mathcal C$ be algebras and $M\in \mathcal C$. 
There is a 1-to-1 correspondence between 
\begin{enumerate}
\item
$A_1 \otimes A_2$-module structures on~$M$, and 
\item 
pairs of $A_1$- and $A_2$ module structures on~$M$ such that one of the following equivalent conditions holds: 
\be
\label{eq:A1A2comp}
\begin{tikzpicture}[very thick,scale=0.75,color=blue!50!black, baseline]
\draw (0,-1) node[below] (X) {{\scriptsize$M$}};
\draw[color=green!50!black] (-0.5,-1) node[below] (A1) {{\scriptsize$A_2$}};
\draw[color=green!50!black] (-1,-1) node[below] (A2) {{\scriptsize$A_1$}};
\draw (0,1) node[right] (Xu) {};
\draw[color=green!50!black] (A2) .. controls +(0,0.5) and +(-0.25,-0.25) .. (0,-0.25);
\draw[color=white, line width=4pt] (A1) .. controls +(0,0.5) and +(-0.5,-0.5) .. (0,0.6);
\draw[color=green!50!black] (A1) .. controls +(0,0.5) and +(-0.5,-0.5) .. (0,0.6);
\draw (0,-1) -- (0,1); 
\fill[color=blue!50!black] (0,-0.25) circle (2.9pt) node (meet) {};
\fill[color=blue!50!black] (0,0.6) circle (2.9pt) node (meet2) {};
\end{tikzpicture} 
= 
\begin{tikzpicture}[very thick,scale=0.75,color=blue!50!black, baseline]
\draw (0,-1) node[below] (X) {{\scriptsize$M$}};
\draw[color=green!50!black] (-0.5,-1) node[below] (X) {{\scriptsize$A_2$}};
\draw[color=green!50!black] (-1,-1) node[below] (X) {{\scriptsize$A_1$}};
\draw (0,1) node[right] (Xu) {};
\draw (0,-1) -- (0,1); 
\draw[color=green!50!black] (-0.5,-1) .. controls +(0,0.25) and +(-0.25,-0.25) .. (0,-0.25);
\draw[color=green!50!black] (-1,-1) .. controls +(0,0.5) and +(-0.5,-0.5) .. (0,0.6);
\fill[color=blue!50!black] (0,-0.25) circle (2.9pt) node (meet) {};
\fill[color=blue!50!black] (0,0.6) circle (2.9pt) node (meet2) {};
\end{tikzpicture} 
\, , \qquad 
\begin{tikzpicture}[very thick,scale=0.75,color=blue!50!black, baseline]
\draw (0,-1) node[below] (X) {{\scriptsize$M$}};
\draw[color=green!50!black] (-0.5,-1) node[below] (A1) {{\scriptsize$A_1$}};
\draw[color=green!50!black] (-1,-1) node[below] (A2) {{\scriptsize$A_2$}};
\draw (0,1) node[right] (Xu) {};
\draw[color=green!50!black] (A1) .. controls +(0,0.5) and +(-0.5,-0.5) .. (0,0.6);
\draw[color=white, line width=4pt] (A2) .. controls +(0,0.5) and +(-0.25,-0.25) .. (0,-0.25);
\draw[color=green!50!black] (A2) .. controls +(0,0.5) and +(-0.25,-0.25) .. (0,-0.25);
\draw (0,-1) -- (0,1); 
\fill[color=blue!50!black] (0,-0.25) circle (2.9pt) node (meet) {};
\fill[color=blue!50!black] (0,0.6) circle (2.9pt) node (meet2) {};
\end{tikzpicture} 
= 
\begin{tikzpicture}[very thick,scale=0.75,color=blue!50!black, baseline]
\draw (0,-1) node[below] (X) {{\scriptsize$M$}};
\draw[color=green!50!black] (-0.5,-1) node[below] (X) {{\scriptsize$A_1$}};
\draw[color=green!50!black] (-1,-1) node[below] (X) {{\scriptsize$A_2$}};
\draw (0,1) node[right] (Xu) {};
\draw (0,-1) -- (0,1); 
\draw[color=green!50!black] (-0.5,-1) .. controls +(0,0.25) and +(-0.25,-0.25) .. (0,-0.25);
\draw[color=green!50!black] (-1,-1) .. controls +(0,0.5) and +(-0.5,-0.5) .. (0,0.6);
\fill[color=blue!50!black] (0,-0.25) circle (2.9pt) node (meet) {};
\fill[color=blue!50!black] (0,0.6) circle (2.9pt) node (meet2) {};
\end{tikzpicture} 
\, .
\ee
\end{enumerate}
\end{lemma}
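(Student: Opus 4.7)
The correspondence is defined by the obvious formulas: given an $A_1\otimes A_2$-module structure $\rho\colon A_1\otimes A_2\otimes M\to M$, produce the individual actions by inserting units,
\bes
\rho_1 := \rho\circ(\id_{A_1}\otimes\eta_2\otimes\id_M)\, ,\qquad
\rho_2 := \rho\circ(\eta_1\otimes\id_{A_2}\otimes\id_M)\, ,
\ees
and conversely, from compatible actions $(\rho_1,\rho_2)$, assemble $\rho := \rho_1\circ(\id_{A_1}\otimes\rho_2)$. That the $\rho_i$ are $A_i$-module structures whenever $\rho$ is an $A_1\otimes A_2$-module structure follows routinely from associativity and unitality of $\rho$ combined with unitality of $\mu_1,\mu_2$.

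The key observation is the identity
\bes
\mu_{A_1\otimes A_2}\circ\bigl((\eta_1\otimes\id_{A_2})\otimes(\id_{A_1}\otimes\eta_2)\bigr) = c_{A_2,A_1}\colon A_2\otimes A_1\lra A_1\otimes A_2\, ,
\ees
which reads off directly from the string-diagrammatic definition of $\mu_{A_1\otimes A_2}$ together with unitality of $\mu_1,\mu_2$. For the implication (i)$\Rightarrow$(ii), applying associativity of $\rho$ to the input coming from $(\eta_1\otimes\id_{A_2})\otimes(\id_{A_1}\otimes\eta_2)\otimes\id_M$ and using this identity yields one of the forms of the compatibility condition in \eqref{eq:A1A2comp}; the other form is obtained by pre-composition with $c_{A_2,A_1}^{-1}\otimes\id_M$, and since this manipulation is reversible the two forms of \eqref{eq:A1A2comp} are equivalent, as claimed. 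For the converse implication (ii)$\Rightarrow$(i), unitality of $\rho = \rho_1\circ(\id_{A_1}\otimes\rho_2)$ is immediate, and associativity
\bes
\rho\circ(\mu_{A_1\otimes A_2}\otimes\id_M) = \rho\circ(\id_{A_1\otimes A_2}\otimes\rho)
\ees
is the main calculation: expanding both sides using the definitions of $\rho$ and of $\mu_{A_1\otimes A_2}$, the two sides differ precisely by the interchange of an inner $A_2$-action past a subsequent $A_1$-action together with the braiding $c_{A_2,A_1}$ arising inside $\mu_{A_1\otimes A_2}$, and a single application of \eqref{eq:A1A2comp} combined with the individual associativities of $\rho_1,\rho_2$ identifies them.

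The two constructions are seen to be mutually inverse by a direct use of unitality: recovering $\rho_1'$ from the assembled $\rho$ gives $\rho_1' = \rho_1\circ(\id_{A_1}\otimes\rho_2\circ(\eta_2\otimes\id_M)) = \rho_1$, and similarly $\rho_2' = \rho_2$; the reverse round-trip $\rho\mapsto(\rho_1,\rho_2)\mapsto\rho$ is handled analogously, reducing to a single application of unitality of $\rho_2$. The main obstacle is thus entirely contained in the associativity verification, which requires only careful diagrammatic bookkeeping once the key identity involving $\mu_{A_1\otimes A_2}$ and the braiding $c_{A_2,A_1}$ is in hand.
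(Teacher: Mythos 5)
Your proposal is correct and follows essentially the same route as the paper's proof: the correspondence is defined by the identical formulas (insert units to extract $\rho_1,\rho_2$; assemble $\rho=\rho_1\circ(\id_{A_1}\otimes\rho_2)$), and the verifications reduce to the same associativity/unitality manipulations that the paper carries out in string-diagram form. Your isolation of the identity $\mu_{A_1\otimes A_2}\circ((\eta_1\otimes\id_{A_2})\otimes(\id_{A_1}\otimes\eta_2)) = c_{A_2,A_1}$ as the crux is a helpful way to package what is implicit in the paper's diagrams, though note that to pass from the associativity consequence $\rho\circ(c_{A_2,A_1}\otimes\id_M)=\rho_2\circ(\id_{A_2}\otimes\rho_1)$ to the stated compatibility condition you also need the round-trip identity $\rho=\rho_1\circ(\id_{A_1}\otimes\rho_2)$ (the paper's \eqref{eq:A1A2-resolve}), so that step should logically precede rather than follow the derivation of \eqref{eq:A1A2comp}.
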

One can remember the conditions in \eqref{eq:A1A2comp} as ``$A_2$ always passes over $A_1$''.

\begin{proof}
Using the braiding and inverse braiding, respectively, it follows immediately that the two conditions in \eqref{eq:A1A2comp} are equivalent. 

(i) $\Rightarrow$ (ii): 
For an $A_1 \otimes A_2$-module structure  
\be
\label{eq:A1A2act}
\begin{tikzpicture}[very thick,scale=0.75,color=blue!50!black, baseline]
\draw (0,-1) node[below] (X) {{\scriptsize$M$}};
\draw[color=green!50!black] (-1,-1) node[below] (A1) {{\scriptsize$A_1$}};
\draw[color=green!50!black] (-0.5,-1) node[below] (A2) {{\scriptsize$A_2$}};
\draw (0,1) node[right] (Xu) {};
\draw (0,-1) -- (0,1); 
\draw[color=green!50!black] (-1,-1) .. controls +(0,0.5) and +(-0.5,-0.5) .. (0,0.55);
\draw[color=green!50!black] (-0.5,-1) .. controls +(0,0.25) and +(-0.25,-0.25) .. (0,0.45);
\fill[color=blue!50!black] (0,0.5) circle (4pt) node (meet2) {};
\end{tikzpicture} 
\colon A_1 \otimes A_2 \otimes M \lra M
\ee
we set 
\be
\label{eq:A1A2act2}
\begin{tikzpicture}[very thick,scale=0.75,color=blue!50!black, baseline]
\draw (0,-1) node[below] (X) {};
\draw[color=green!50!black] (-1,-1) node[below] (A1) {};
\draw[color=green!50!black] (-0.5,-1) node[below] (A2) {};
\draw (0,1) node[right] (Xu) {};
\draw (0,-1) -- (0,1); 
\draw[color=green!50!black] (-1,-1) .. controls +(0,0.5) and +(-0.5,-0.5) .. (0,0.5);
\draw (0,-1) node[below] (X) {{\scriptsize$M$}};
\draw[color=green!50!black] (-1,-1) node[below] (A1) {{\scriptsize$A_1$}};
\fill[color=blue!50!black] (0,0.5) circle (2.9pt) node (meet2) {};
\end{tikzpicture} 
\stackrel{\textrm{def}}{=}
\begin{tikzpicture}[very thick,scale=0.75,color=blue!50!black, baseline]
\draw (0,-1) node[below] (X) {};
\draw[color=green!50!black] (-1,-1) node[below] (A1) {};
\draw[color=green!50!black] (-0.5,-1) node[below] (A2) {};
\draw (0,1) node[right] (Xu) {};
\draw (0,-1) -- (0,1); 
\draw[color=green!50!black] (-1,-1) .. controls +(0,0.5) and +(-0.5,-0.5) .. (0,0.55);
\draw[color=green!50!black] (-0.35,-0.5) node[Odot] (D) {}; 
\draw[color=green!50!black] (D) .. controls +(0,0.25) and +(-0.25,-0.25) .. (0,0.45);
\draw (0,-1) node[below] (X) {{\scriptsize$M$}};
\draw[color=green!50!black] (-1,-1) node[below] (A1) {{\scriptsize$A_1$}};
\fill[color=blue!50!black] (0,0.5) circle (4pt) node (meet2) {};
\end{tikzpicture} 
\, ,
\quad 
\begin{tikzpicture}[very thick,scale=0.75,color=blue!50!black, baseline]
\draw (0,-1) node[below] (X) {};
\draw[color=green!50!black] (-1,-1) node[below] (A1) {};
\draw[color=green!50!black] (-0.5,-1) node[below] (A2) {};
\draw (0,1) node[right] (Xu) {};
\draw (0,-1) -- (0,1); 
\draw[color=green!50!black] (-1,-1) .. controls +(0,0.5) and +(-0.5,-0.5) .. (0,0.5);
\draw (0,-1) node[below] (X) {{\scriptsize$M$}};
\draw[color=green!50!black] (-1,-1) node[below] (A1) {{\scriptsize$A_2$}};
\fill[color=blue!50!black] (0,0.5) circle (2.9pt) node (meet2) {};
\end{tikzpicture} 
\stackrel{\textrm{def}}{=}
\begin{tikzpicture}[very thick,scale=0.75,color=blue!50!black, baseline]
\draw (0,-1) node[below] (X) {{\scriptsize$M$}};
\draw[color=green!50!black] (-0.5,-1) node[below] (A2) {{\scriptsize$A_2$}};
\draw (0,1) node[right] (Xu) {};
\draw (0,-1) -- (0,1); 
\draw[color=green!50!black] (-0.75,-0.5) node[Odot] (D) {}; 
\draw[color=green!50!black] (D) .. controls +(0,0.25) and +(-0.5,-0.5) .. (0,0.55);
\draw[color=green!50!black] (-0.5,-1) .. controls +(0,0.25) and +(-0.25,-0.25) .. (0,0.45);
\fill[color=blue!50!black] (0,0.5) circle (4pt) node (meet2) {};
\end{tikzpicture} 
\, . 
\ee
These are $A_1$- and $A_2$-module structures, respectively, as e.\,g. 
\be
\begin{tikzpicture}[very thick,scale=0.75,color=blue!50!black, baseline]
\draw (0,-1) node[below] (X) {{\scriptsize$M$}};
\draw[color=green!50!black] (-0.5,-1) node[below] (X) {{\scriptsize$A_1$}};
\draw[color=green!50!black] (-1,-1) node[below] (X) {{\scriptsize$A_1$}};
\draw (0,1) node[right] (Xu) {};
\draw (0,-1) -- (0,1); 
\draw[color=green!50!black] (-0.5,-1) .. controls +(0,0.25) and +(-0.25,-0.25) .. (0,-0.25);
\draw[color=green!50!black] (-1,-1) .. controls +(0,0.5) and +(-0.5,-0.5) .. (0,0.6);
\fill[color=blue!50!black] (0,-0.25) circle (2.9pt) node (meet) {};
\fill[color=blue!50!black] (0,0.6) circle (2.9pt) node (meet2) {};
\end{tikzpicture} 
= 
\begin{tikzpicture}[very thick,scale=0.75,color=blue!50!black, baseline]
\draw (0,1) node[right] (Xu) {};
\draw (0,-1) -- (0,1); 
\draw[color=green!50!black] (-0.5,-1) .. controls +(0,0.25) and +(-0.25,-0.25) .. (0,-0.2);
\draw[color=green!50!black] (-0.2,-0.8) node[Odot] (D1) {}; 
\draw[color=green!50!black] (D1) .. controls +(0.1,0.3) and +(-0.2,-0.2) .. (0,-0.3);
\draw[color=green!50!black] (-1.25,-1) .. controls +(0,0.5) and +(-0.5,-0.5) .. (0,0.65);
\draw[color=green!50!black] (-0.4,-0.1) node[Odot] (D2) {};
\draw[color=green!50!black] (D2) .. controls +(0.1,0.3) and +(-0.2,-0.2) .. (0,0.55);
\fill[color=blue!50!black] (0,-0.25) circle (4pt) node (meet) {};
\fill[color=blue!50!black] (0,0.6) circle (4pt) node (meet2) {};
\end{tikzpicture} 
= 
\begin{tikzpicture}[very thick,scale=0.75,color=blue!50!black, baseline]
\draw (0,1) node[right] (Xu) {};
\draw (0,-1) -- (0,1); 
\fill[color=green!50!black] (-0.9,-0.3) circle (2.5pt) node (mult1) {};
\fill[color=green!50!black] (-0.45,-0.3) circle (2.5pt) node (mult2) {};
\draw[color=green!50!black] (-0.5,-1) .. controls +(0,0.25) and +(0.25,-0.25) .. (-0.9,-0.3);
\draw[color=green!50!black] (-1.25,-1) .. controls +(0,0.5) and +(0,0) .. (-0.9,-0.3);
\draw[color=green!50!black] (-0.9,-0.3) .. controls +(0,0.5) and +(0,0) .. (0,0.6);
\draw[color=green!50!black] (-0.75,-0.8) node[Odot] (D2) {}; 
\draw[color=white, line width=4pt] (D2) .. controls +(0.1,0.2) and +(-0.2,-0.2) .. (-0.45,-0.3);
\fill[color=green!50!black] (-0.45,-0.3) circle (2.5pt) node (mult2) {};
\draw[color=green!50!black] (D2) .. controls +(0.1,0.2) and +(-0.2,-0.2) .. (-0.45,-0.3);
\draw[color=green!50!black] (-0.2,-0.8) node[Odot] (D3) {}; 
\draw[color=green!50!black] (D3) .. controls +(-0.1,0.2) and +(0.2,-0.2) .. (-0.45,-0.3);
\draw[color=green!50!black] (-0.45,-0.3) .. controls +(0,0.5) and +(0,0) .. (0,0.55);
\fill[color=blue!50!black] (0,0.6) circle (4pt) node (meet2) {};
\end{tikzpicture} 
= 
\begin{tikzpicture}[very thick,scale=0.75,color=blue!50!black, baseline]
\draw (0,1) node[right] (Xu) {};
\draw (0,-1) -- (0,1); 
\fill[color=green!50!black] (-0.9,-0.3) circle (2.5pt) node (mult1) {};
\draw[color=green!50!black] (-0.45,-0.3) node[Odot] (D2) {}; 
\draw[color=green!50!black] (-0.5,-1) .. controls +(0,0.25) and +(0.25,-0.25) .. (-0.9,-0.3);
\draw[color=green!50!black] (-1.25,-1) .. controls +(0,0.5) and +(0,0) .. (-0.9,-0.3);
\draw[color=green!50!black] (-0.9,-0.3) .. controls +(0,0.5) and +(0,0) .. (0,0.6);
\draw[color=green!50!black] (-0.45,-0.25) .. controls +(0,0.5) and +(0,0) .. (0,0.55);
\fill[color=blue!50!black] (0,0.6) circle (4pt) node (meet2) {};

\end{tikzpicture} 
= 
\begin{tikzpicture}[very thick,scale=0.75,color=blue!50!black, baseline]
\draw (0,1) node[right] (Xu) {};
\draw (0,-1) -- (0,1); 
\draw[color=green!50!black] (-0.5,-1) .. controls +(0,0.25) and +(0.25,-0.25) .. (-0.9,-0.3);
\draw[color=green!50!black] (-1.25,-1) .. controls +(0,0.5) and +(0,0) .. (-0.9,-0.3);
\draw[color=green!50!black] (-0.9,-0.3) .. controls +(0,0.5) and +(0,0) .. (0,0.6);
\draw (0,-1) node[below] (X) {{\scriptsize$M$}};
\draw[color=green!50!black] (-0.5,-1) node[below] (X) {{\scriptsize$A_1$}};
\draw[color=green!50!black] (-1.2,-1) node[below] (X) {{\scriptsize$A_1$}};
\fill[color=green!50!black] (-0.9,-0.3) circle (2.5pt) node (mult1) {};
\fill[color=blue!50!black] (0,0.6) circle (2.9pt) node (meet2) {};
\end{tikzpicture} 
\, . 
\ee
As direct consequences of the definition we get
\be\label{eq:A1A2-resolve}
\begin{tikzpicture}[very thick,scale=0.75,color=blue!50!black, baseline]
\draw (0,-1) node[below] (X) {{\scriptsize$M$}};
\draw[color=green!50!black] (-0.5,-1) node[below] (X) {{\scriptsize$A_2$}};
\draw[color=green!50!black] (-1,-1) node[below] (X) {{\scriptsize$A_1$}};
\draw (0,1) node[right] (Xu) {};
\draw (0,-1) -- (0,1); 
\draw[color=green!50!black] (-0.5,-1) .. controls +(0,0.25) and +(-0.25,-0.25) .. (0,-0.25);
\draw[color=green!50!black] (-1,-1) .. controls +(0,0.5) and +(-0.5,-0.5) .. (0,0.6);
\fill[color=blue!50!black] (0,-0.25) circle (2.9pt) node (meet) {};
\fill[color=blue!50!black] (0,0.6) circle (2.9pt) node (meet2) {};
\end{tikzpicture} 
= 
\begin{tikzpicture}[very thick,scale=0.75,color=blue!50!black, baseline]
\draw (0,1) node[right] (Xu) {};
\draw (0,-1) -- (0,1); 
\draw[color=green!50!black] (-0.5,-1) .. controls +(0,0.25) and +(-0.25,-0.25) .. (0,-0.3);
\draw[color=green!50!black] (-0.7,-0.8) node[Odot] (D1) {}; 
\draw[color=green!50!black] (D1) .. controls +(0.1,0.3) and +(-0.2,-0.2) .. (0,-0.2);
\draw[color=green!50!black] (-1.25,-1) .. controls +(0,0.5) and +(-0.5,-0.5) .. (0,0.65);
\draw[color=green!50!black] (-0.4,-0.1) node[Odot] (D2) {}; 
\draw[color=green!50!black] (D2) .. controls +(0.1,0.3) and +(-0.2,-0.2) .. (0,0.55);
\fill[color=blue!50!black] (0,-0.25) circle (4pt) node (meet) {};
\fill[color=blue!50!black] (0,0.6) circle (4pt) node (meet2) {};
\end{tikzpicture} 
= 
\begin{tikzpicture}[very thick,scale=0.75,color=blue!50!black, baseline]
\draw (0,1) node[right] (Xu) {};
\draw (0,-1) -- (0,1); 
\fill[color=green!50!black] (-0.9,-0.3) circle (2.5pt) node (mult1) {};
\fill[color=green!50!black] (-0.45,-0.3) circle (2.5pt) node (mult2) {};
\draw[color=green!50!black] (-0.4,-0.8) node[Odot] (D3) {}; 
\draw[color=green!50!black] (D3) .. controls +(-0.1,0.3) and +(0.25,-0.25) .. (-0.9,-0.3);
\draw[color=green!50!black] (-1.25,-1) .. controls +(0,0.5) and +(0,0) .. (-0.9,-0.3);
\draw[color=green!50!black] (-0.9,-0.3) .. controls +(0,0.5) and +(0,0) .. (0,0.6);
\draw[color=green!50!black] (-0.75,-0.8) node[Odot] (D2) {}; 
\draw[color=white, line width=4pt] (D2) .. controls +(0.1,0.2) and +(-0.2,-0.2) .. (-0.45,-0.3);
\fill[color=green!50!black] (-0.45,-0.3) circle (2.5pt) node (mult2) {};
\draw[color=green!50!black] (D2) .. controls +(0.1,0.2) and +(-0.2,-0.2) .. (-0.45,-0.3);
\draw[color=green!50!black] (-0.15,-1) .. controls +(0,0.4) and +(0.2,-0.2) .. (-0.45,-0.3);
\draw[color=green!50!black] (-0.45,-0.3) .. controls +(0,0.5) and +(0,0) .. (0,0.55);
\fill[color=blue!50!black] (0,0.6) circle (4pt) node (meet2) {};
\end{tikzpicture} 
= 
\begin{tikzpicture}[very thick,scale=0.75,color=blue!50!black, baseline]
\draw (0,-1) node[below] (X) {{\scriptsize$M$}};
\draw[color=green!50!black] (-1,-1) node[below] (A1) {{\scriptsize$A_1$}};
\draw[color=green!50!black] (-0.5,-1) node[below] (A2) {{\scriptsize$A_2$}};
\draw (0,1) node[right] (Xu) {};
\draw (0,-1) -- (0,1); 
\draw[color=green!50!black] (-1,-1) .. controls +(0,0.5) and +(-0.5,-0.5) .. (0,0.55);
\draw[color=green!50!black] (-0.5,-1) .. controls +(0,0.25) and +(-0.25,-0.25) .. (0,0.45);
\fill[color=blue!50!black] (0,0.6) circle (4pt) node (meet2) {};
\end{tikzpicture} 
\, ,
\ee
and
\be
\begin{tikzpicture}[very thick,scale=0.75,color=blue!50!black, baseline]
\draw (0,-1) node[below] (X) {{\scriptsize$M$}};
\draw[color=green!50!black] (-0.5,-1) node[below] (X) {{\scriptsize$A_1$}};
\draw[color=green!50!black] (-1,-1) node[below] (X) {{\scriptsize$A_2$}};
\draw (0,1) node[right] (Xu) {};
\draw (0,-1) -- (0,1); 
\draw[color=green!50!black] (-0.5,-1) .. controls +(0,0.25) and +(-0.25,-0.25) .. (0,-0.25);
\draw[color=green!50!black] (-1,-1) .. controls +(0,0.5) and +(-0.5,-0.5) .. (0,0.6);
\fill[color=blue!50!black] (0,-0.25) circle (2.9pt) node (meet) {};
\fill[color=blue!50!black] (0,0.6) circle (2.9pt) node (meet2) {};
\end{tikzpicture} 
\overset{\eqref{eq:A1A2act2}}= 
\begin{tikzpicture}[very thick,scale=0.75,color=blue!50!black, baseline]
\draw (0,1) node[right] (Xu) {};
\draw (0,-1) -- (0,1); 
\draw[color=green!50!black] (-0.5,-1) .. controls +(0,0.25) and +(-0.25,-0.25) .. (0,-0.2);
\draw[color=green!50!black] (-0.2,-0.8) node[Odot] (D1) {}; 
\draw[color=green!50!black] (D1) .. controls +(0.1,0.3) and +(-0.2,-0.2) .. (0,-0.3);
\draw[color=green!50!black] (-1.25,-1) .. controls +(0,0.5) and +(-0.5,-0.5) .. (0,0.55);
\draw[color=green!50!black] (-0.9,0) node[Odot] (D2) {}; 
\draw[color=green!50!black] (D2) .. controls +(0.1,0.3) and +(-0.2,-0.2) .. (0,0.65);
\fill[color=blue!50!black] (0,-0.25) circle (4pt) node (meet) {};
\fill[color=blue!50!black] (0,0.6) circle (4pt) node (meet2) {};
\end{tikzpicture} 
= 
\begin{tikzpicture}[very thick,scale=0.75,color=blue!50!black, baseline]
\draw (0,1) node[right] (Xu) {};
\draw (0,-1) -- (0,1); 
\fill[color=green!50!black] (-0.9,-0.3) circle (2.5pt) node (mult1) {};
\fill[color=green!50!black] (-0.45,-0.3) circle (2.5pt) node (mult2) {};
\draw[color=green!50!black] (-1.3,-0.8) node[Odot] (D1) {}; 
\draw[color=green!50!black] (D1) .. controls +(0,0.25) and +(0,0) .. (-0.9,-0.3);
\draw[color=green!50!black] (-0.4,-1) .. controls +(0,0.25) and +(0.25,-0.25) .. (-0.9,-0.3);
\draw[color=green!50!black] (-0.9,-0.3) .. controls +(0,0.5) and +(0,0) .. (0,0.6);
\fill[color=green!50!black] (-0.45,-0.3) circle (2.5pt) node (mult2) {};
\draw[color=white, line width=4pt] (-0.9,-1) .. controls +(0,0.25) and +(-0.2,-0.2) .. (-0.45,-0.3);
\fill[color=green!50!black] (-0.45,-0.3) circle (2.5pt) node (mult2) {};
\draw[color=green!50!black] (-0.9,-1) .. controls +(0,0.25) and +(-0.2,-0.2) .. (-0.45,-0.3);
\draw[color=green!50!black] (-0.2,-0.8) node[Odot] (D3) {}; 
\draw[color=green!50!black] (D3) .. controls +(-0.1,0.2) and +(0.2,-0.2) .. (-0.45,-0.3);
\draw[color=green!50!black] (-0.45,-0.3) .. controls +(0,0.5) and +(0,0) .. (0,0.55);
\fill[color=blue!50!black] (0,0.6) circle (4pt) node (meet2) {};
\end{tikzpicture} 
= 
\begin{tikzpicture}[very thick,scale=0.75,color=blue!50!black, baseline]
\draw (0,1) node[right] (Xu) {};
\draw (0,-1) -- (0,1); 
\draw[color=green!50!black] (-0.4,-1) .. controls +(0,1) and +(-0.3,0) .. (0,0.65);
\draw[color=white, line width=4pt] (-0.9,-1) .. controls +(0,0.5) and +(-0.2,-0.75) .. (0,0.65);
\draw[color=green!50!black] (-0.9,-1) .. controls +(0,0.5) and +(-0.2,-0.75) .. (0,0.65);
\draw (0,-1) -- (0,1); 
\fill[color=green!50!black] (0,0.6) circle (4pt) node (meet2) {};
\fill[color=blue!50!black] (0,0.6) circle (4pt) node (meet2) {};
\end{tikzpicture} 
\overset{\eqref{eq:A1A2-resolve}}{=} 
\begin{tikzpicture}[very thick,scale=0.75,color=blue!50!black, baseline]
\draw (0,-1) node[below] (X) {{\scriptsize$M$}};
\draw[color=green!50!black] (-0.5,-1) node[below] (A1) {{\scriptsize$A_1$}};
\draw[color=green!50!black] (-1,-1) node[below] (A2) {{\scriptsize$A_2$}};
\draw (0,1) node[right] (Xu) {};
\draw[color=green!50!black] (A1) .. controls +(0,0.5) and +(-0.5,-0.5) .. (0,0.6);
\draw[color=white, line width=4pt] (A2) .. controls +(0,0.5) and +(-0.25,-0.25) .. (0,-0.25);
\draw[color=green!50!black] (A2) .. controls +(0,0.5) and +(-0.25,-0.25) .. (0,-0.25);
\draw (0,-1) -- (0,1); 
\fill[color=blue!50!black] (0,-0.25) circle (2.9pt) node (meet) {};
\fill[color=blue!50!black] (0,0.6) circle (2.9pt) node (meet2) {};
\end{tikzpicture} 
\, .
\ee

(ii) $\Rightarrow$ (i): 
Given actions of $A_1$ and $A_2$ on $M$ which satisfy \eqref{eq:A1A2comp} we define an action of $A_1 \otimes A_2$ via
\be
\label{eq:A1A2act3}
\begin{tikzpicture}[very thick,scale=0.75,color=blue!50!black, baseline]
\draw (0,-1) node[below] (X) {{\scriptsize$M$}};
\draw[color=green!50!black] (-1,-1) node[below] (A1) {{\scriptsize$A_1$}};
\draw[color=green!50!black] (-0.5,-1) node[below] (A2) {{\scriptsize$A_2$}};
\draw (0,1) node[right] (Xu) {};
\draw (0,-1) -- (0,1); 
\draw[color=green!50!black] (-1,-1) .. controls +(0,0.5) and +(-0.5,-0.5) .. (0,0.55);
\draw[color=green!50!black] (-0.5,-1) .. controls +(0,0.25) and +(-0.25,-0.25) .. (0,0.45);
\fill[color=blue!50!black] (0,0.5) circle (4pt) node (meet2) {};
\end{tikzpicture} 
\stackrel{\textrm{def}}{=}
\begin{tikzpicture}[very thick,scale=0.75,color=blue!50!black, baseline]
\draw (0,-1) node[below] (X) {{\scriptsize$M$}};
\draw[color=green!50!black] (-0.5,-1) node[below] (X) {{\scriptsize$A_2$}};
\draw[color=green!50!black] (-1,-1) node[below] (X) {{\scriptsize$A_1$}};
\draw (0,1) node[right] (Xu) {};
\draw (0,-1) -- (0,1); 
\draw[color=green!50!black] (-0.5,-1) .. controls +(0,0.25) and +(-0.25,-0.25) .. (0,-0.25);
\draw[color=green!50!black] (-1,-1) .. controls +(0,0.5) and +(-0.5,-0.5) .. (0,0.6);
\fill[color=blue!50!black] (0,-0.25) circle (2.9pt) node (meet) {};
\fill[color=blue!50!black] (0,0.6) circle (2.9pt) node (meet2) {};
\end{tikzpicture} 
\, .
\ee
This is indeed an $A_1 \otimes A_2$-action: The unit property is clear and for associativity one computes
\be
\begin{tikzpicture}[very thick,scale=0.75,color=blue!50!black, baseline]
\draw (0,1) node[right] (Xu) {};
\draw (0,-1) -- (0,1); 
\fill[color=green!50!black] (-0.9,-0.3) circle (2.5pt) node (mult1) {};
\fill[color=green!50!black] (-0.45,-0.3) circle (2.5pt) node (mult2) {};
\draw[color=green!50!black] (-0.4,-1) .. controls +(-0,0.3) and +(0.25,-0.25) .. (-0.9,-0.3);
\draw[color=green!50!black] (-1.25,-1) .. controls +(0,0.5) and +(0,0) .. (-0.9,-0.3);
\draw[color=green!50!black] (-0.9,-0.3) .. controls +(0,0.5) and +(0,0) .. (0,0.6);
\draw[color=white, line width=4pt] (-0.75,-1) .. controls +(0,0.3) and +(-0.2,-0.2) .. (-0.45,-0.3);
\fill[color=green!50!black] (-0.45,-0.3) circle (2.5pt) node (mult2) {};
\draw[color=green!50!black] (-0.75,-1) .. controls +(0,0.3) and +(-0.2,-0.2) .. (-0.45,-0.3);
\draw[color=green!50!black] (-0.15,-1) .. controls +(0,0.4) and +(0.2,-0.2) .. (-0.45,-0.3);
\draw[color=green!50!black] (-0.45,-0.3) .. controls +(0,0.5) and +(0,0) .. (0,0.55);
\fill[color=blue!50!black] (0,0.6) circle (4pt) node (meet2) {};
\end{tikzpicture} 
= 
\begin{tikzpicture}[very thick,scale=0.75,color=blue!50!black, baseline]
\draw (0,-1) node[below] (X) {};
\draw[color=green!50!black] (-0.5,-1) node[below] (A1) {};
\draw[color=green!50!black] (-1,-1) node[below] (A2) {};
\draw[color=green!50!black] (-1.2,-1) node[below] (A3) {};
\draw[color=green!50!black] (-0.2,-1) node[below] (A4) {};
\draw (0,1) node[right] (Xu) {};
\draw[color=green!50!black] (A1) .. controls +(0,0.5) and +(-0.5,-0.5) .. (0,0.5);
\draw[color=white, line width=4pt] (A2) .. controls +(0,0.5) and +(-0.25,-0.25) .. (0,-0.25);
\draw[color=green!50!black] (A2) .. controls +(0,0.5) and +(-0.25,-0.25) .. (0,-0.25);
\draw (0,-1) -- (0,1); 
\draw[color=green!50!black] (A3) .. controls +(0,0.5) and +(-0.5,-0.5) .. (0,0.8);
\draw[color=green!50!black] (A4) .. controls +(0,0.25) and +(-0.1,-0.1) .. (0,-0.6);
\fill[color=blue!50!black] (0,-0.25) circle (2.9pt) node (meet) {};
\fill[color=blue!50!black] (0,0.5) circle (2.9pt) node (meet2) {};
\fill[color=blue!50!black] (0,0.8) circle (2.9pt) node (meet3) {};
\fill[color=blue!50!black] (0,-0.6) circle (2.9pt) node (meet4) {};
\end{tikzpicture} 
\overset{\eqref{eq:A1A2comp}}= 
\begin{tikzpicture}[very thick,scale=0.75,color=blue!50!black, baseline]
\draw (0,-1) node[below] (X) {};
\draw[color=green!50!black] (-0.5,-1) node[below] (A1) {};
\draw[color=green!50!black] (-1,-1) node[below] (A2) {};
\draw[color=green!50!black] (-1.2,-1) node[below] (A3) {};
\draw[color=green!50!black] (-0.2,-1) node[below] (A4) {};
\draw (0,1) node[right] (Xu) {};
\draw[color=green!50!black] (A1) .. controls +(0,0.25) and +(-0.25,-0.25) .. (0,-0.25);
\draw[color=green!50!black] (A2) .. controls +(0,0.5) and +(-0.25,-0.25) .. (0,0.5);
\draw (0,-1) -- (0,1); 
\draw[color=green!50!black] (A3) .. controls +(0,0.5) and +(-0.5,-0.5) .. (0,0.8);
\draw[color=green!50!black] (A4) .. controls +(0,0.25) and +(-0.1,-0.1) .. (0,-0.6);
\fill[color=blue!50!black] (0,-0.25) circle (2.9pt) node (meet) {};
\fill[color=blue!50!black] (0,0.5) circle (2.9pt) node (meet2) {};
\fill[color=blue!50!black] (0,0.8) circle (2.9pt) node (meet3) {};
\fill[color=blue!50!black] (0,-0.6) circle (2.9pt) node (meet4) {};
\end{tikzpicture} 
= 
\begin{tikzpicture}[very thick,scale=0.75,color=blue!50!black, baseline]
\draw (0,-1) node[below] (X) {};
\draw[color=green!50!black] (-0.5,-1) node[below] (A1) {};
\draw[color=green!50!black] (-1,-1) node[below] (A2) {};
\draw[color=green!50!black] (-1.2,-1) node[below] (A3) {};
\draw[color=green!50!black] (-0.2,-1) node[below] (A4) {};
\draw (0,1) node[right] (Xu) {};
\draw[color=green!50!black] (A1) .. controls +(0,0.25) and +(-0.25,-0.25) .. (0,-0.25);
\draw[color=green!50!black] (A2) .. controls +(0,0.5) and +(-0.25,-0.25) .. (0,0.55);
\draw (0,-1) -- (0,1); 
\draw[color=green!50!black] (A3) .. controls +(0,0.5) and +(-0.5,-0.5) .. (0,0.65);
\draw[color=green!50!black] (A4) .. controls +(0,0.25) and +(-0.1,-0.1) .. (0,-0.35);
\fill[color=blue!50!black] (0,-0.3) circle (4pt) node (meet) {};
\fill[color=blue!50!black] (0,0.6) circle (4pt) node (meet2) {};
\end{tikzpicture} 
 .
\ee
If the $A_i$-actions are those in \eqref{eq:A1A2act2}, then  the calculation in \eqref{eq:A1A2-resolve} shows that \eqref{eq:A1A2act3} recovers \eqref{eq:A1A2act}. 
\end{proof}

We can iterate the characterisation of Lemma~\ref{lem:A1A2} for any number $n\in \Z_+$ of algebras $A_1, \dots, A_n$ and $A_1 \otimes \dots \otimes A_n$-modules. 
To stress the importance of the individual factors~$A_i$ we will use a special name for such modules: 

\begin{definitionlemma}
Let $A_1, \dots, A_n \in \mathcal C$ be algebras. 
A \textsl{multi-module} for the list $(A_1, \dots, A_n)$ is an $A_1 \otimes \dots \otimes A_n$-module. 
Equivalently, a multi-module for  $(A_1, \dots, A_n)$ is 
\begin{itemize}
\item
an object $M \in \mathcal C$ with an $A_i$-module structure for all $i \in \{ 1,\dots, n \}$, such that
\item 
for all $i<j$ we have 
\be
\begin{tikzpicture}[very thick,scale=0.75,color=blue!50!black, baseline]
\draw (0,-1) node[below] (X) {{\scriptsize$M$}};
\draw[color=green!50!black] (-0.5,-1) node[below] (A1) {{\scriptsize$A_j$}};
\draw[color=green!50!black] (-1,-1) node[below] (A2) {{\scriptsize$A_i$}};
\draw (0,1) node[right] (Xu) {};
\draw[color=green!50!black] (A2) .. controls +(0,0.5) and +(-0.25,-0.25) .. (0,-0.25);
\draw[color=white, line width=4pt] (A1) .. controls +(0,0.5) and +(-0.5,-0.5) .. (0,0.6);
\draw[color=green!50!black] (A1) .. controls +(0,0.5) and +(-0.5,-0.5) .. (0,0.6);
\draw (0,-1) -- (0,1); 
\fill[color=blue!50!black] (0,-0.25) circle (2.9pt) node (meet) {};
\fill[color=blue!50!black] (0,0.6) circle (2.9pt) node (meet2) {};
\end{tikzpicture} 
= 
\begin{tikzpicture}[very thick,scale=0.75,color=blue!50!black, baseline]
\draw (0,-1) node[below] (X) {{\scriptsize$M$}};
\draw[color=green!50!black] (-0.5,-1) node[below] (X) {{\scriptsize$A_j$}};
\draw[color=green!50!black] (-1,-1) node[below] (X) {{\scriptsize$A_i$}};
\draw (0,1) node[right] (Xu) {};
\draw (0,-1) -- (0,1); 
\draw[color=green!50!black] (-0.5,-1) .. controls +(0,0.25) and +(-0.25,-0.25) .. (0,-0.25);
\draw[color=green!50!black] (-1,-1) .. controls +(0,0.5) and +(-0.5,-0.5) .. (0,0.6);
\fill[color=blue!50!black] (0,-0.25) circle (2.9pt) node (meet) {};
\fill[color=blue!50!black] (0,0.6) circle (2.9pt) node (meet2) {};
\end{tikzpicture} 
\, . 
\ee
\end{itemize}
\end{definitionlemma}

We will often abbreviate a multi-module as above as ${}_{A_1\dots A_n}M$. 
For another multi-module ${}_{A_1\dots A_n}M'$, the natural notion of structure-preserving map from ${}_{A_1\dots A_n}M$ to ${}_{A_1\dots A_n}M'$ is an $A_1\otimes \dots \otimes A_n$-module map $M \to M'$. 
Equivalently: 

\begin{definition}
\label{definition:mapmulti}
A \textsl{map of multi-modules} from ${}_{A_1\dots A_n}M$ to ${}_{A_1\dots A_n}M'$ is a morphism $\varphi \in \Hom_{\mathcal C}(M,M')$ such that 
\be
\begin{tikzpicture}[very thick,scale=0.75,color=blue!50!black, baseline]
\draw (0,-1) node[below] (X) {{\scriptsize$M$}};
\draw[color=green!50!black] (-0.5,-1) node[below] (X) {{\scriptsize$A_i$}};
\draw (0,1) node[right] (Xu) {};
\draw (0,-1) -- (0,1); 
\draw[color=green!50!black] (-0.5,-1) .. controls +(0,0.25) and +(-0.25,-0.25) .. (0,-0.25);
\fill[color=black] (0,0.6) circle (2.9pt) node[right] (meet2) {{\scriptsize$\varphi$}};
\fill[color=blue!50!black] (0,-0.25) circle (2.9pt) node (meet) {};
\end{tikzpicture} 
=
\begin{tikzpicture}[very thick,scale=0.75,color=blue!50!black, baseline]
\draw (0,-1) node[below] (X) {{\scriptsize$M$}};
\draw[color=green!50!black] (-0.5,-1) node[below] (X) {{\scriptsize$A_i$}};
\draw (0,1) node[right] (Xu) {};
\draw (0,-1) -- (0,1); 
\fill[color=black] (0,-0.25) circle (2.9pt) node[right] (meet2) {{\scriptsize$\varphi$}};
\draw[color=green!50!black] (-0.5,-1) .. controls +(0,0.5) and +(-0.5,-0.5) .. (0,0.6);
\fill[color=blue!50!black] (0,0.6) circle (2.9pt) node (meet) {};
\end{tikzpicture} 
\quad\text{for all } 
i \in \{1,\dots,n \} 
\, . 
\ee
	The vector space of these maps is denoted  $\Hom_{A_{1} \otimes \ldots \otimes A_{n}}(M,M')$. 

\end{definition}

\begin{example}
\label{ex:Amultmod}
Let $A \in \mathcal C$ be a commutative algebra, that is, an algebra for which $\mu \circ c_{A,A} = \mu$.
Then its multiplication makes ${}_{AA}A$ a multi-module. 
Indeed, we have 
\be\label{eq:comm-A-multi-example}
\begin{tikzpicture}[very thick,scale=0.75,color=blue!50!black, baseline]
\draw (0,-1) node[below] (X) {};
\draw[color=green!50!black] (-0.5,-1) node[below] (X) {};
\draw[color=green!50!black] (-1,-1) node[below] (X) {};
\draw[color=green!50!black] (0,-1) -- (0,1); 
\fill[color=green!50!black] (0,-0.25) circle (2.5pt) node (meet) {};
\fill[color=green!50!black] (0,0.6) circle (2.5pt) node (meet2) {};
\draw[color=green!50!black] (-0.5,-1) .. controls +(0,0.25) and +(-0.25,-0.25) .. (0,-0.25);
\draw[color=green!50!black] (-1,-1) .. controls +(0,0.5) and +(-0.5,-0.5) .. (0,0.6);
\end{tikzpicture} 
=
\begin{tikzpicture}[very thick,scale=0.75,color=blue!50!black, baseline]
\draw[color=green!50!black] (0,-1) -- (0,1); 
\fill[color=green!50!black] (0,0.6) circle (2.5pt) node (meet2) {};
\fill[color=green!50!black] (-0.85,0) circle (2.5pt) node (mult1) {};
\draw[color=green!50!black] (-0.5,-1) .. controls +(0,0.25) and +(0.25,-0.25) .. (-0.85,0);
\draw[color=green!50!black] (-1.2,-1) .. controls +(0,0.5) and +(0,0) .. (-0.85,0);
\draw[color=green!50!black] (-0.85,0) .. controls +(0,0.5) and +(0,0) .. (0,0.6);
\end{tikzpicture} 
=
\begin{tikzpicture}[very thick,scale=0.75,color=blue!50!black, baseline]
\draw[color=green!50!black] (0,-1) -- (0,1); 
\fill[color=green!50!black] (0,0.6) circle (2.5pt) node (meet2) {};
\fill[color=green!50!black] (-0.85,0) circle (2.5pt) node (mult1) {};
\draw[color=green!50!black] (-1.2,-0.3) .. controls +(0,0.25) and +(0,0) .. (-0.85,0);
\draw[color=green!50!black] (-1.2,-0.3) .. controls +(0,-0.25) and +(0,0.25) .. (-0.5,-1);
\draw[color=green!50!black] (-0.5,-0.3) .. controls +(0,0.25) and +(0,0) .. (-0.85,0);
\draw[color=white, line width=4pt] (-0.5,-0.3) .. controls +(0,-0.25) and +(0,0.25) .. (-1.2,-1);
\draw[color=green!50!black] (-0.5,-0.3) .. controls +(0,-0.25) and +(0,0.25) .. (-1.2,-1);
\draw[color=green!50!black] (-0.85,0) .. controls +(0,0.5) and +(0,0) .. (0,0.6);
\end{tikzpicture} 
=
\begin{tikzpicture}[very thick,scale=0.75,color=blue!50!black, baseline]
\fill[color=green!50!black] (0,-1) node[below] (X) {};
\draw[color=green!50!black] (-0.5,-1) node[below] (A1) {};
\draw[color=green!50!black] (-1,-1) node[below] (A2) {};
\draw[color=green!50!black] (A2) .. controls +(0,0.5) and +(-0.25,-0.25) .. (0,-0.25);
\draw[color=white, line width=4pt] (A1) .. controls +(0,0.5) and +(-0.5,-0.5) .. (0,0.6);
\draw[color=green!50!black] (A1) .. controls +(0,0.5) and +(-0.5,-0.5) .. (0,0.6);
\draw[color=green!50!black] (0,-1) -- (0,1); 
\fill[color=green!50!black] (0,-0.25) circle (2.5pt) node (meet) {};
\fill[color=green!50!black] (0,0.6) circle (2.5pt) node (meet2) {};
\end{tikzpicture} 
\ee
so the claim follows by Lemma~\ref{lem:A1A2}. 
In the same way one checks that ${}_{A\dots A}A$ is a multi-module for any number of algebras that act.
In fact, multiplication endows $A$ with an ${}_{AA}A$-multi-module structure if and only if $A$ is commutative, for, if $A$ is an ${}_{AA}A$-multi-module, the left- and right-hand side of \eqref{eq:comm-A-multi-example} are equal. 
Precomposing with $\id_A \otimes \id_A \otimes \eta$ gives $\mu = \mu \circ c_{A,A}$.
\end{example}

\begin{remark}
\label{rem:sym} 
In Section~\ref{sec:RTdef} we will use  multi-modules ${}_{A_1\dots A_n}M$ with certain extra data (a cyclic symmetry) to label line defects with $n$ incident surface defects, cf.\ Figure~\ref{fig:intro-linedef} for the case $n=3$. As illustrated in that figure, there is no total order, only a cyclic order, as one can ``flip'' surface defects from the back to the front
and vice versa. 
This will be modelled algebraically by the following manipulation of ribbon graphs: 
\be
\label{eq:twistmot}
\begin{tikzpicture}[very thick,scale=0.75,color=blue!50!black, baseline]
\draw (0,-1) node[below] (X) {};
\draw[color=green!50!black] (-1,-1) node[below] (A1) {};
\draw[line width=1pt, color=black] (0,0.5) node[inner sep=4pt,draw, rounded corners=1pt] (R) {{\scriptsize$\;\rho\;$}};
\draw[line width=3pt] (0,-1) -- (R); 
\draw[line width=3pt] (0,1.5) -- (R); 
\draw[color=green!50!black, line width=3pt] (A1) .. controls +(0,0.5) and +(0,-0.5) .. (-0.28,0.2);
\draw (0,-1) node[below] (X) {{\scriptsize$M$}};
\draw[color=green!50!black] (-1,-1) node[below] (A1) {{\scriptsize$A$}};
\draw[line width=1pt, color=black] (0,0.5) node[inner sep=4pt,draw, rounded corners=1pt] (R) {{\scriptsize$\;\rho\;$}};
\end{tikzpicture} 
\; \; \;
\stackrel{\text{flip}}{\rightsquigarrow}
\; \; \;
\begin{tikzpicture}[line width=3pt, scale=0.75, color=blue!50!black, baseline]
\fill[pattern=north west lines, opacity=0.3]
	(-1.2,0.575) .. controls +(0.15,0) and +(-0.5,0) .. (0,1.275)
	-- (0,1.125)  .. controls +(-0.5,0) and +(0.15,0) .. (-1.2,0.425);
\draw[color=green!50!black, line width=3pt, opacity=0.5]
	(-1.2,0.5) .. controls +(0.15,0) and +(-0.5,0) .. (0,1.2);
\draw[line width=3pt] (0,-1) -- (R); 
\draw[color=white, line width=5pt] (1.2,0.5) .. controls +(-0.15,0) and +(0.5,0) .. (0,-0.4);
\fill[pattern=north east lines, opacity=0.3]
	(1.2,0.575) .. controls +(-0.15,0) and +(0.5,0) .. (0,1.275)
	-- (0,1.125)  .. controls +(0.5,0) and +(-0.15,0) .. (1.2,0.425);
\draw[color=green!50!black, line width=3pt, opacity=0.5]
	(1.2,0.5) .. controls +(-0.15,0) and +(0.5,0) .. (0,1.2);
\draw[color=green!50!black, line width=3pt] (1.2,0.5) .. controls +(-0.15,0) and +(0.5,0) .. (0,-0.4);
\draw[color=white, line width=5pt] (0,-0.4) .. controls +(-0.15,0) and +(0,0.5) .. (-1,-1);
\draw[color=green!50!black, line width=3pt] (0,-0.4) .. controls +(-0.15,0) and +(0,0.5) .. (-1,-1);
\draw[color=green!50!black, line width=3pt]
	(-0.28,0.2) .. controls +(0,-0.5) and +(0.15,0) .. (-1.2,0.5);
\draw[color=green!50!black] (-1,-1) node[below] (A1) {};
\draw[line width=1pt, color=black] (0,0.5) node[inner sep=4pt,draw, rounded corners=1pt] (R) {{\scriptsize$\;\rho\;$}};
\draw[color=white, line width=5pt] (0,1.5) -- (R); 
\draw[line width=3pt] (0,1.5) -- (R); 
\draw (0,-1) node[below] (X) {{\scriptsize$M$}};
\draw[color=green!50!black] (-1,-1) node[below] (A1) {{\scriptsize$A$}};
\draw[line width=1pt, color=black] (0,0.5) node[inner sep=4pt,draw, rounded corners=1pt] (R) {{\scriptsize$\;\rho\;$}};
\end{tikzpicture}
\; 
\stackrel{\text{deform}}{=} 
\; 
\begin{tikzpicture}[very thick,scale=0.75,color=blue!50!black, looseness=0.5, baseline]
\draw[color=green!50!black] (-1,-1) node[below] (A1) {};
\draw[line width=1pt, color=black] (0,0.5) node[inner sep=4pt,draw, rounded corners=1pt] (R) {{\scriptsize$\;\rho\;$}};
%
\fill (-0.075,-1) 
	-- (-0.075,-0.8) 
	to[out=90, in=180] (0,-0.7)
	to[out=0, in=90] (0.075,-0.8)
	-- (0.075,-1);
\fill [pattern=north west lines, opacity=0.3] (0,-0.7)
	to[out=0, in=270] (0.075,-0.6)
	-- (0.075,-0.5)
	to[out=90, in=0] (0,-0.4)
	to[out=180, in=90] (-0.075,-0.5)
	-- (-0.075,-0.6)
	to[out=270, in=180] (0,-0.7);
\fill[opacity = 0.2] (0,-0.7)
	to[out=0, in=270] (0.075,-0.6)
	-- (0.075,-0.5)
	to[out=90, in=0] (0,-0.4)
	to[out=180, in=90] (-0.075,-0.5)
	-- (-0.075,-0.6)
	to[out=270, in=180] (0,-0.7);
\fill (0,-0.4)
	to[out=180, in=270] (-0.075,-0.3) 
	-- (-0.075,0.2)
	-- (0.075,0.2)
	-- (0.075,-0.3) 
	to[out=270, in=0] (0,-0.4); 
\draw[line width=0.3pt, color=black] (-0.075,-1) 		
							-- (-0.075,-0.8) 
							to[out=90, in=180] (0,-0.7)
							to[out=0, in=270] (0.075,-0.6)
							-- (0.075,-0.5);
\node[inner sep=0.75pt,fill,white] at (0,-0.7) {};
\draw[line width=0.3pt, color=black] (0.075,-1) 		
							-- (0.075,-0.8) 
							to[out=90, in=0] (0,-0.7)
							to[out=180, in=270] (-0.075,-0.6)
							-- (-0.075,-0.5)
							to[out=90, in=180] (0,-0.4)
							to[out=0, in=270] (0.075,-0.3) 
							-- (0.075,0.2);
\node[inner sep=0.75pt,fill,white] at (0,-0.4) {};
\draw[line width=0.3pt, color=black] 	(0.075,-0.5)	
							to[out=90, in=0] (0,-0.4)
							to[out=180, in=270] (-0.075,-0.3) 
							-- (-0.075,0.2);
%
%
\fill (0.075,0.8) 
	-- (0.075,1) 
	to[out=90, in=0] (0,1.1)
	to[out=180, in=90] (-0.075,1.0)
	-- (-0.075,0.8);
\fill [pattern=north west lines, opacity=0.3] (0,1.1)
	to[out=0, in=270] (0.075,1.2)
	-- (0.075,1.3)
	to[out=90, in=0] (0,1.4)
	to[out=180, in=90] (-0.075,1.3)
	-- (-0.075,1.2)
	to[out=270, in=180] (0,1.1); 
\fill[opacity = 0.2] (0,1.1)
	to[out=0, in=270] (0.075,1.2)
	-- (0.075,1.3)
	to[out=90, in=0] (0,1.4)
	to[out=180, in=90] (-0.075,1.3)
	-- (-0.075,1.2)
	to[out=270, in=180] (0,1.1); 
\fill (0,1.4)
	to[out=180, in=270] (-0.075,1.5) 
	-- (-0.075,1.7)
	-- (0.075,1.7)
	-- (0.075,1.5)
	to[out=270, in=0] (0,1.4);
\draw[line width=0.3pt, color=black] (0.075,0.8) 		
							-- (0.075,1) 
							to[out=90, in=0] (0,1.1)
							to[out=180, in=270] (-0.075,1.2)
							-- (-0.075,1.3);
\node[inner sep=0.75pt,fill,white] at (0,1.1) {};
\draw[line width=0.3pt, color=black] (-0.075,0.8) 		
							-- (-0.075,1) 
							to[out=90, in=180] (0,1.1)
							to[out=0, in=270] (0.075,1.2)
							-- (0.075,1.3)
							to[out=90, in=0] (0,1.4)
							to[out=180, in=270] (-0.075,1.5) 
							-- (-0.075,1.7);
\node[inner sep=0.75pt,fill,white] at (0,1.4) {};
\draw[line width=0.3pt, color=black] (-0.075,1.3) 		
							to[out=90, in=180] (0,1.4)
							to[out=0, in=270] (0.075,1.5) 
							-- (0.075,1.7);
%
\draw[color=green!50!black, line width=3pt] (A1) .. controls +(0,0.5) and +(0,-0.5) .. (-0.28,0.2);
\draw (0,-1) node[below] (X) {{\scriptsize$M$}};
\draw[color=green!50!black] (-1,-1) node[below] (A1) {{\scriptsize$A$}};
\draw[line width=1pt, color=black] (0,0.5) node[inner sep=4pt,draw, rounded corners=1pt] (R) {{\scriptsize$\;\rho\;$}};
\end{tikzpicture} 
\ee
The right-hand side amounts to the morphism $A \otimes M \to M$ given by $\theta_M \circ \rho \circ (\id_A \otimes \theta_M^{-1})$ and motivates the next definition.
\end{remark}
 
\begin{definition}
\label{def:twistedmodule}
The \textsl{twist} of an $A$-module $(M,\rho)$ is the $A$-module $(M,\rho^{\textrm{tw}})$ with 
\be
\rho^{\textrm{tw}}
\;
\stackrel{\textrm{def}}{=} 
\begin{tikzpicture}[very thick,scale=0.75,color=blue!50!black, baseline]
\draw (0,-1) node[below] (X) {{\scriptsize$M$}};
\draw[color=green!50!black] (-0.5,-1) node[below] (X) {{\scriptsize$A$}};
\draw (0,-1) -- (0,1); 
\draw[color=green!50!black] (-0.5,-1) .. controls +(0,0.25) and +(-0.25,-0.25) .. (0,0.1);
\fill[color=black] (0,0.1) circle (2.9pt) node[right] (meet) {{\scriptsize$\rho$}};
\fill[color=black] (0,0.6) circle (2.9pt) node[right] (meet2) {{\scriptsize$\theta_M$}};
\fill[color=black] (0,-0.6) circle (2.9pt) node[right] (meet2) {{\scriptsize$\theta^{-1}_M$}};
\end{tikzpicture} 
\, . 
\ee
\end{definition}

It immediately follows from this definition that 
\be\label{eq:thetaiso}
\theta_M \colon (M,\rho) \lra (M,\rho^{\textrm{tw}})
\ee
provides an $A$-module isomorphism between a module and its twist. We also note that via~\eqref{eq:twist} we can rewrite
\be\label{eq:alternative-rhotw}
\rho^{\textrm{tw}}
\; 
=
\begin{tikzpicture}[very thick,scale=0.75,color=blue!50!black, baseline]
\draw (0,-1) node[below] (X) {{\scriptsize$M$}};
\draw[color=green!50!black] (-0.5,-1) node[below] (X) {{\scriptsize$A$}};
\draw[color=green!50!black] (0.3,-0.2) .. controls +(0,0.25) and +(0,-0.25) .. (-0.3,0.2);
\draw[color=white, line width=4pt] (0,-0.2) -- (0,1); 
\draw (0,-1) -- (0,1); 
\draw[color=green!50!black] (-0.3,0.2) .. controls +(0,0.25) and +(0,-0.25) .. (0,0.6);
\draw[color=white, line width=4pt] (0.3,-0.2) .. controls +(0,-0.25) and +(0,0.6) .. (-0.5,-1);
\draw[color=green!50!black] (0.3,-0.2) .. controls +(0,-0.25) and +(0,0.6) .. (-0.5,-1);
%
%
\fill[color=black] (0,0.6) circle (2.9pt) node[right] (meet2) {{\scriptsize$\rho$}};
\fill[color=green!50!black] (-0.37,-0.65) circle (2.5pt) node[left] (meet2) {{\scriptsize$\theta_A$}};
\end{tikzpicture} 
\, .
\ee

\begin{lemma}
Let ${}_{A_1A_2}M$ be a multi-module with $A_i$-action $\rho_i \colon A_i \otimes M \to M$ for $i \in \{ 1,2 \}$. 
Then~$M$ is also a multi-module ${}_{A_2A_1}M^{\textrm{tw}_{1}}$ with $A_2$-action~$\rho_2$ and $A_1$-action $\rho_1^{\textrm{tw}}$. 
\end{lemma}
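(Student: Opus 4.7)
The plan is as follows. By the iterated form of Lemma~\ref{lem:A1A2} (the Definition and Lemma immediately preceding Definition~\ref{definition:mapmulti}), verifying that $(M,\rho_2,\rho_1^{\textrm{tw}})$ carries an ${}_{A_2A_1}$-multi-module structure reduces to two checks: (i) $(M,\rho_1^{\textrm{tw}})$ is an $A_1$-module, and (ii) the compatibility \eqref{eq:A1A2comp} holds for the ordered pair $(A_2,A_1)$, i.e.\ the $A_1$-strand passes over the $A_2$-strand when both act on $M$.

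Item (i) is free: equation~\eqref{eq:thetaiso} already exhibits $\theta_M\colon(M,\rho_1)\to(M,\rho_1^{\textrm{tw}})$ as an $A_1$-module isomorphism, so in particular $\rho_1^{\textrm{tw}}$ is a bona fide $A_1$-action.

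The core of the proof is therefore item (ii), and I would verify it by a direct string-diagrammatic calculation. Substituting the alternative form \eqref{eq:alternative-rhotw} of $\rho_1^{\textrm{tw}}$ into both sides of the target ``$A_1$-over-$A_2$'' identity replaces each occurrence of $\rho_1^{\textrm{tw}}$ by $\rho_1$ together with a twist $\theta_{A_1}$ on the $A_1$-input and an over-then-under wrap of the $A_1$-strand around $M$. Naturality of the braiding lets me move this wrap past $\rho_2$; the original $(A_1,A_2)$-compatibility (in the equivalent inverse-braiding form of \eqref{eq:A1A2comp}) swaps $A_1$ past $A_2$; and finally the accumulated double braid and twist on $A_1\otimes M$ collapse via the twist axiom \eqref{eq:twist}, yielding the required equation.

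The main obstacle is the bookkeeping of braidings and twists along the $A_1$-strand. A conceptually cleaner alternative I would fall back on, should the direct chase become unwieldy, is to promote $\theta_M$ to an $A_1\otimes A_2$-module isomorphism between the original multi-module structure and an appropriately twisted variant, thereby reducing (ii) to the original $(A_1,A_2)$-compatibility combined with one application of \eqref{eq:twist}. In either version, the ribbon equation is doing the essential work: the twist on the $A_1$-action is exactly what compensates for reversing the cyclic order of the two surface defects around the line defect $M$, as anticipated in Remark~\ref{rem:sym}.
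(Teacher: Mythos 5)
Your proposal matches the paper's proof: the paper likewise invokes Lemma~\ref{lem:A1A2} to reduce the claim to the compatibility condition \eqref{eq:A1A2comp} for the ordered pair $(A_2,A_1)$, then verifies it by a string-diagram computation that substitutes the alternative form \eqref{eq:alternative-rhotw} of $\rho_1^{\textrm{tw}}$ and applies \eqref{eq:A1A2comp} for the original $(A_1,A_2)$-module together with naturality and \eqref{eq:twist}. Your explicit preliminary check that $\rho_1^{\textrm{tw}}$ is itself an $A_1$-action (via \eqref{eq:thetaiso}) is implicit in the paper but is the same observation, not a different route.
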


\begin{proof}
According to Lemma~\ref{lem:A1A2} we have to check that 
\be
\begin{tikzpicture}[very thick,scale=0.75,color=blue!50!black, baseline]
\draw (0,-1) node[below] (X) {{\scriptsize$M$}};
\draw[color=green!50!black] (-0.5,-1) node[below] (X) {{\scriptsize$A_2$}};
\draw[color=green!50!black] (-1,-1) node[below] (X) {{\scriptsize$A_1$}};
\draw (0,1) node[right] (Xu) {};
\draw (0,-1) -- (0,1); 
\draw[color=green!50!black] (-0.5,-1) .. controls +(0,0.25) and +(-0.25,-0.25) .. (0,-0.25);
\draw[color=green!50!black] (-1,-1) .. controls +(0,0.5) and +(-0.5,-0.5) .. (0,0.6);
\fill[color=black] (0,-0.25) circle (2.9pt) node[right] (meet) {{\scriptsize$\rho_2$}};
\fill[color=black] (0,0.6) circle (2.9pt) node[right] (meet2) {{\scriptsize$\rho_1^{\textrm{tw}}$}};
\end{tikzpicture} 
= 
\begin{tikzpicture}[very thick,scale=0.75,color=blue!50!black, baseline]
\draw (0,-1) node[below] (X) {{\scriptsize$M$}};
\draw[color=green!50!black] (-0.5,-1) node[below] (A1) {{\scriptsize$A_2$}};
\draw[color=green!50!black] (-1,-1) node[below] (A2) {{\scriptsize$A_1$}};
\draw (0,1) node[right] (Xu) {};
\draw[color=green!50!black] (A1) .. controls +(0,0.5) and +(-0.5,-0.5) .. (0,0.6);
\draw[color=white, line width=4pt] (A2) .. controls +(0,0.5) and +(-0.25,-0.25) .. (0,-0.25);
\draw[color=green!50!black] (A2) .. controls +(0,0.5) and +(-0.25,-0.25) .. (0,-0.25);
\draw (0,-1) -- (0,1); 
\fill[color=black] (0,-0.25) circle (2.9pt) node[right] (meet) {{\scriptsize$\rho_1^{\textrm{tw}}$}};
\fill[color=black] (0,0.6) circle (2.9pt) node[right] (meet2) {{\scriptsize$\rho_2$}};
\end{tikzpicture} 
\, . 
\ee
Indeed, using the form \eqref{eq:alternative-rhotw} of the twisted action and \eqref{eq:A1A2comp} for the $(A_1,A_2)$-multi-module~$M$ gives
\be
\begin{tikzpicture}[very thick,scale=1,color=blue!50!black, baseline]
\draw (0,-1) node[below] (X) {{\scriptsize$M$}};
\draw[color=green!50!black] (-0.7,-1) node[below] (X) {{\scriptsize$A_1$}};
\draw[color=green!50!black] (-0.35,-1) node[below] (X) {{\scriptsize$A_2$}};
\draw[color=green!50!black] (0.3,-0.2) .. controls +(0,0.25) and +(0,-0.25) .. (-0.3,0.2);
\draw[color=white, line width=4pt] (0,-0.2) -- (0,1); 
\draw (0,-1) -- (0,1); 
\draw[color=green!50!black] (-0.3,0.2) .. controls +(0,0.25) and +(-0.1,-0.25) .. (0,0.6);
\draw[color=white, line width=4pt] (0.3,-0.2) .. controls +(0,-0.25) and +(0,0.6) .. (-0.7,-1);
\draw[color=green!50!black] (0.3,-0.2) .. controls +(0,-0.25) and +(0,0.6) .. (-0.7,-1);
\fill[color=green!50!black] (-0.53,-0.65) circle (2.5pt) node[above] (meet2) {{\scriptsize$\theta_{A_1}$}};
\draw[color=green!50!black] (-0.35,-1) .. controls +(0,0.2) and +(-0.2,-0.2) .. (0,-0.65);
\fill[color=black] (0,-0.65) circle (2.5pt) node[right] (meet2) {{\scriptsize$\rho_2$}};
\fill[color=black] (0,0.6) circle (2.5pt) node[right] (meet2) {{\scriptsize$\rho_1$}};
\end{tikzpicture} 
= 
\begin{tikzpicture}[very thick,scale=1,color=blue!50!black, baseline]
\draw (0,-1) node[below] (X) {{\scriptsize$M$}};
\draw[color=green!50!black] (-0.7,-1) node[below] (X) {{\scriptsize$A_1$}};
\draw[color=green!50!black] (-0.35,-1) node[below] (X) {{\scriptsize$A_2$}};
\draw[color=green!50!black] (0.3,-0.2) .. controls +(0,0.25) and +(0,-0.25) .. (-0.3,0.2);
\draw[color=white, line width=4pt] (0,-0.2) -- (0,1); 
\draw (0,-1) -- (0,1); 
\draw[color=green!50!black] (-0.3,0.2) .. controls +(0,0.25) and +(-0.1,-0.25) .. (0,0.6);
\draw[color=green!50!black] (-0.35,-1) .. controls +(0,0.2) and +(0,-0.2) .. (-0.2,-0.2);
\fill[color=black] (0,0.3) circle (2.5pt) node[right] (meet2) {};
\draw[color=white, line width=4pt] (0.3,-0.2) .. controls +(0,-0.25) and +(0,0.6) .. (-0.7,-1);
\draw[color=green!50!black] (0.3,-0.2) .. controls +(0,-0.25) and +(0,0.6) .. (-0.7,-1);
\draw[color=white, line width=4pt] (-0.2,-0.2) .. controls +(0,0.2) and +(-0.2,-0.2) .. (0,0.3);
\draw[color=green!50!black] (-0.2,-0.2) .. controls +(0,0.2) and +(-0.2,-0.2) .. (0,0.3);
\fill[color=green!50!black] (-0.53,-0.65) circle (2.5pt) node[above] (meet2) {{\scriptsize$\theta_{A_1}$}};
\fill[color=black] (0,0.6) circle (2.5pt) node[right] (meet2) {{\scriptsize$\rho_1$}};
\fill[color=black] (0,0.3) circle (2.5pt) node[right] (meet2) {{\scriptsize$\rho_2$}};
\end{tikzpicture} 
= 
\begin{tikzpicture}[very thick,scale=1,color=blue!50!black, baseline]
\draw (0,-1) node[below] (X) {{\scriptsize$M$}};
\draw[color=green!50!black] (-0.7,-1) node[below] (X) {{\scriptsize$A_1$}};
\draw[color=green!50!black] (-0.35,-1) node[below] (X) {{\scriptsize$A_2$}};
\draw[color=green!50!black] (0.3,-0.2) .. controls +(0,0.25) and +(0,-0.25) .. (-0.2,0.2);
\draw[color=white, line width=4pt] (0,-0.2) -- (0,1); 
\draw (0,-1) -- (0,1); 
\draw[color=green!50!black] (-0.2,0.2) .. controls +(0,0.25) and +(-0.1,-0.25) .. (0,0.6);
\draw[color=green!50!black] (-0.35,-1) .. controls +(0,0.2) and +(0,-0.2) .. (-0.25,-0.2);
\draw[color=green!50!black] (-0.25,-0.2) .. controls +(0,0.2) and +(-0.5,-0.2) .. (0,0.85);
\draw[color=white, line width=4pt] (0.3,-0.2) .. controls +(0,-0.25) and +(0,0.6) .. (-0.7,-1);
\draw[color=green!50!black] (0.3,-0.2) .. controls +(0,-0.25) and +(0,0.6) .. (-0.7,-1);
\fill[color=green!50!black] (-0.53,-0.65) circle (2.5pt) node[above] (meet2) {{\scriptsize$\theta_{A_1}$}};
\fill[color=black] (0,0.6) circle (2.5pt) node[right] (meet2) {{\scriptsize$\rho_1$}};
\fill[color=black] (0,0.85) circle (2.5pt) node[right] (meet2) {{\scriptsize$\rho_2$}};
\end{tikzpicture} 
= 
\begin{tikzpicture}[very thick,scale=1,color=blue!50!black, baseline]
\draw (0,-1) node[below] (X) {{\scriptsize$M$}};
\draw[color=green!50!black] (-0.5,-1) node[below] (A1) {{\scriptsize$A_2$}};
\draw[color=green!50!black] (-1,-1) node[below] (A2) {{\scriptsize$A_1$}};
\draw (0,1) node[right] (Xu) {};
\draw[color=green!50!black] (A1) .. controls +(0,0.5) and +(-0.5,-0.5) .. (0,0.6);
\draw[color=white, line width=4pt] (A2) .. controls +(0,0.5) and +(-0.25,-0.25) .. (0,-0.25);
\draw[color=green!50!black] (A2) .. controls +(0,0.5) and +(-0.25,-0.25) .. (0,-0.25);
\draw (0,-1) -- (0,1); 
\fill[color=black] (0,-0.25) circle (2.5pt) node[right] (meet) {{\scriptsize$\rho_1^{\textrm{tw}}$}};
\fill[color=black] (0,0.6) circle (2.5pt) node[right] (meet2) {{\scriptsize$\rho_2$}};
\end{tikzpicture} 
\, . 
\ee
\end{proof}

The above directly carries over to multi-modules for an arbitrary number of algebras: 

\begin{definition}
\label{def:twmm}
Let ${}_{A_1\dots A_n}M$ be a multi-module with $A_i$-action $\rho_i \colon A_i \otimes M \to M$ for all $i \in \{ 1,\dots,n \}$. 
Then for every $j \in \{ 0,\dots,n \}$ the \textsl{twisted multi-module} 
\be
{}_{A_{j+1}\dots A_n A_1\dots A_j}M^{\textrm{tw}_{j}}
\ee
by definition has $M^{\textrm{tw}_{j}} := M$ and 
\begin{itemize}
\item
$A_i$-action $\rho_i$ for $i \in \{ j+1, \dots, n \}$, 
\item
$A_k$-action $\rho_k^{\textrm{tw}}$ for $k \in \{ 1, \dots, j \}$. 
\end{itemize}
\end{definition}

Note that for $j=0$ the second condition is empty, and so $M^{\textrm{tw}_0} =M$.
The next lemma generalises the module isomorphism \eqref{eq:thetaiso}.

\begin{lemma}
\label{lem:thetaisomulti}
$\theta_M \colon {}_{A_1\dots A_n}M 
	\to
{}_{A_1\dots A_n}M^{\textrm{tw}_{n}}$ is an isomorphism of multi-modules. 
\end{lemma}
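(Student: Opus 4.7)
The plan is to reduce the claim to the single-algebra statement \eqref{eq:thetaiso} applied componentwise. The first step will be to verify that ${}_{A_1\dots A_n}M^{\textrm{tw}_{n}}$ is in fact a multi-module. By Lemma~\ref{lem:A1A2} this amounts to checking, for each pair $i<j$, the braided commutation relation \eqref{eq:A1A2comp} for the twisted actions $\rho_i^{\textrm{tw}}$ and $\rho_j^{\textrm{tw}}$. I expect the cleanest way to do this is to observe that \eqref{eq:thetaiso} gives
\[
\rho_i^{\textrm{tw}} = \theta_M \circ \rho_i \circ (\id_{A_i} \otimes \theta_M^{-1})
\]
for every $i$, so the $n$ twisted actions on $M$ are obtained from the untwisted actions by conjugation with one and the same invertible morphism $\theta_M$. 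Inserting $\theta_M^{-1}\theta_M = \id_M$ on the central string in both sides of \eqref{eq:A1A2comp} then reduces the compatibility for $\rho_i^{\textrm{tw}},\rho_j^{\textrm{tw}}$ to that for $\rho_i, \rho_j$, which holds by assumption on ${}_{A_1\dots A_n}M$.

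The second step is to check that $\theta_M$ is a map of multi-modules in the sense of Definition~\ref{definition:mapmulti}, i.e.\ that
\[
\theta_M \circ \rho_i = \rho_i^{\textrm{tw}} \circ (\id_{A_i} \otimes \theta_M)
\quad\text{for every } i\in\{1,\dots,n\}\,.
\]
But this is precisely the content of \eqref{eq:thetaiso} applied to the individual $A_i$-module structure on $M$. Finally, $\theta_M$ is invertible in $\Cat{C}$ (with inverse $\theta_M^{-1}$, which by the same argument is also a map of multi-modules in the reverse direction), so it is an isomorphism of multi-modules.

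I do not anticipate any serious obstacle: the work has already been done in \eqref{eq:thetaiso}, and the only mildly subtle point is the first step, where one must be careful that the twist $\theta_M$ used to conjugate is the same across all $i$, so that the braided compatibility between distinct actions is preserved rather than acquiring extra factors of $\theta_{A_i}$. The alternative form \eqref{eq:alternative-rhotw} of $\rho^{\textrm{tw}}$, together with naturality of the braiding, could equally be used if one prefers a direct string-diagrammatic verification in the spirit of the preceding lemma.
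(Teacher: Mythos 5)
Your Step~2 is exactly the paper's one-line proof: the intertwining relation $\theta_M \circ \rho_i = \rho_i^{\textrm{tw}} \circ (\id_{A_i} \otimes \theta_M)$ is read off immediately from $\rho_i^{\textrm{tw}} = \theta_M \circ \rho_i \circ (\id_{A_i} \otimes \theta_M^{-1})$. Your Step~1 — verifying that $M^{\textrm{tw}_n}$ is a multi-module by conjugating the compatibility condition \eqref{eq:A1A2comp} through $\theta_M$ (which passes the braiding $c_{A_j,A_i}$ unharmed since $\theta_M$ lives only on the $M$-strand) — is correct, but the paper does not include it in this proof; it regards that fact as already secured by iterating the unnumbered lemma preceding Definition~\ref{def:twmm}, so your conjugation argument is an equally valid and perhaps more direct alternative route to that preliminary point.
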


\begin{proof}
Since $\rho_i^{\textrm{tw}} = \theta_M \circ \rho_i \circ (1_{A_i} \otimes \theta_M^{-1} )$ for $i \in \{1,\dots,n \}$ it is immediate that $\theta_M \circ \rho_i = \rho_i^{\textrm{tw}} \circ (\id_A \otimes \theta_M)$ for all $i$.
\end{proof}

\medskip

Finally we discuss the cyclic symmetry that will be needed to consistently label line defects in Section~\ref{sec:RTdef}. 
We will first treat the case that all algebras $A_i$ in the multi-module ${}_{A_1\dots A_n}M$ are equal to some $B$, so that we have a $B^{\otimes n}$-module. Then we pass to the general case where  
$A_i = A_{i + k}$ for all $i$ and some divisor~$k$ of~$n$, with indices taken modulo~$n$.

\medskip

Let $B \in \Cat{C}$ be an algebra and write 
	\be
	\mathcal{M} := B^{\otimes n}\text{-mod}_{\Cat{C}}
	\ee	
for the category of $B^{\otimes n}$-modules in $\Cat{C}$. 
We will define an action of the cyclic group $C_n$ on $\mathcal{M}$, that is, we will specify a monoidal functor
\be\label{eq:tw-as-monfun}
	\mathrm{tw} \colon C_n \lra \FEnd(\mathcal M) \, .
\ee 
Here, $C_n$ is understood as a strict monoidal category with only identity morphisms and the group operation as tensor product. The category of endofunctors $\FEnd(\mathcal M)$ is strict monoidal via composition.
For $a \in C_n$ denote by $\rp{a}$ the representative in $\{0,1,\dots,n-1\}$.
Then the action of \eqref{eq:tw-as-monfun} on objects is given by, for $a \in C_n$,
\be\label{eq:tw-as-monfun_obj}
	\mathrm{tw}_a(M) := M^{\mathrm{tw}_{\rp a}} \, , 
\ee
and for a morphism $f \colon M \rightarrow N$ we set $f^{\mathrm{tw}_{ a}} = f \colon M^{\mathrm{tw}_{ a}} \rightarrow N^{\mathrm{tw}_{ a}}$.
To define the monoidal structure, we set $\sigma(a,b) = \tfrac{1}{n}( \rp a + \rp b - \rp{(a+b)} ) \in \{0,1\}$ for all $a,b \in C_n$ and then define the natural isomorphisms 
\be
	\tau_{a,b} \colon \mathrm{tw}_a \circ \mathrm{tw}_b \longrightarrow \mathrm{tw}_{a+b}
	\, , \quad 
	(\tau_{a,b})_M = (\theta_M^{-1})^{\sigma(a,b)} \, .
\ee
By Lemma~\ref{lem:thetaisomulti}, $(\tau_{a,b})_M$ is indeed a $B^{\otimes n}$-module homomorphism.

\begin{lemma}\label{lem:monfun-Cn-EndM}
$(\mathrm{tw},\tau) \colon  C_n \to \FEnd(\mathcal M)$ is a monoidal functor.
\end{lemma}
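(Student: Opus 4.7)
The plan is to verify the three ingredients of a monoidal functor in turn: that each $\mathrm{tw}_a$ is an endofunctor of $\mathcal{M}$, that each $\tau_{a,b}$ is a well-defined natural isomorphism, and that the coherence axioms hold.

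For the first point, since all algebras in the underlying multi-module structure are equal to $B$, cyclic relabelling of positions preserves the list $(B,\dots,B)$, so Definition~\ref{def:twmm}, used in its iterated form via repeated application of Lemma~\ref{lem:A1A2}, shows that the twisted multi-module $M^{\mathrm{tw}_{\rp a}}$ is again an object of $\mathcal{M}$. A $B^{\otimes n}$-module map $f\colon M\to N$ remains such a map after twisting: for positions carrying an untwisted action this is the hypothesis on $f$, while for positions carrying $\rho_i^{\mathrm{tw}} = \theta_M\circ\rho_i\circ(1_B\otimes\theta_M^{-1})$ it follows from the hypothesis on $f$ combined with naturality of $\theta$. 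Preservation of identities and composition is immediate.

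For the second point, we need to show that $(\theta_M^{-1})^{\sigma(a,b)}$ is a module map $\mathrm{tw}_a(\mathrm{tw}_b(M)) \to \mathrm{tw}_{a+b}(M)$. When $\sigma(a,b)=0$ one has $\rp a+\rp b=\rp{(a+b)}$, and tracing through shifts shows that the two modules coincide at every position with no double-twist occurring, so the identity suffices. When $\sigma(a,b)=1$ one has $\rp a+\rp b = n+\rp{(a+b)}$; a case analysis over whether each of the two shifts wraps around shows that at every position the action in $\mathrm{tw}_a(\mathrm{tw}_b(M))$ carries exactly one additional twist compared with $\mathrm{tw}_{a+b}(M)$, whence Lemma~\ref{lem:thetaisomulti} provides $\theta_M^{-1}$ as the required multi-module map. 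Naturality in $M$ reduces to the naturality of $\theta$ together with the fact that $\mathrm{tw}_a$ acts as the identity on underlying morphisms.

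Finally, the unit axiom is immediate from $\sigma(0,a)=\sigma(a,0)=0$, while the associativity hexagon, evaluated at $M$, collapses to the scalar identity
\bes
(\theta_M^{-1})^{\sigma(a+b,c)+\sigma(a,b)} = (\theta_M^{-1})^{\sigma(a,b+c)+\sigma(b,c)},
\ees
which holds because both exponents evaluate to $\tfrac{1}{n}(\rp a+\rp b+\rp c-\rp{(a+b+c)})$ by a short computation. The main obstacle is the position-by-position bookkeeping in the second step when $\sigma(a,b)=1$: one must confirm that the combinatorics of ``twisted'' versus ``untwisted'' actions produced by applying $\mathrm{tw}_{\rp b}$ followed by $\mathrm{tw}_{\rp a}$ always differs from the one-step $\mathrm{tw}_{\rp{(a+b)}}$ construction by precisely one extra twist at each position, so that a single factor of $\theta_M^{-1}$ suffices uniformly.
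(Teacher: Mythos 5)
Your proof is correct and follows essentially the same strategy as the paper: establish that each $\mathrm{tw}_a$ is an endofunctor, invoke Lemma~\ref{lem:thetaisomulti} to see that $(\tau_{a,b})_M$ is a module map, and verify the hexagon via the scalar identity $\sigma(a,b+c)+\sigma(b,c)=\sigma(a+b,c)+\sigma(a,b)$. You are considerably more explicit than the paper about the position-by-position bookkeeping (the case split on $\sigma(a,b)\in\{0,1\}$), which the paper delegates to Lemma~\ref{lem:thetaisomulti} and the preceding remark without spelling it out; that extra detail is the substance of the ``remains to check'' that the paper compresses.
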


\begin{proof}
It remains to check the hexagon diagram for the coherence isomorphisms, which in the present case boils down to the identity 
\be
	\theta_M^{-\sigma(a,b+c)} \circ
	\theta_M^{-\sigma(b,c)} 
	=
	\theta_M^{-\sigma(a+b,c)} \circ
	\theta_M^{-\sigma(a,b)} \, ,
\ee
for all $a,b,c \in C_n$ and $M \in \mathcal{M}$, which in turn is easily checked.
\end{proof}

Thanks to the monoidal functor in Lemma~\ref{lem:monfun-Cn-EndM} we can talk about $C_n$-equivariant objects in $\mathcal{M}$. By definition, these are tuples $(M , \{ \varphi_a \}_{a \in C_n})$ where $M \in \mathcal{M}$ and the $\varphi_a \colon \mathrm{tw}_a(M) \to M$ are module isomorphisms. The $\varphi_a$ must satisfy $\varphi_0 = \id_M$ and commutativity of 
\be
\begin{tikzpicture}[
			     baseline=(current bounding box.base), 
			     descr/.style={fill=white,inner sep=3.5pt}, 
			     normal line/.style={->}
			     ] 
\matrix (m) [matrix of math nodes, row sep=3.5em, column sep=4em, text height=1.5ex, text depth=0.1ex] {%
\mathrm{tw}_a\big(\mathrm{tw}_b(M)\big) & \mathrm{tw}_a(M)
\\
\mathrm{tw}_{a+b}(M)   & M
\\
};
\path[font=\footnotesize] (m-1-1) edge[->] node[above] {$\mathrm{tw}_a(\varphi_b)$} (m-1-2);
\path[font=\footnotesize] (m-1-1) edge[->] node[left] {$(\tau_{a,b})_M$} (m-2-1);
\path[font=\footnotesize] (m-2-1) edge[->] node[below] {$\varphi_{a+b}$} (m-2-2);
\path[font=\footnotesize] (m-1-2) edge[->] node[right] {$\varphi_{a}$} (m-2-2);
\end{tikzpicture}
\ee
For more details on equivariantisation we refer e.\,g.\ to~\cite[Sect.\,2.7]{EGNO-book}. 

Let $g \in C_n$ be the generator such that $\rp g=1$. It is immediate from the above diagram that all $\varphi_a$ are determined by $\varphi_g$ and that the following lemma holds.

\begin{lemma}
\label{lem:Cn-equiv-via-gen}
Giving a $C_n$-equivariant object in $\mathcal{M}$ is the same as giving a pair $(M,\varphi)$ with $\varphi \colon M^{\mathrm{tw}_1} \to M$ a module isomorphism such that $\varphi^n = \theta_M^{-1}$.
\end{lemma}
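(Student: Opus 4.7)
The plan is to exhibit a bijection between the two kinds of data, using crucially that the generator $g \in C_n$ with $\rp{g}=1$ determines the full equivariant structure through the hexagon condition, and that the $\sigma$-cocycle is concentrated on a single transition $(n-1)g \mapsto 0$.

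For the forward direction, I start with $(M, \{\varphi_a\}_{a \in C_n})$ and set $\varphi := \varphi_g$. The hexagon compatibility, written out with $a=g$ and $b=kg$, gives the recursion
\[
\varphi_g \circ \mathrm{tw}_g(\varphi_{kg}) \;=\; \varphi_{(k+1)g} \circ (\tau_{g,kg})_M \, .
\]
Since $\mathrm{tw}_g$ acts as the identity on morphisms and $\sigma(g,kg) = \tfrac{1}{n}(1 + k - \rp{(k+1)g})$ vanishes for $k = 0,1,\dots,n-2$, induction on $k$ yields $\varphi_{kg} = \varphi^k$ (as a $\Cat{C}$-morphism $M \to M$, legitimate because all $M^{\mathrm{tw}_j}$ share the same underlying object). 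For $k=n-1$ one has $\sigma(g,(n-1)g)=1$, so $(\tau_{g,(n-1)g})_M = \theta_M^{-1}$, and since $\varphi_{ng} = \varphi_0 = \id_M$ the recursion becomes $\id_M \circ \theta_M^{-1} = \varphi \circ \varphi^{n-1}$, that is, $\varphi^n = \theta_M^{-1}$.

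For the reverse direction, given $(M,\varphi)$ with $\varphi^n = \theta_M^{-1}$, I define $\varphi_{kg} := \varphi^k$ for $k=0,\dots,n-1$, reinterpreted as a map $M^{\mathrm{tw}_k} \to M$. The interpretation is justified by Lemma~\ref{lem:thetaisomulti} combined with monoidality of $\mathrm{tw}$: each $\mathrm{tw}_g^j(\varphi)$ is a module map $M^{\mathrm{tw}_{j+1}} \to M^{\mathrm{tw}_{j}}$, and their composition gives $\varphi^k \colon M^{\mathrm{tw}_k} \to M$. It remains to verify the hexagon $\varphi_{jg} \circ \mathrm{tw}_{jg}(\varphi_{kg}) = \varphi_{(j+k)g} \circ (\tau_{jg,kg})_M$ for all $j,k \in \{0,\dots,n-1\}$: since $\mathrm{tw}$ is trivial on morphisms, the left-hand side is simply $\varphi^{j+k}$, while the right-hand side equals $\varphi^{\rp{(j+k)g}} \circ \theta_M^{-\sigma(jg,kg)}$. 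If $j+k < n$ this is $\varphi^{j+k}$; if $j+k \geq n$, $\sigma = 1$ and one uses the hypothesis $\theta_M^{-1} = \varphi^n$ to recover $\varphi^{j+k-n} \circ \varphi^n = \varphi^{j+k}$.

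The two constructions are mutually inverse by design: starting from $\{\varphi_a\}$, extracting $\varphi_g$ and re-extending produces $\varphi^k = \varphi_{kg}$ by the forward calculation. The only subtle point, and the one that deserves care in writing up, is the bookkeeping between $\Cat{C}$-level morphisms and multi-module morphisms, together with the verification that the single $\sigma=1$ jump is exactly compensated by the relation $\varphi^n = \theta_M^{-1}$; once the $\sigma$-values are computed there is no remaining obstruction.
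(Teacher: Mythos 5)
Your proof is correct and follows precisely the argument that the paper indicates when it writes ``it is immediate from the above diagram that all $\varphi_a$ are determined by $\varphi_g$'': you extract $\varphi=\varphi_g$, use the hexagon with $a=g$, $b=kg$ and the computed $\sigma$-values to get $\varphi_{kg}=\varphi^k$ and $\varphi^n=\theta_M^{-1}$, and check conversely that the single $\sigma=1$ jump is absorbed by the relation. Your write-up of the bookkeeping between $\Cat{C}$-morphisms and module morphisms (via the iterated twist functors and the fact that $(\tau_{a,b})_M=\id$ whenever $\rp a+\rp b<n$) is exactly the point the paper leaves implicit.
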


\medskip

We now return to general $(A_1,\dots,A_n)$-multi-modules. Let $M$ be such a module and suppose that the $A_i$ are periodic in the sense that $A_i = A_{i+k}$ for some $k>0$ and all~$i$, with indices taken modulo $n$. 
If we write $B = A_1 \otimes \cdots \otimes A_k$, then~$M$ is a $B^{\otimes n/k}$-module and can be equipped with a $C_{n/k}$-equivariant structure. 
In Section~\ref{sec:RTdef} we will need the case that $k>0$ is minimal, so that the cyclic symmetry is maximal. 

Using the simplified description in Lemma~\ref{lem:Cn-equiv-via-gen}, we finally arrive at the following definition.

\begin{definition}
\label{def:symmetricmodule}
Let $A_1,\dots,A_n$ be algebras and let $k\in \Z_+$ divide~$n$, such that $A_i = A_{i+k}$ for all~$i$ (where $i+k$ is taken modulo~$n$). 
A \textsl{$k$-cyclic $(A_1,\dots ,A_n)$-multi-module} is a pair $(M,\varphi)$ where~$M$ is an $(A_1,\dots ,A_n)$-multi-module and $\varphi \colon M^{\mathrm{tw}_k} \to M$ is a module isomorphism such that $\varphi^{n/k} = \theta_M^{-1}$.
\end{definition}

\begin{example}
\begin{enumerate}
\item
Let $(A,\mu)$ be a commutative algebra as in Example~\ref{ex:Amultmod}. 
Then ${}_{AA}A$ is 1-cyclic iff $\theta_A = 1_A$. 
Indeed, a cyclic structure on ${}_{AA}A$ amounts to a module isomorphism 
$\varphi \colon {}_{AA}A^{\textrm{tw}} \to {}_{AA}A$, 
i.\,e.
\be
\label{eq:Asys}
\begin{tikzpicture}[very thick,scale=0.75,color=green!50!black, baseline]
\draw (0,-1) node[below] (X) {};
\draw[color=green!50!black] (-0.5,-1) node[below] (X) {};
\draw (0,1) node[right] (Xu) {};
\draw (0,-1) -- (0,1); 
\fill[color=green!50!black] (0,-0.25) circle (2.9pt) node[right] (meet) {{\scriptsize$\mu$}};
	\fill[color=black] (0,0.6) circle (2.9pt) node[right] (meet2) {{\scriptsize$\varphi$}};
\draw[color=green!50!black] (-0.5,-1) .. controls +(0,0.25) and +(-0.25,-0.25) .. (0,-0.25);
\end{tikzpicture} 
=
\begin{tikzpicture}[very thick,scale=0.75,color=green!50!black, baseline]
\draw (0,-1) node[below] (X) {};
\draw[color=green!50!black] (-0.5,-1) node[below] (X) {};
\draw (0,1) node[right] (Xu) {};
\draw (0,-1) -- (0,1); 
\fill[color=green!50!black] (0,0.6) circle (2.9pt) node[right] (meet) {{\scriptsize$\mu$}};
	\fill[color=black] (0,-0.25) circle (2.9pt) node[right] (meet2) {{\scriptsize$\varphi$}};
\draw[color=green!50!black] (-0.5,-1) .. controls +(0,0.5) and +(-0.5,-0.5) .. (0,0.6);
\draw[color=green!50!black] (-0.5,-1) .. controls +(0,0.5) and +(-0.5,-0.5) .. (0,0.6);
\end{tikzpicture} 
\quad\text{and}\quad
\begin{tikzpicture}[very thick,scale=0.75,color=green!50!black, baseline]
\draw (0,-1) node[below] (X) {};
\draw[color=green!50!black] (-0.5,-1) node[below] (X) {};
\draw (0,1) node[right] (Xu) {};
\draw (0,-1) -- (0,1); 
\fill[color=green!50!black] (0,-0.25) circle (2.9pt) node[right] (meet) {{\scriptsize$\mu^{\textrm{tw}}$}};
	\fill[color=black] (0,0.6) circle (2.9pt) node[right] (meet2) {{\scriptsize$\varphi$}};
\draw[color=green!50!black] (-0.5,-1) .. controls +(0,0.25) and +(-0.25,-0.25) .. (0,-0.25);
\end{tikzpicture} 
=
\begin{tikzpicture}[very thick,scale=0.75,color=green!50!black, baseline]
\draw (0,-1) node[below] (X) {};
\draw[color=green!50!black] (-0.5,-1) node[below] (X) {};
\draw (0,1) node[right] (Xu) {};
\draw (0,-1) -- (0,1); 
\fill[color=green!50!black] (0,0.6) circle (2.9pt) node[right] (meet) {{\scriptsize$\mu$}};
	\fill[color=black] (0,-0.25) circle (2.9pt) node[right] (meet2) {{\scriptsize$\varphi$}};
\draw[color=green!50!black] (-0.5,-1) .. controls +(0,0.5) and +(-0.5,-0.5) .. (0,0.6);
\end{tikzpicture} 
\, . 
\ee
The first equation states that~$\varphi$ is an ${}_A A$-automorphism. 
Using this and postcomposing the second equation of~\eqref{eq:Asys} with $\varphi^{-1}$ we compute 
\be
\begin{tikzpicture}[very thick,scale=0.75,color=green!50!black, baseline]
\draw (0,-1) node[below] (X) {};
\draw[color=green!50!black] (-0.5,-1) node[below] (X) {};
\draw (0,-1) -- (0,1); 
\draw[color=green!50!black] (-0.5,-1) .. controls +(0,0.25) and +(-0.25,-0.25) .. (0,0.1);
\fill (0,0.1) circle (2.9pt) node[right] (meet) {{\scriptsize$\mu$}};
\end{tikzpicture} 
\stackrel{\eqref{eq:Asys}}{=}
\begin{tikzpicture}[very thick,scale=0.75,color=green!50!black, baseline]
\draw (0,-1) node[below] (X) {};
\draw[color=green!50!black] (-0.5,-1) node[below] (X) {};
\draw (0,-1) -- (0,1); 
\draw[color=green!50!black] (-0.5,-1) .. controls +(0,0.25) and +(-0.25,-0.25) .. (0,0.1);
\fill (0,0.1) circle (2.9pt) node[right] (meet) {{\scriptsize$\mu^{\textrm{tw}}$}};
\end{tikzpicture} 
\;
\overset{\eqref{eq:alternative-rhotw}}=
\begin{tikzpicture}[very thick,scale=0.75,color=green!50!black, baseline]
\draw[color=green!50!black] (-0.85,0.5) -- (-0.85,1); 
\fill[color=green!50!black] (-0.85,0.5) circle (2.5pt) node[left] (mult1) {{\scriptsize$\mu$}};
\draw[color=green!50!black] (-1.2,0.2) .. controls +(0,0.25) and +(0,0) .. (-0.85,0.5);
\draw[color=green!50!black] (-0.5,0.2) .. controls +(0,0.25) and +(0,0) .. (-0.85,0.5);
\draw[color=green!50!black] (-1.2,-0.4) .. controls +(0,-0.25) and +(0,0.25) .. (-0.5,-1);
\draw[color=green!50!black] (-0.5,-0.4) .. controls +(0,0.25) and +(0,-0.25) .. (-1.2,0.2);
\draw[color=white, line width=4pt] (-1.2,-0.4) .. controls +(0,0.25) and +(0,-0.25) .. (-0.5,0.2);
\draw[color=green!50!black] (-1.2,-0.4) .. controls +(0,0.25) and +(0,-0.25) .. (-0.5,0.2);
\draw[color=white, line width=4pt] (-0.5,-0.4) .. controls +(0,-0.25) and +(0,0.25) .. (-1.2,-1);
\draw[color=green!50!black] (-0.5,-0.4) .. controls +(0,-0.25) and +(0,0.25) .. (-1.2,-1);
\fill (-1.05,-0.8) circle (2.9pt) node[left] (meet2) {{\scriptsize$\theta^{-1}_A$}};
\end{tikzpicture} 
\,
\overset{\text{$A$ comm.}}=
\begin{tikzpicture}[very thick,scale=0.75,color=green!50!black, baseline]
\draw[color=green!50!black] (-0.85,0.1) -- (-0.85,1); 
\fill[color=green!50!black] (-0.85,0.1) circle (2.5pt) node[right] (mult1) {{\scriptsize$\mu$}};
\draw[color=green!50!black] (-1.2,-1) .. controls +(0,1) and +(0,0) .. (-0.85,0.1);
\draw[color=green!50!black] (-0.5,-1) .. controls +(0,1) and +(0,0) .. (-0.85,0.1);
\fill (-1.18,-0.6) circle (2.9pt) node[left] (meet2) {{\scriptsize$\theta^{-1}_A$}};
\end{tikzpicture} 
\quad .
\ee
Precomposing with $1_A \otimes 
\begin{tikzpicture}[thick,scale=0.4,color=green!50!black, baseline=-0.2cm]
\draw (0,-0.5) node[Odot] (D) {}; 
\draw (D) -- (0,0.3); 
\end{tikzpicture} 
$ then proves the claim. 
\item 
Let~$A$ be a commutative algebra with $\theta_A = 1_A$, and let~$M$ be an $A$-module. 
Then ${}_{A\dots A}M$ canonically is a multi-module for any number of $A$-factors. 
Furthermore, ${}_{A\dots A}M$ has a canonical cyclic structure (meaning one with $\varphi = 1_M$) iff ${}_A M$ is \textsl{local}, i.\,e.
\be
\begin{tikzpicture}[very thick,scale=0.75,color=blue!50!black, baseline]
\draw[color=blue!50!black] (-0.85,0.5) -- (-0.85,1); 
\draw[color=green!50!black] (-1.2,0.2) .. controls +(0,0.25) and +(0,0) .. (-0.85,0.5);
\draw[color=blue!50!black] (-0.5,0.2) .. controls +(0,0.25) and +(0,0) .. (-0.85,0.5);
\draw[color=blue!50!black] (-1.2,-0.4) .. controls +(0,-0.25) and +(0,0.25) .. (-0.5,-1);
\draw[color=green!50!black] (-0.5,-0.4) .. controls +(0,0.25) and +(0,-0.25) .. (-1.2,0.2);
\draw[color=white, line width=4pt] (-1.2,-0.4) .. controls +(0,0.25) and +(0,-0.25) .. (-0.5,0.2);
\draw[color=blue!50!black] (-1.2,-0.4) .. controls +(0,0.25) and +(0,-0.25) .. (-0.5,0.2);
\draw[color=white, line width=4pt] (-0.5,-0.4) .. controls +(0,-0.25) and +(0,0.25) .. (-1.2,-1);
\draw[color=green!50!black] (-0.5,-0.4) .. controls +(0,-0.25) and +(0,0.25) .. (-1.2,-1);
\fill[color=black] (-0.85,0.5) circle (2.9pt) node[left] (mult1) {};
\draw (-0.5,-1) node[below] (X) {{\scriptsize$M$}};
\draw[color=green!50!black] (-1.2,-1) node[below] (A1) {{\scriptsize$A$}};
\end{tikzpicture} 
=
\begin{tikzpicture}[very thick,scale=0.75,color=blue!50!black, baseline]
\draw[color=green!50!black] (-1,-1) node[below] (A1) {};
\draw[color=green!50!black] (-0.5,-1) node[below] (A2) {};
\draw (0,1) node[right] (Xu) {};
\draw (0,-1) -- (0,1); 
\draw[color=green!50!black] (-1,-1) .. controls +(0,0.5) and +(-0.5,-0.5) .. (0,0.5);
\draw (0,-1) node[below] (X) {{\scriptsize$M$}};
\draw[color=green!50!black] (-1,-1) node[below] (A1) {{\scriptsize$A$}};
\fill[color=black] (0,0.5) circle (2.9pt) node[right] (meet2) {};
\end{tikzpicture} 
\, . 
\ee
This follows along the same lines as part (i). 
\item 
Let~$A$ be any algebra in~$\mathcal C$, and let $M,N$ be $A$-modules. 
Then ${}_{A_{1}A_{2}}(M \otimes N)$ is a multi-module with component actions 
\be
\label{eq:actions-M}
\begin{tikzpicture}[very thick,scale=0.75,color=blue!50!black, baseline]
\draw[color=green!50!black] (-1,-1) node[below] (A1) {};
\draw[color=green!50!black] (-0.5,-1) node[below] (A2) {};
\draw (0,-1) -- (0,1); 
\draw (0.75,-1) -- (0.75,1); 
\draw[color=green!50!black] (-1,-1) .. controls +(0,0.5) and +(-0.5,-0.5) .. (0,0.5);
\draw (0,-1) node[below] (M) {{\scriptsize$M$}};
\draw (0.75,-1) node[below] (M) {{\scriptsize$N$}};
\draw[color=green!50!black] (-1,-1) node[below] (A1) {{\scriptsize$A_{1}$}};
\fill[color=black] (0,0.5) circle (2.9pt) node[right] (meet2) {};
\end{tikzpicture} 
\, , \quad
\begin{tikzpicture}[very thick,scale=0.75,color=blue!50!black, baseline]
\draw[color=green!50!black] (-1,-1) node[below] (A1) {};
\draw[color=green!50!black] (-0.5,-1) node[below] (A2) {};
\draw (0,-1) -- (0,1); 
\draw (0.75,-1) -- (0.75,1); 
\draw[color=white, line width=4pt] (-1,-1) .. controls +(0,0.5) and +(-0.5,-0.5) .. (0.75,0.5);
\draw[color=green!50!black] (-1,-1) .. controls +(0,0.5) and +(-0.5,-0.5) .. (0.75,0.5);
\draw (0,-1) node[below] (M) {{\scriptsize$M$}};
\draw (0.75,-1) node[below] (M) {{\scriptsize$N$}};
\draw[color=green!50!black] (-1,-1) node[below] (A1) {{\scriptsize$A_{2}$}};
\fill[color=black] (0.75,0.5) circle (2.9pt) node[right] (meet2) {};
\end{tikzpicture} 
\, ,
\ee
where  $A_1= A_2 = A$.
Furthermore, ${}_{AA}(M \otimes M)$ has a cyclic structure with isomorphism 
\be
\varphi
\; :=  \; 
(1_{M} \otimes \theta_{M}^{-1}) \circ c_{M,M}^{-1}  
\colon 
\quad
{}_{AA}(M \otimes M)^{\textrm{tw}_1} \lra {}_{AA}(M \otimes M) \ .
\ee
Indeed, 
$\varphi$ 
is clearly an isomorphism.  To see that 
$\varphi$ 
 map of multi-modules  according to Definition \ref{definition:mapmulti},
we need to see that it intertwines the two actions of ${}_{AA}(M \otimes M)^{\textrm{tw}_1}$ with those of ${}_{AA}(M \otimes M)$ from \eqref{eq:actions-M}. 
This in turn is a straightforward computation in string diagram notation.
\end{enumerate}
\end{example}

\section{Bordisms with defects in three dimensions}
\label{sec:defectTQFTs}

In this section we review 
the category of 3-dimensional bordisms with defects which we will use in Section~\ref{sec:RTdef} to extend the Reshetikhin-Turaev TQFT to line and surface defects. We specialise the $n$-dimensional setup in \cite{CRS1} to $n=3$, which yields the bordism category already used in \cite{CMS}.
We first describe a collection of sets and maps called 3-dimensional defect data, and then turn to the stratified manifolds decorated by these data from which the bordism category is built.

\medskip

A \textsl{set of 3-dimensional defect data} (\textsl{defect data} for short) is a tuple 
\be
	\D = \big(D_3,D_2,D_1;s,t,j\big) \, . 
\ee
Here $D_3, D_2, D_1$ are sets which will label strata of the corresponding dimension. 
The remaining entries $s,t,j$ are called \textsl{source}, \textsl{target} and \textsl{junction map}, respectively, for reasons that will be clear when we label stratified manifolds (cf.\ Figure~\ref{fig:local-model-2mf}). 
The source and target maps are
\be
	s,t \colon D_2 \times \{ \pm \} \longrightarrow D_3 \, ,
\ee
and 	they must satisfy, for all $f \in D_2$ and $\eps \in \{ \pm \}$,
\be
	s(f,\eps) = t(f,-\eps) \, .
\ee
The junction map (which is called ``folding map''~$f$ in \cite{CMS}) 
is
\be\label{eq:junction}
j \colon D_1  \times \{ \pm \}  \longrightarrow D_3 \sqcup \bigsqcup_{m\in\Z_{+}} P_m/C_m
\,,\qquad 
P_m \subset \big(D_2 \times \{ \pm \}\big)^m \ .
\ee
The subsets $P_m$ in the target of $j$ are defined by the condition that source and target maps between entries match:
\be
P_m = 
\big\{\, (d_1,d_2,\dots,d_m)  \,\big|\, s(d_j) = t(d_{j+1})
\text{ for } j \in \{1,\dots,m\} \,\big\} \, ,
\ee
and where we took $d_{m+1}:=d_1$.
When changing the sign argument, the value of~$j$ behaves as follows: 
\be
j(l,+) = \big( (f_1,\eps_1) ,\dots, (f_m,\eps_m) \big)
	\quad \Longleftrightarrow \quad
	j(l,-) = \big( (f_m,-\eps_m) ,\dots, (f_1,-\eps_1) \big) \, .
\ee
The change of the sign argument $\pm$ of $s,t,j$ will later describe orientation reversal. 

\medskip

Next we define 3-dimensional $\D$-decorated bordisms. In short, these are stratified manifolds with parametrised boundaries decorated by a set of defect data~$\D$. 
All our manifolds will be smooth and oriented. 

A \textsl{stratified $n$-manifold $M$ with boundary} is an oriented topological $n$-manifold together with a filtration into submanifolds $M = F_n \supset F_{n-1} \supset \cdots \supset F_0 \supset F_{-1} = \emptyset$, subject to conditions for which we refer to \cite[Sect.\,2.1]{CMS} or \cite[Sect.\,2.1]{CRS1}, and such that $F_j \setminus F_{j-1}$ is an oriented smooth manifold for each~$j$. 
One of the conditions is that $\partial M$ is a stratified $(n{-}1)$-manifold with induced orientations, and the strata have to meet the boundary transversally.
A connected component of $F_j \setminus F_{j-1}$ is called a \textsl{$j$-stratum} of~$M$.

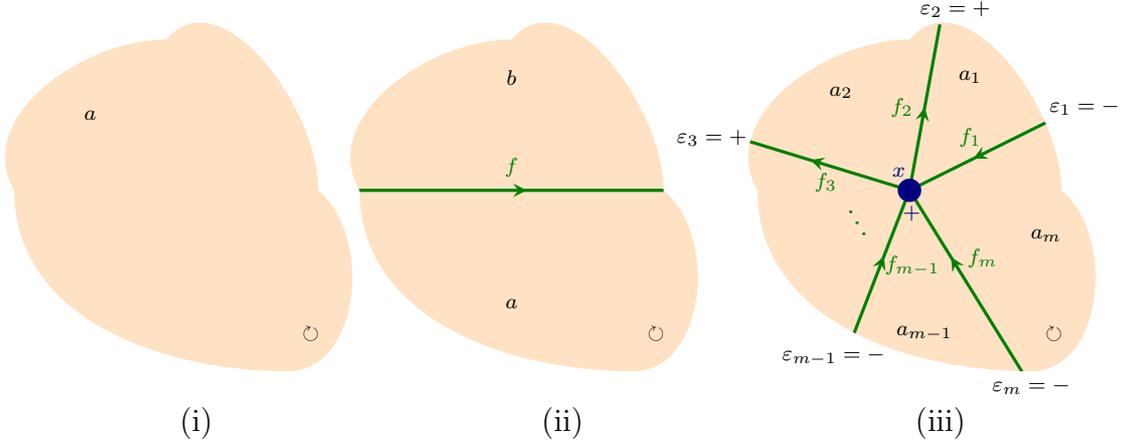
\begin{figure}[t]
\centering
\begin{tabular}{ccc}
\hspace{-0.75cm}
\tikzzbox{\begin{tikzpicture}[very thick,scale=2,color=blue!50!black, baseline=0]
\fill[orange!23] 	(1,0) 
	to[out=90, in=50] (0,1) 
	to[out=180, in=120] (-1,0) 
	to[out=270, in=180] (0.8,-1.2)
	to[out=0, in=-40] (1,0);
\clip 	(1,0) 
	to[out=90, in=50] (0,1) 
	to[out=180, in=120] (-1,0) 
	to[out=270, in=180] (0.8,-1.2)
	to[out=0, in=-40] (1,0);
\fill[color=black] (-0.5,0.5) node (X) {{\scriptsize$a$}};
\fill[color=black] (0.95,-0.95) node (X) {{\scriptsize$\circlearrowright$}};
\end{tikzpicture}}
& 
\hspace{-1.3cm}
\tikzzbox{\begin{tikzpicture}[very thick,scale=2,color=blue!50!black, baseline=0]
\fill[orange!23] 	(1,0) 
	to[out=90, in=50] (0,1) 
	to[out=180, in=120] (-1,0) 
	to[out=270, in=180] (0.8,-1.2)
	to[out=0, in=-40] (1,0);
\clip 	(1,0) 
	to[out=90, in=50] (0,1) 
	to[out=180, in=120] (-1,0) 
	to[out=270, in=180] (0.8,-1.2)
	to[out=0, in=-40] (1,0);
\draw[color=black!50!green, >=stealth, decoration={markings, mark=at position 0.7 with {\arrow{>}},}, postaction={decorate}] 
		(-2,0) -- (1,0); 
\fill[color=black!50!green] (0,0.15) node (X) {{\scriptsize$f$}};
\fill[color=black] (0,0.75) node (X) {{\scriptsize$b$}};
\fill[color=black] (0,-0.75) node (X) {{\scriptsize$a$}};
\fill[color=black] (0.95,-0.95) node (X) {{\scriptsize$\circlearrowright$}};
\end{tikzpicture}}
& 
\hspace{-1.25cm}
\tikzzbox{\begin{tikzpicture}[very thick,scale=2,color=blue!50!black, baseline=0]
\fill[orange!23] 	(1,0) 
	to[out=90, in=50] (0,1) 
	to[out=180, in=120] (-1,0) 
	to[out=270, in=180] (0.8,-1.2)
	to[out=0, in=-40] (1,0);
\fill[color=black] (1.15,0.55) node (X) {{\scriptsize$\eps_1=-$}};
\fill[color=black] (0.3,1.2) node (X) {{\scriptsize$\eps_2=+$}};
\fill[color=black] (-1.3,0.35) node (X) {{\scriptsize$\eps_3=+$}};
\fill[color=black] (-0.5,-1.1) node (X) {{\scriptsize$\eps_{m-1}=-$}};
\fill[color=black] (0.8,-1.3) node (X) {{\scriptsize$\eps_m=-$}};
\clip 	(1,0) 
	to[out=90, in=50] (0,1) 
	to[out=180, in=120] (-1,0) 
	to[out=270, in=180] (0.8,-1.2)
	to[out=0, in=-40] (1,0);
\draw[color=black!50!green, >=stealth, decoration={markings, mark=at position 0.25 with {\arrow{<}},}, postaction={decorate}] 	(0,0) -- (2,1); 
\draw[color=black!50!green, >=stealth, decoration={markings, mark=at position 0.5 with {\arrow{>}},}, postaction={decorate}] 	(0,0) -- (-1.3,0.4); 
\draw[color=black!50!green, >=stealth, decoration={markings, mark=at position 0.5 with {\arrow{>}},}, postaction={decorate}] 	(0,0) -- (0.2,1.1); 
\draw[color=black!50!green, >=stealth, decoration={markings, mark=at position 0.4 with {\arrow{<}},}, postaction={decorate}] 	(0,0) -- (-0.5,-1.3); 
\draw[color=black!50!green, >=stealth, decoration={markings, mark=at position 0.4 with {\arrow{<}},}, postaction={decorate}] 	(0,0) -- (0.8,-1.3); 
\fill[color=black!50!green] (0.4,0.35) node (X) {{\scriptsize$f_1$}};
\fill[color=black!50!green] (-0.05,0.55) node (X) {{\scriptsize$f_2$}};
\fill[color=black!50!green] (-0.55,0.05) node (X) {{\scriptsize$f_3$}};
\fill[color=black!50!green] (0.02,-0.5) node (X) {{\scriptsize$f_{m-1}$}};
\fill[color=black!50!green] (0.48,-0.45) node (X) {{\scriptsize$f_m$}};
\fill[color=black] (0.4,0.75) node (X) {{\scriptsize$a_1$}};
\fill[color=black] (-0.45,0.65) node (X) {{\scriptsize$a_2$}};
\fill[color=black!50!green] (-0.35,-0.2) node (X) {{\small$\rotatebox{120}{\dots}$}};
\fill[color=black] (0.1,-0.95) node (X) {{\scriptsize$a_{m-1}$}};
\fill[color=black] (0.9,-0.3) node (X) {{\scriptsize$a_m$}};
\fill[color=black] (0.95,-0.95) node (X) {{\scriptsize$\circlearrowright$}};
\fill (0,0) circle (2.2pt) node[above, color=black] (X) {};
\fill (-0.07,0.12) node (X) {{\scriptsize$x$}};
\fill (0.015,-0.15) node (X) {{\scriptsize$+$}};
\end{tikzpicture}}
\\
(i) & (ii) & (iii)
\\
\end{tabular}
\caption{Local models for decorated stratified 3-manifolds are cylinders over the types of decorated stratified 2-manifolds shown. 
Here, $m \in \Z_{\geqslant 0}$, $a,b,a_1,\dots, a_m \in D_3$, $f,f_1,\dots,f_m \in D_2$, and $x \in D_1$. 
For model (ii) we must have $a = s(f,+)=t(f,-)$ and $b=t(f,+)=s(f,-)$. 
For model (iii) the number of 2-strata can vary, as well as their orientations. 
The label of each 2-stratum has to have as source and target the labels on the two neighbouring 3-strata, e.\,g.~$s(f_1,-)=a_m$. 
Finally, the junction map has to match the neighbouring 2-strata via $j(x,+) = ((f_1,\eps_1),\dots,(f_m,\eps_m))$. 
For the relation of the orientations in (iii) and of the strata in the corresponding cylinder, see \cite[Ex.\,2.3]{CRS1}.
}
\label{fig:local-model-2mf}
\end{figure}

Let $\D = (D_3,D_2,D_1;s,t,j)$ be a set of defect data. We first describe the labelling for stratified closed surfaces, then for 3-dimensional bordisms. 
For a surface, 
each $j$-stratum, $j \in \{0,1,2\}$, is labelled by an element from $D_{j+1}$. The index shift arises as these 2-manifolds will be used to parametrise boundaries of 3-manifolds. 
The labelling must be chosen such that each point has a neighbourhood isomorphic (as an oriented stratified decorated manifold\footnote{The notion of an isomorphism of stratified manifolds is slightly delicate. Given stratified manifolds $M = F_n \supset \cdots \supset F_0$ and $M' = F'_n \supset \cdots \supset F'_0$, an isomorphism $f \colon M \to M'$ is, first of all, a homeomorphism $M \to M'$ such that $f(F_j) = F'_j$. Secondly, $f$, restricted to $(F_j \setminus F_{j-1}) \to (F'_j \setminus F'_{j-1})$, is an orientation preserving diffeomorphism. For more details, see e.\,g.\ \cite[Ch.\,1]{Pflaum} and \cite[Sect.\,2.1]{CRS1}.

The reason not to restrict to stratifications where~$M$ itself is already a smooth manifold and where~$f$ is a diffeomorphism on all of~$M$ is illustrated in Figure~\ref{fig:intro-linedef}: 
There is no diffeomorphism between these two stratifications (embedded in $\R^3$), since the differential $df$ on the 1-stratum~$M$ would need to preserve the line~$M$ and the planes $A_2$ and $A_3$, and so cannot map the plane $A_1$ from its location on the left to that on the right.
})
to one of the three local models in Figure~\ref{fig:local-model-2mf}.
For 3-manifolds we will only consider stratifications without 0-strata (see Remark~\ref{rem:no-0-strata} below). Each $j$-stratum, $j \in \{1,2,3\}$, is now labelled by an element from $D_j$, and the local model is again as in Figure~\ref{fig:local-model-2mf}, but now taken times the open interval $(-1,1)$.

Let $X,Y$ be compact decorated stratified $2$-manifolds. 
A \textsl{(3-dimensional) defect bordism} $N \colon X \to Y$ is a compact decorated stratified $3$-manifold $N$, together with an isomorphism $X^{\mathrm{rev}} \sqcup Y \to \partial N$ of (germs of collars around) decorated stratified 2-manifolds. Here, $X^{\mathrm{rev}}$ is~$X$ with reversed orientation for all strata (but with the same labelling).
We call two defect bordisms $N,N' \colon X \to Y$ \textsl{equivalent} if there is an isomorphism $N \to N'$ compatible with stratification, orientations, decoration and boundary parametrisation.

\begin{definition}\label{def:defect-bordisms}
Let $\D$ be a set of defect data. The  
\textsl{decorated bordism category} $\Borddefn{3}(\D)$ has as objects compact closed $\D$-decorated stratified 2-manifolds. The set of morphisms between two objects $X,Y$ consists of the equivalence classes of decorated bordisms $[N] \colon X \to Y$. Composition is via gluing.
\end{definition}

The category $\Borddefn{3}(\D)$ is symmetric monoidal via the disjoint union as tensor product; for well-definedness of the gluing see \cite[Sect.\,2.1]{CRS1}. 
As in \cite[Def.\,3.1]{CMS} we fix some field $\Bbbk$ and define:

\begin{definition}
\label{def:defeTQFT}
 A \textsl{$3$-dimensional defect TQFT} with defect data~$\D$ is a symmetric monoidal functor 
\be
  \label{eq:def-TQFT}
  \zz\colon \Borddefn{3}(\D) \longrightarrow \Vectk \, . 
\ee
\end{definition}

\begin{remark}\label{rem:no-0-strata}
The theory of 3-dimensional defect TQFT as developed in \cite{CMS} has been extended to $n$ dimensions in \cite{CRS1}. In our description above, as well as in \cite{CMS}, bordisms are not allowed to have 0-strata. In \cite{CRS1}, 0-strata are included, resulting in an additional label set $D_0$ and additional ``adjacency maps'' \cite[Def.\,2.4]{CRS1}.
However, for a given defect TQFT without 0-strata there is a canonical way to add a ``complete set of labels $D_0$'' for 0-strata, see \cite[Sect.\,2.4]{CRS1} for details. In this sense, while omitting 0-strata in bordisms simplifies our discussion considerably, it is not a restriction as $D_0$ can be added back in. This is important as the orbifold construction from \cite[Sect.\,3]{CRS1} requires 0-strata, and we will indeed add them back in the follow-up work \cite{CRS3}.
\end{remark}

\section{Reshetikhin-Turaev TQFT}
\label{sec:MTCRT}

In this section we briefly recall some of the geometric and algebraic aspects of the Reshetikhin-Turaev construction of 3-dimensional TQFTs \cite{retu2,tur}, mostly following the conventions in \cite[Sect.\,4.4]{baki}. 
After recalling the definition of a modular tensor category~$\Cat{C}$, we review the category $\Bordriben{3}(\Cat{C})$ of $\Cat{C}$-marked bordisms. Then we state Turaev's theorem that from any modular tensor category~$\Cat{C}$ one can construct a TQFT with domain $\Bordriben{3}(\Cat{C})$. 

\medskip

Fix an algebraically closed field $\Bbbk$ of characteristic zero. 
A \textsl{modular tensor category (over~$\Bbbk$)} is a $\Bbbk$-linear abelian ribbon category~$\Cat{C}$ which 
\begin{enumerate}
\item 
is finitely semisimple with simple tensor unit,
\item 
has a non-degenerate braiding.
\end{enumerate}
Let us comment on these two points. 
The ``finitely'' in point~(i) means that the number of isomorphism classes of simple objects is finite, and that all objects are isomorphic to finite direct sums of the simple ones. 
Point~(ii) is the most crucial. 
One way to phrase it is that all transparent objects are isomorphic to direct sums of the tensor unit.\footnote{%
The original definition of modularity is via invertibility of the matrix whose entries are given by the invariant of the Hopf link coloured by simple objects~\cite[Sect.\,II.1.4]{tur}. Equivalence to the formulation in terms of transparent objects is shown in \cite[Prop.\,1.1]{Bruguieres:2000}.}
By definition, an object $T \in \Cat{C}$ is \textsl{transparent} if for all $X \in \Cat{C}$ we have $c_{X,T} \circ c_{T,X} = \id_{T \otimes X}$, i.\,e.\ $T$ has trivial monodromy with respect to all other objects.

\medskip

We now fix a modular tensor category~$\Cat{C}$, and move on	 to define $\Cat{C}$-marked 2- and 3-manifolds as in \cite[Def.\,4.4.1]{baki}. 
As in Section~\ref{sec:defectTQFTs}, all our manifolds are oriented. 
As topological and smooth $n$-manifolds are equivalent for $n \leqslant 3$, we may and will assume smoothness. 
A \textsl{$\Cat{C}$-marked 2-manifold} $X$ is a compact closed 2-manifold with finitely many  marked points~$p$, each equipped with a non-zero tangent vector $v_p$ and a label $(U_p,\eps_p)$ with $U_p \in \Cat{C}$ and $\eps_p \in \{ \pm \}$.
By $X^{\mathrm{rev}}$ we mean $X$ with reversed orientation, the same set of marked points $p$, but with $v_p$ replaced by $-v_p$ and $(U_p,\eps_p)$ replaced by $(U_p,-\eps_p)$.

A \textsl{$\Cat{C}$-marked 3-manifold} $N$ is a compact 3-manifold with possibly non-empty boundary, together with an embedded $\Cat{C}$-coloured ribbon tangle.
The ribbon tangle consists of ribbons and coupons. A ribbon has a core, that is, a 1-dimensional oriented submanifold. The ribbon itself carries a 2-orientation.\footnote{%
	Alternatively, instead of ribbons one can work with framed 1-manifolds. In this case one only remembers the core and a framing of the core obtained from picking a tangent vector to the ribbon which together with the orientation of the core induces the 2-orientation of the ribbon.}
Each ribbon is labelled by an element $U \in \Cat{C}$, and each coupon is labelled by a morphism from $\Cat{C}$ 
(compatible with the ribbons that end on it, as illustrated in Figure \ref{fig:C-marked-mfld-convention}, see \cite[Sect.\,2.3]{baki} for details). 
Each ribbon may form an annulus, or end on a coupon and/or the boundary $\partial N$. 

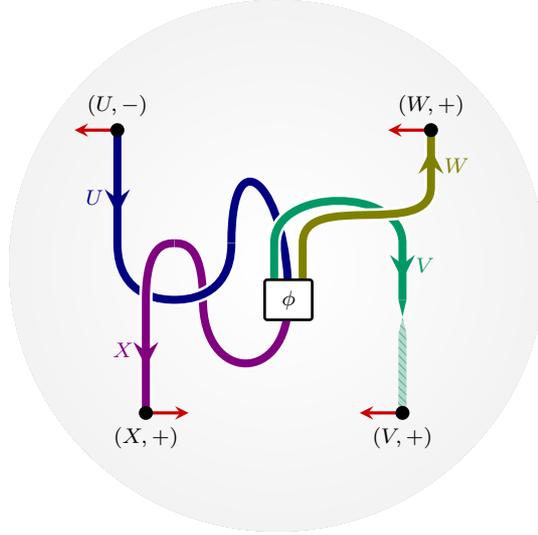
\begin{figure}[tb]
$$
\begin{tikzpicture}[line width=3pt, scale=0.75, color=blue!50!black, baseline]
\fill [inner color=white, outer color=white!90!gray] (-0.2,0.6) circle (4.7 cm);
\draw[color=blue!50!red] (0,0) .. controls +(0,-1.5) and +(0,-1.5) .. (-1.5,0)
		-- (-1.5,0) .. controls +(0,1) and +(0.15,0) .. (-2,1);
\draw (0,0) .. controls +(0,2) and +(0,2) .. (-1,1);
\draw[color=white, line width=5pt] (-1,1) .. controls +(0,-1) and +(0.15,0) .. (-2,0);
\draw[>=stealth, decoration={markings, mark=at position 0.8 with {\arrow{<}},}, postaction={decorate}] 
		(-1,1) .. controls +(0,-1) and +(0.15,0) .. (-2,0)
		-- (-2,0) .. controls +(-0.15,0) and +(0,-1) .. (-3,1) 
		-- (-3,3);
\draw[color=white, line width=5pt] (-2,1) .. controls +(-0.15,0) and +(0,1) .. (-2.5,0) -- (-2.5,-2);
\draw[color=blue!50!red, >=stealth, decoration={markings, mark=at position 0.75 with {\arrow{>}},}, postaction={decorate}] 
		(-2,1) .. controls +(-0.15,0) and +(0,1) .. (-2.5,0) 
		-- (-2.5,-2);
\draw[color=white, line width=5pt] (-0.25,0) -- (-0.25,1) -- (-0.25,1) .. controls +(0,1) and +(0,1) .. (2,1);
\draw[color=blue!40!green, >=stealth, decoration={markings, mark=at position 0.93 with {\arrow{>}},}, postaction={decorate}] 
		(-0.25,0) -- (-0.25,1)
		-- (-0.25,1) .. controls +(0,1) and +(0,1) .. (2,1) -- (2,0);
%
\newcommand{\brei}{0.07}
\fill[color=blue!40!green] (2-\brei,0) -- (2,-0.3) -- (2+\brei,0);
\fill [pattern=north west lines, opacity=0.3] 
		(2-\brei,-0.6) -- (2,-0.3) -- (2+\brei,-0.6) -- (2+\brei,-2) -- (1.99-\brei,-2) -- (2-\brei,-0.6);
\fill[color=blue!40!green, opacity=0.3] (2-\brei,-0.6) -- (2,-0.3) -- (2+\brei,-0.6);
\draw[color=blue!40!green, opacity=0.3] (2,-0.6) -- (2,-2);
%
\draw[color=white, line width=5pt] (0.25,0) -- (0.25,1) to[out=90, in=270] (2.5,2) -- (2.5,3); 
\draw[color=red!50!green, >=stealth, decoration={markings, mark=at position 0.93 with {\arrow{>}},}, postaction={decorate}] 
		(0.25,0) -- (0.25,1) to[out=90, in=270] (2.5,2) -- (2.5,3); 
%
\fill[color=white] (0,0) node[inner sep=4pt,draw, rounded corners=1pt, fill, color=white] (R2) {{\scriptsize$\rho_2$}};
\draw[line width=1pt, color=black] (0,0) node[inner sep=4pt,draw, rounded corners=1pt] (R) {{\scriptsize$\;\phi\;$}};
%
\draw[color=black!20!red, very thick, ->, >=stealth] (-2.5,-2) -- (-1.75,-2); 
\draw[color=black!20!red, very thick, ->, >=stealth] (-3,3) -- (-3.75,3); 
\draw[color=black!20!red, very thick, ->, >=stealth] (2,-2) -- (1.25,-2); 
\draw[color=black!20!red, very thick, ->, >=stealth] (2.5,3) -- (1.75,3); 
%
\fill[color=black] (-2.5,-2) circle (3.5pt) node[below, color=black] (X) {{\scriptsize$(X,+)$}};
\fill[color=blue!50!red] (-2.9,-0.5) node[below] (X) {{\scriptsize$X$}};
\fill[color=black] (-3,3) circle (3.5pt) node[above, color=black] (X) {{\scriptsize$(U,-)$}};
\fill (-3.4,2.2) node[below] (X) {{\scriptsize$U$}};
\fill[color=black] (2,-2) circle (3.5pt) node[below, color=black] (X) {{\scriptsize$(V,+)$}};
\fill[color=blue!40!green] (2.4,1) node[below] (X) {{\scriptsize$V$}};
\fill[color=black] (2.5,3) circle (3.5pt) node[above, color=black] (X) {{\scriptsize$(W,+)$}};
\fill[color=red!50!green] (2.95,2) node[above] (X) {{\scriptsize$W$}};
\end{tikzpicture}
$$
\caption{%
A ribbon diagram with $\phi \in \Hom_{\Cat{C}}(X^\vee,V \otimes U^\vee \otimes W)$, embedded into a 3-ball with four marked points on the boundary.%
}
\label{fig:C-marked-mfld-convention}
\end{figure}

The boundary $\partial N$ of a $\Cat{C}$-marked 3-manifold becomes a $\Cat{C}$-marked 2-manifold as follows. 
The marked points $p$ are the points where the core of a ribbon intersects $\partial N$; the tangent vector $v_p$ is a choice of nonzero vector in the intersection of the tangent planes of the ribbon and of $\partial N$ at $p$, such that $v_p$ and the orientation of the core induce the orientation of the ribbon; 
if the ribbon is labelled~$U$ and the core is oriented towards the boundary, the marked point is labelled $(U,+)$, and $(U,-)$ else, see \cite[Fig.\,4.8]{baki} and Figure~\ref{fig:C-marked-mfld-convention} for an illustration.

Let $X,Y$ be $\Cat{C}$-marked 2-manifolds. A \textsl{$\Cat{C}$-marked bordism} $N \colon X \to Y$ is a $\Cat{C}$-marked 3-manifold together with an isomorphism $X^{\mathrm{rev}} \sqcup Y \to \partial N$ of $\Cat{C}$-marked 2-manifolds (the isomorphism needs to respect the tangent vectors only up to a positive scalar). Two such bordisms $N,N' \colon X\to Y$ are \textsl{equivalent} if there is an isomorphism 
$N \to N'$, compatible with the boundary parametrisation, orientation, ribbon tangle and $\Cat{C}$-marking.

\begin{definition}
Let $\Cat{C}$ be a modular tensor category. 
The \textsl{$\Cat{C}$-marked bordism category} $\Bordribn{3}(\Cat{C})$ has as objects $\Cat{C}$-marked 2-manifold and as morphisms equivalence classes of $\Cat{C}$-marked bordisms. 
\end{definition}

It turns out that the Reshetikhin-Turaev TQFT is typically anomalous, that is, the gluing axiom only holds up to scalars. 
In order to get a functor one can extend $\Bordribn{3}(\Cat{C})$ as is done in \cite[Sect.\,IV.9]{tur}, to which we refer for details. We call the extended category 
\be\label{eq:bord-rib-ext}
\Bordriben{3}(\Cat{C}) \, .
\ee
Its objects are pairs $(\Sigma,\mathcal{L})$, where $\Sigma \in \Bordribn{3}(\Cat{C})$ and $\mathcal{L}$ is a Lagrangian subspace of $H_1(\Sigma,\mathbb{R})$. 
Its morphisms are pairs $([N],m)$, where $[N]$ is a morphism in $\Bordribn{3}(\Cat{C})$ and $m \in \mathbb{Z}$. The integers are additive under tensor products (i.\,e.\ disjoint union), but they behave in a more complicated way under gluing (which depends also on the subspaces $\mathcal{L}$)	
and allow one to absorb the gluing anomaly into scalar $m$-dependent weights.

\medskip

We have the following important result due to \cite{tur}.

\begin{theorem}
\label{thm:RTTQFT}
A modular tensor category $\Cat{C}$ over $\Bbbk$ gives rise -- via the construction in \cite[Sect.\,IV.9]{tur} -- to a symmetric monoidal functor (i.\,e.\ a TQFT) 
\begin{equation}
  \label{eq:RT}
  \zrt \colon \Bordriben{3}(\Cat{C}) \lra \Vect_{\Bbbk} \, .
\end{equation}
\end{theorem}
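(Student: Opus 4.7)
The strategy is the one detailed in \cite[Sect.~IV.9]{tur}, which I now outline. The starting point is to construct the Reshetikhin-Turaev invariant $\tau_{\Cat{C}}(M,\Gamma)$ of a closed connected oriented $3$-manifold $M$ equipped with a $\Cat{C}$-coloured ribbon graph $\Gamma$: by the Lickorish-Wallace theorem, $M$ is obtained by surgery on $S^3$ along some framed link $L$, and one sets $\tau_{\Cat{C}}(M,\Gamma)$ to be a normalised sum, over colourings of $L$ by simple objects of $\Cat{C}$, of the Reshetikhin-Turaev ribbon-graph invariant of $L \cup \Gamma$ in $S^3$. Invariance under Kirby moves—where modularity of $\Cat{C}$ enters via invertibility of the $s$-matrix (equivalently, the non-degeneracy of the braiding stated above)—ensures that $\tau_{\Cat{C}}(M,\Gamma)$ depends only on $(M,\Gamma)$ and not on the surgery presentation.

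To build the functor $\zrt$ itself, I would proceed as follows. For a $\Cat{C}$-marked surface $\Sigma$, pick a handlebody $H_\Sigma$ with $\partial H_\Sigma = \Sigma$ and define $\zrt(\Sigma)$ as the quotient of the free $\Bbbk$-vector space spanned by $\Cat{C}$-coloured ribbon graphs in $H_\Sigma$ ending transversally on the marked points of $\Sigma$, modulo the kernel of the bilinear pairing obtained by gluing two such graphs along $\Sigma$ to form a closed marked $3$-manifold and applying $\tau_{\Cat{C}}$. The result is a finite-dimensional vector space (by semisimplicity and the analysis of the fusion category on the spine of $H_\Sigma$), and standard handle-slide arguments show that it is canonically independent of $H_\Sigma$. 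For a $\Cat{C}$-marked bordism $N \colon \Sigma_1 \to \Sigma_2$, I would then define $\zrt([N])$ on a basis vector $[\Gamma_1] \in \zrt(\Sigma_1)$ by gluing $H_{\Sigma_1}$ with its graph to $N$ along $\Sigma_1$, reading off an element of $\zrt(\Sigma_2)$ via the pairing with graphs in $H_{\Sigma_2}$.

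The main obstacle—and the reason for passing to the extended category $\Bordriben{3}(\Cat{C})$—is the gluing anomaly. When two bordisms are glued, the naive composition $\zrt(N') \circ \zrt(N)$ differs from $\zrt(N' \circ N)$ by a scalar governed by the signature defect of the corresponding closed $4$-manifold (Wall's non-additivity formula), which precisely equals the Maslov index $\mu(\mathcal{L}_1,\mathcal{L}_2,\mathcal{L}_3)$ of three Lagrangian subspaces of $H_1(\Sigma_{\mathrm{glue}};\R)$. The extension to $\Bordriben{3}(\Cat{C})$ carries exactly this data: objects are enriched by a Lagrangian $\mathcal{L} \subset H_1(\Sigma;\R)$ and morphisms by an integer counting accumulated anomaly. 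Absorbing the anomaly as a scalar prefactor $\kappa^{m + \mu}$, where $\kappa$ is the global anomaly constant of $\Cat{C}$, then yields strict functoriality on the extended category. Symmetric monoidality is comparatively routine: $\zrt(\Sigma \sqcup \Sigma') = \zrt(\Sigma) \otimes \zrt(\Sigma')$ and $\zrt([N] \sqcup [N']) = \zrt([N]) \otimes \zrt([N'])$ follow from the multiplicativity of $\tau_{\Cat{C}}$ under disjoint union, and the symmetry isomorphism is the flip of tensor factors.
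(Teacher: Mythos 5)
The paper does not give a proof of this theorem; it records it as a citation to Turaev's monograph \cite[Sect.\,IV.9]{tur}, and the substance of your outline is a faithful sketch of that construction: the closed-manifold invariant $\tau_{\Cat C}(M,\Gamma)$ via a surgery presentation of $M$, Kirby-move invariance powered by modularity (invertibility of the $s$-matrix), the gluing anomaly governed by the Maslov index of triples of Lagrangian subspaces of $H_1$ which is carried as extra data in $\Bordriben{3}(\Cat C)$, and monoidality from multiplicativity of $\tau_{\Cat C}$ under disjoint union. The one place where your route genuinely diverges from the cited source is the definition of the state spaces: you define $\zrt(\Sigma)$ by the ``universal construction,'' as a quotient of the free span of coloured ribbon graphs in a handlebody by the radical of the $\tau_{\Cat C}$-gluing pairing, whereas in \cite[Ch.\,IV]{tur} Turaev instead builds an explicit modular functor from ``decorated types'' (roughly, spaces $\bigoplus \Hom_{\Cat C}(\one,\,\cdots)$ indexed by colourings of a standard handlebody) and proves well-definedness and gluing properties for that concrete model. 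The two presentations yield isomorphic TQFTs, and each has advantages — your skein/pairing approach makes finite dimensionality and naturality transparent, while Turaev's concrete model gives explicit bases and immediate compatibility with the graphical calculus used elsewhere in this paper — but if the goal is to reproduce the construction the theorem references, the explicit model of \cite[Sect.\,IV.1--IV.2, IV.9]{tur} is the one to follow.
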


We will not need the details of this construction, but we will mention two properties of $\zrt$ which are important to us as they allow one to manipulate the ribbon tangles inside a bordism. 

For the first property, let $\phi \colon X \to Y$ be a morphism in $\Cat{C}$ and let $B^3(\phi)$ be a 3-ball with embedded ribbon tangle, such that the ribbon tangle contains a single coupon labelled~$\phi$, and such that each ribbon has one end on the coupon and one on the boundary $\partial B^3(\phi)$. Then the linear map
\be
	\phi \longmapsto \zrt \big( B^3(\phi) \big) 
\ee
is an isomorphism from $\Cat{C}(X,Y)$ to $\zrt \big( \partial B^3(\phi) \big)$.
This is a consequence of the definition of the state spaces, i.\,e.\ of how $\zrt$ is evaluated on objects.

Before stating the second property we need to
recall the following relation between ribbon tangles and morphisms in $\Cat{C}$ (which holds for every ribbon category, see e.\,g.\ \cite[Sect.\,2.3]{baki}). Consider a $\Cat{C}$-coloured ribbon tangle $R$ inside $\mathbb{R}^2 \times [0,1]$, such that ribbons which end on the boundary only do so on ``on the real axis'', that is on $\mathbb{R} \times \{0\} \times \{0\}$ or $\mathbb{R} \times \{0\} \times \{1\}$. To such a configuration one can assign a unique morphism $\mathcal{F}(R)$ in $\Cat{C}$ whose source and target are given by the tensor product of the objects labelling the ribbons intersecting the boundary. Importantly, $\mathcal{F}(R)$ depends only on the isotopy class of $R$ \cite{retu1}, see also \cite[Thm.\,2.3.8]{baki}. This, ultimately, is also the justification behind the graphical calculus employed in Section~\ref{sec:modules}.

For the second property, let $B^3(R)$ be a 3-ball with embedded $\Cat{C}$-coloured ribbon tangle~$R$. 
Isotope the ribbon tangle such that some part -- call it $R'$ -- of it sits inside a box, which we identify with a subset of $\mathbb{R}^2 \times [0,1]$, and suppose the ribbons intersect the box as required above. Then the ribbon tangle $R'$ can be replaced by a coupon labelled $\mathcal{F}(R')$ without changing the value of $\zrt$.

\begin{remark}
\label{remark:rel-rib-def}
\begin{enumerate}
\item
Given a modular tensor category $\Cat{C}$, denote by~$I$ a choice of representatives of the isomorphism classes of simple objects. 
For $i \in I $ write $\dim(i) = d_i \cdot \id_\one \in \End_{\Cat{C}}(\one) \cong \Bbbk$ with $d_i \in \Bbbk^\times$ for the categorical dimension of~$i$ (i.\,e.\ the trace of $\id_i$ as computed via the duality morphisms), and let $\vartheta_i \in \Bbbk^\times$ be defined by $\theta_i = \vartheta_i \cdot \id_i$. 
The modular tensor category $\Cat{C}$ is called \textsl{anomaly free} if
\be
\sum_{i\in I } \vartheta_i \cdot d_i
= 
\sum_{i\in I } \vartheta_i^{-1} \cdot d_i \, .
\ee
The important point about anomaly free $\Cat{C}$ is that in this case the gluing anomaly which required us to extend the bordism category in \eqref{eq:RT} vanishes \cite[Thm.\,IV.7.1]{tur}. In other words, for anomaly free $\Cat{C}$ the functor \eqref{eq:RT} factors as
\be
	  \zrt \colon \Bordriben{3}(\Cat{C}) 
	  \lra 
	  \Bordribn{3}(\Cat{C}) 
	  \lra
	  \Vect_{\Bbbk} \, .
\ee
\item
Just as we extended $\Bordribn{3}(\Cat{C})$ to $\Bordriben{3}(\Cat{C})$, we can extend the bordism category $\Borddefn{3}(\D)$ 
introduced in Definition~\ref{def:defect-bordisms} to 
\be
\label{eq:bord-def-ext}
	\Borddefen{3}(\D) \, .
\ee
Its objects are pairs $(\Sigma, \mathcal L)$ with $\Sigma \in \Borddefn{3}(\D)$ and~$\mathcal L$ a Lagrangian subspace of $H_1(\Sigma',\mathbb{R})$, with~$\Sigma'$ the unstratified manifold underlying~$\Sigma$. 
The morphisms are pairs $([N],m)$ with~$N$ now a $\D$-decorated defect bordism and $m \in \Z$. 
The numbers~$m$ behave under gluing as in $\Bordriben{3}(\Cat{C})$. 
This extension will be important in the next section.
\end{enumerate}
\end{remark}

\section{Reshetikhin-Turaev TQFT with line and surface defects}
\label{sec:RTdef}

In this section we introduce a set of defect data~$\DC$ made up of Frobenius algebras (to decorate 2-strata) and their cyclic multi-modules (to decorate 1-strata) internal to a fixed modular tensor category~$\Cat C$. 
Then we construct a defect TQFT 
\be
\label{eq:whatwedo}
\zzc \colon \Borddefen{3}(\DC) \lra \Vectk 
\ee
which augments the Reshetikhin-Turaev TQFT of Theorem~\ref{thm:RTTQFT} to include non-trivial surface defects. 
Finally, we show that $\zzc$ can detect some non-isotopic surface embeddings. 

\medskip

Let $\Cat{C}$ be a modular tensor category. 
A \textsl{Frobenius algebra} in $\Cat{C}$ is a tuple $(A,\mu,\eta,\Delta,\eps)$ such that $(A,\mu,\eta)$ is an associative unital algebra, $(A,\Delta,\eps)$ is a coassociative counital coalgebra, and the Frobenius property
\be
\tikzzbox{\begin{tikzpicture}[very thick,scale=0.4,color=green!50!black, baseline=0cm]
\draw[-dot-] (0,0) .. controls +(0,-1) and +(0,-1) .. (-1,0);
\draw[-dot-] (1,0) .. controls +(0,1) and +(0,1) .. (0,0);
\draw (-1,0) -- (-1,1.5); 
\draw (1,0) -- (1,-1.5); 
\draw (0.5,0.8) -- (0.5,1.5); 
\draw (-0.5,-0.8) -- (-0.5,-1.5); 
\end{tikzpicture}}
=
\tikzzbox{\begin{tikzpicture}[very thick,scale=0.4,color=green!50!black, baseline=0cm]
\draw[-dot-] (0,1.5) .. controls +(0,-1) and +(0,-1) .. (1,1.5);
\draw[-dot-] (0,-1.5) .. controls +(0,1) and +(0,1) .. (1,-1.5);
\draw (0.5,-0.8) -- (0.5,0.8); 
\end{tikzpicture}}
=
\tikzzbox{\begin{tikzpicture}[very thick,scale=0.4,color=green!50!black, baseline=0cm]
\draw[-dot-] (0,0) .. controls +(0,1) and +(0,1) .. (-1,0);
\draw[-dot-] (1,0) .. controls +(0,-1) and +(0,-1) .. (0,0);
\draw (-1,0) -- (-1,-1.5); 
\draw (1,0) -- (1,1.5); 
\draw (0.5,-0.8) -- (0.5,-1.5); 
\draw (-0.5,0.8) -- (-0.5,1.5); 
\end{tikzpicture}}
\ee
holds, i.\,e.\ $\Delta$ is an intertwiner of $A$-$A$-bimodules. A Frobenius algebra is called \textsl{symmetric} if $\eps \circ \mu = \eps \circ \mu \circ c_{A,A} \circ (\id_A \otimes \theta_A)$. 
Some equivalent ways of writing the right-hand side are
\be
\begin{tikzpicture}[very thick,scale=0.4,color=green!50!black, baseline=0cm]
\draw (-1.5,2) node[Odot] (end) {}; 
\draw[color=green!50!black] (-1.5,1) -- (end); 
\fill[color=green!50!black] (-1.5,1) circle (4.5pt) node (mult1) {};
\draw[color=green!50!black] (-2.2,0) .. controls +(0,0.75) and +(0,0) .. (-1.5,1);
\draw[color=green!50!black] (-2.2,0) .. controls +(0,-0.25) and +(0,0.25) .. (-0.8,-1.0);
\draw[color=green!50!black] (-0.8,0) .. controls +(0,0.75) and +(0,0) .. (-1.5,1);
\draw[color=white, line width=4pt] (-0.8,0) .. controls +(0,-0.25) and +(0,0.25) .. (-2.2,-1.0);
\draw[color=green!50!black] (-0.8,0) .. controls +(0,-0.25) and +(0,0.25) .. (-2.2,-1.0);
\draw[color=green!50!black] (-2.2,-1.0) -- (-2.2,-1.5);
\draw[color=green!50!black] (-0.8,-1.0) -- (-0.8,-1.5);
\fill (-0.8,-1.0) circle (4.5pt) node[right] (meet2) {{\scriptsize$\theta_A$}};
\end{tikzpicture} 
=
\;
\tikzzbox{\begin{tikzpicture}[very thick,scale=0.4,color=green!50!black, baseline=0cm]
\draw[-dot-] (0,0) .. controls +(0,1) and +(0,1) .. (-1,0);
\draw[directedgreen, color=green!50!black] (1,0) .. controls +(0,-1) and +(0,-1) .. (0,0);
\draw[redirectedgreen, color=green!50!black] (1,1.5) .. controls +(0,1.25) and +(0,1.25) .. (-2,1.5);
\draw (-1,0) -- (-1,-1.5); 
\draw (1,0) -- (1,1.5); 
\draw (-2,1.5) -- (-2,-1.5);
\draw (-0.5,1.4) node[Odot] (end) {}; 
\draw (-0.5,0.8) -- (end); 
\end{tikzpicture}}
\;
=
\;
\tikzzbox{\begin{tikzpicture}[very thick,scale=0.4,color=green!50!black, baseline=0cm]
\draw[redirectedgreen, color=green!50!black] (0,0) .. controls +(0,-1) and +(0,-1) .. (-1,0);
\draw[redirectedgreen, color=green!50!black] (-1,1.5) .. controls +(0,1.25) and +(0,1.25) .. (2,1.5);
\draw[-dot-] (1,0) .. controls +(0,1) and +(0,1) .. (0,0);
\draw (2,-1.5) -- (2,1.5); 
\draw (-1,0) -- (-1,1.5); 
\draw (1,0) -- (1,-1.5); 
\draw (0.5,1.4) node[Odot] (end) {}; 
\draw (0.5,0.8) -- (end); 
\end{tikzpicture}}
\;
=
\;
\begin{tikzpicture}[very thick,scale=0.4,color=green!50!black, baseline=0cm]
\draw (-1.5,2) node[Odot] (end) {}; 
\draw[color=green!50!black] (-1.5,1) -- (end); 
\fill[color=green!50!black] (-1.5,1) circle (4.5pt) node (mult1) {};
\draw[color=green!50!black] (-2.2,0) .. controls +(0,0.75) and +(0,0) .. (-1.5,1);
\draw[color=green!50!black] (-0.8,0) .. controls +(0,-0.25) and +(0,0.25) .. (-2.2,-1.0);
\draw[color=green!50!black] (-0.8,0) .. controls +(0,0.75) and +(0,0) .. (-1.5,1);
\draw[color=green!50!black] (-2.2,-1.0) -- (-2.2,-1.5);
\draw[color=green!50!black] (-0.8,-1.0) -- (-0.8,-1.5);
\draw[color=white, line width=4pt] (-2.2,0) .. controls +(0,-0.25) and +(0,0.25) .. (-0.8,-1.0);
\draw[color=green!50!black] (-2.2,0) .. controls +(0,-0.25) and +(0,0.25) .. (-0.8,-1.0);
\fill (-0.8,-1.0) circle (4.5pt) node[right] (meet2) {{\scriptsize$\theta_A^{-1}$}};
\end{tikzpicture} 
\, . 
\ee
$A$ is called
\textsl{$\Delta$-separable} if $\mu \circ \Delta = \id_A$, 
\be
\tikzzbox{\begin{tikzpicture}[very thick,scale=0.4,color=green!50!black, baseline=0cm]
\draw[-dot-] (0,0) .. controls +(0,-1) and +(0,-1) .. (1,0);
\draw[-dot-] (0,0) .. controls +(0,1) and +(0,1) .. (1,0);
\draw (0.5,-0.8) -- (0.5,-1.5); 
\draw (0.5,0.8) -- (0.5,1.5); 
\end{tikzpicture}}
\, = \, 
\tikzzbox{\begin{tikzpicture}[very thick,scale=0.4,color=green!50!black, baseline=0cm]
\draw (0.5,-1.5) -- (0.5,1.5); 
\end{tikzpicture}}
\, . 
\ee

If $A_1$ and $A_2$ are Frobenius algebras, then so is $A_1 \otimes A_2$ with product and unit as in Section~\ref{sec:modules}, and with coproduct
\be
\Delta_{A_1 \otimes A_2} = ( 1_{A_1} \otimes c^{-1}_{A_1,A_2} \otimes 1_{A_2} ) \circ ( \Delta_1 \otimes \Delta_2 )
=
\begin{tikzpicture}[very thick,scale=0.75,color=blue!50!black, baseline]
\draw[color=green!50!black] (-1,1) node[above] (A1) {{\scriptsize$A_1$}};
\draw[color=green!50!black] (0,1) node[above] (A2) {{\scriptsize$A_2$}};
\draw[color=green!50!black] (1,1) node[above] (A1r) {{\scriptsize$A_1$}};
\draw[color=green!50!black] (2,1) node[above] (A2r) {{\scriptsize$A_2$}};
\draw[color=green!50!black] (0,-1) node[below] (A1up) {{\scriptsize$A_1$}};
\draw[color=green!50!black] (1,-1) node[below] (A2up) {{\scriptsize$A_2$}};
\fill[color=green!50!black] (0,0) circle (2.5pt) node (mult1) {};
\fill[color=green!50!black] (1,0) circle (2.5pt) node (mult2) {};
\draw[color=green!50!black] (A1) -- (0,0);
\draw[color=green!50!black] (A1r) -- (0,0);
\draw[color=white, line width=4pt] (A2) -- (1,0);
\draw[color=green!50!black] (A2) -- (1,0);
\draw[color=green!50!black] (A2r) -- (1,0);
\fill[color=green!50!black] (1,0) circle (2.5pt) node (mult2) {};
\draw[color=green!50!black] (0,0) -- (A1up);
\draw[color=green!50!black] (1,0) -- (A2up);
\end{tikzpicture} 
\ee 
and counit $\eps_{A_1 \otimes A_2} = \eps_1 \otimes \eps_2$.
It is easy to check that if $A_1$ and $A_2$ are symmetric (resp.\ $\Delta$-separable), then also $A_1 \otimes A_2$ is symmetric (resp.\ $\Delta$-separable).

Recall the definition of $A^{\mathrm{op}}$ from \eqref{eq:muop}. 
For a Frobenius algebra $A$, $A^{\mathrm{op}}$ is a Frobenius algebra with  algebra structure \eqref{eq:muop} and coproduct $\Delta^{\mathrm{op}} = c_{A,A}^{-1} \circ \Delta$. 
It is $\Delta$-separable iff $A$ is.
Below we will use the notation 
\be
A^+ := A 
\, , \quad 
A^- := A^{\mathrm{op}} 
\, . 
\ee
For more details on Frobenius algebras and proofs of the above statements, see e.\,g.~\cite[Sect.\,3]{tft1}

\medskip

Recall that the junction map \eqref{eq:junction} takes values in cyclically symmetrised lists in $(D_2 \times \{ \pm \})^m$. To implement this in the present setting we require the following variant of Definition~\ref{def:symmetricmodule}:

\begin{definition}
\label{def:cyclic-with-signs}
For an $m>0$ and $i \in \{1,\dots,m\}$ let $A_i \in \Cat{C}$ be algebras and $\eps_i \in \{ \pm \}$. 
A \textsl{(maximally) cyclic multi-module $M$ for $((A_1,\eps_1),\dots,(A_m,\eps_m))$} is a $k$-cyclic $(A_1^{\eps_1},\dots, A_m^{\eps_m})$-multi-module $M$, where $k>0$ is minimal such that $(A_{i},\eps_{i}) = (A_{i+k},\eps_{i+k})$ for all $i$ (where $i+k$ is taken modulo $m$). 
\end{definition}

Since we will only ever use the maximal cyclic symmetry below, we will drop the qualifier ``maximally''.
We can now give the explicit description of line and surface defects in Reshetikhin-Turaev TQFT.

\begin{definition}
\label{def:defectdataMTC}
The set of defect data $\DC \equiv (D_1^{\Cat C},D_2^{\Cat C},D_3^{\Cat C},s,t,j)$ associated to a modular tensor category~$\Cat C$ is:
\begin{itemize}
\item
$D_3^{\Cat C} := \{ \Cat C \}$, 
\item 
$D_2^{\Cat C} := \{ \Delta\text{-separable symmetric Frobenius algebras in } \Cat C \}$, 
\item
$
D_1^{\Cat C} := \bigsqcup_{n \in \mathbb{Z}_{\geqslant 0}} L_n
$, 
where $L_0 = \big\{ X \in \Cat{C} \,\big|\, \theta_X = \id_X \big\}$,  
and, for $n>0$,
\begin{align*}
	L_n =\, &\big\{ 
	\big((A_1,\eps_1), (A_2,\eps_2), \dots, (A_n,\eps_n), M\big) \;\big|\; 
					A_i \in D_2^{\Cat C} , \; 
					\eps_i \in \{ \pm \} , \; 
	 \\ & \qquad
					M \text{ is a cyclic multi-module for }
					\big((A_1,\eps_1), (A_2,\eps_2), \dots, (A_n,\eps_n)\big)
					\big\}
\end{align*}
\item 
$s(A,\pm) \stackrel{\text{def}}{=} \Cat C \stackrel{\text{def}}{=} t(A,\pm)$ 
for all $A \in D_2^{\Cat C}$, 
\item 
$
j(M)
\stackrel{\text{def}}{=}  \Cat C$ 
for $n=0$, and  
\be
j \big( ((A_1,\eps_1), \dots, (A_n,\eps_n), M) \big) 
\stackrel{\text{def}}{=} 
((A_1,\eps_1), \dots, (A_n,\eps_n))/C_n
\ee
for $n>0$. 
 \end{itemize}
\end{definition}

To construct our defect TQFT \eqref{eq:whatwedo}, we want to reduce the evaluation of $\zzc$ on a $\DC$-decorated bordism\footnote{As noted after \eqref{eq:bord-rib-ext} and \eqref{eq:bord-def-ext}, the objects and morphisms 
in the domains $\Bordriben{3}(\Cat{C})$ and $\Borddefen{3}(\DC)$ of~$\zrt$ and~$\zzc$, respectively, 
are of the form $(\Sigma,\mathcal L)$ and $([N],m)$.
Since the Lagrangian subspaces~$\mathcal L$ and integers~$m$ are only spectators in our construction, we will suppress them in the notation.}~$N$ 
to the Reshetikhin-Turaev construction $\zrt(N)$ reviewed in Section~\ref{sec:MTCRT}. 
A brief summary of the construction below is this (see also Figure~\ref{fig:roughoutline} for an illustration): 
\begin{itemize}
\item[(1)]
Replace every 1-stratum in~$N$ decorated by $((A_1,\eps_1), \dots, (A_n,\eps_n), M) \in D_1^{\Cat C}$ with an $M$-decorated ribbon. 
\item[(2)]
For every 2-stratum decorated by $A_i \equiv (A_i,\mu_i,\Delta_i) \in D_2^{\Cat C}$, choose a triangulation~$t_i$, decorate the 1- and 0-strata of the Poincar\'{e} dual of~$t_i$ with~$A_i$ and~$\mu_i$ or~$\Delta_i$, respectively, and view the result as a ribbon graph embedded in~$N$, using the action $\rho_i \colon A_i \otimes M \to M$ to connect $A_i$-ribbons to the $M$-ribbon as dictated by the (dual of the) triangulation~$t_i$. 
\item[(3)]
Evaluate the resulting bordism $N(\{t_i\},\dots)$ with embedded ribbon graph with the functor $\zrt$. 
(The notation indicates that $N(\{t_i\},\dots)$ depends on the choice of triangulations~$t_i$, as well as other choices discussed below.) 
\item[(4)]
Take the limit of the resulting inverse system to produce $\zzc(N)$, independent of all choices. 
\end{itemize}

\begin{figure}[t]
$$
\begin{tikzpicture}[
			     baseline=(current bounding box.base), 
			     descr/.style={fill=white,inner sep=3.5pt}, 
			     normal line/.style={->}
			     ] 
\matrix (m) [matrix of math nodes, row sep=7.5em, column sep=0.6em, text height=0.5ex, text depth=0.1ex] {%
	\begin{minipage}{3cm}%
	\begin{tikzpicture}[very thick, scale=0.75, color=blue!50!black, baseline]
	%
	\draw[color=black] (1.8,1.4) node (X) {{\scriptsize$N$}};
	\clip (2,0) arc [radius=2, start angle=0, delta angle=360];
	\draw[color=gray, line width=0.5pt, densely dashed] (0,0) circle (2);
	%
	\fill [green!50!black,opacity=0.25] (0,2.2) -- (-2,2) -- (-2,-2) -- (0,-2) to[out=210, in=270] (2,-1.4) 
			to[out=180, in=270] (1.2,-0.3) to[out=90, in=300] (0.7,1.1) to[out=120, in=300] (0,2.2);
	%
	\draw[color=blue!50!black, >=stealth, decoration={markings, mark=at position 0.87 with {\arrow{>}},}, postaction={decorate}] 
			(2,-1.4) 
			to[out=180, in=270] (1.2,-0.3)
			to[out=90, in=300] (0.7,1.1)
			to[out=120, in=300] (0,2.2);
	%
	\draw (0.7,1.5) node (X) {{\scriptsize$M$}};
	\draw[color=green!50!black] (-0.4,-0.3) node (X) {{\scriptsize$A_i$}};
	\end{tikzpicture} 
	\end{minipage}%
 & & 
\begin{tikzpicture}[very thick, scale=0.75, color=blue!50!black, baseline]
%
\draw[color=black] (1.8,1.4) node (X) {{\scriptsize$N$}};
\clip (2,0) arc [radius=2, start angle=0, delta angle=360];
\draw[color=gray, line width=0.5pt, densely dashed] (0,0) circle (2);
%
\fill [green!50!black,opacity=0.25] (0,2.2) -- (-2,2) -- (-2,-2) -- (0,-2) to[out=210, in=270] (2,-1.4) 
		to[out=180, in=270] (1.2,-0.3) to[out=90, in=300] (0.7,1.1) to[out=120, in=300] (0,2.2);
%
\draw[color=blue!50!black, line width=3pt, >=stealth, decoration={markings, mark=at position 0.87 with {\arrow{>}},}, postaction={decorate}] 
		(2,-1.4) 
		to[out=180, in=270] (1.2,-0.3)
		to[out=90, in=300] (0.7,1.1)
		to[out=120, in=300] (0,2.2);
%
\draw (0.95,1.35) node (X) {{\scriptsize$M$}};
\draw[color=green!50!black] (-0.4,-0.3) node (X) {{\scriptsize$A_i$}};
\end{tikzpicture} 
& 
\hspace{1.5cm}
\begin{tikzpicture}[very thick, scale=0.75, color=blue!50!black, baseline]
%
\draw[color=black] (1.8,1.4) node (X) {{\scriptsize$N$}};
\clip (2,0) arc [radius=2, start angle=0, delta angle=360];
\draw[color=gray, line width=0.5pt, densely dashed] (0,0) circle (2);
%
\fill[color=black] (-1,-1.4) circle (1pt) node (v1) {};
\draw[line width=0.3pt, color=black, >=stealth, decoration={markings, mark=at position 0.7 with {\arrow{>}},}, postaction={decorate}] 
		(1.4,-1.1) -- (-1,-1.4); 
\draw[line width=0.3pt, color=black, >=stealth, decoration={markings, mark=at position 0.7 with {\arrow{>}},}, postaction={decorate}] 
		(1.05,0.4) -- (-1,-1.4); 
\draw[line width=0.3pt, color=black, >=stealth, decoration={markings, mark=at position 0.5 with {\arrow{>}},}, postaction={decorate}] 
		(-1,-1.4) -- (-2.5,1); 
\draw[line width=0.3pt, color=black, >=stealth, decoration={markings, mark=at position 0.5 with {\arrow{>}},}, postaction={decorate}] 
		(-1,-1.4) -- (-0.8,-1.9); 
%
\fill[color=green!50!black] (0.2,-1.02) circle (4.5pt) node (Delta) {};
\fill[color=green!50!black] (-0.2,0.3) circle (4.5pt) node (mu) {};
\draw[line width=3pt, color=black!50!green, >=stealth, decoration={markings, mark=at position 0.6 with {\arrow{>}},}, postaction={decorate}] 
		(0.2,-1.02) to[out=130, in=-60] (-0.2,0.3); 
\draw[line width=3pt, color=black!50!green, >=stealth, decoration={markings, mark=at position 0.7 with {\arrow{>}},}, postaction={decorate}] 
		(0.8,-1.9) to[out=90, in=270] (0.2,-1.02); 
\draw[line width=3pt, color=black!50!green, >=stealth, decoration={markings, mark=at position 0.7 with {\arrow{>}},}, postaction={decorate}] 
		(0.2,-1.02) to[out=66, in=245] (1.1,-0.3); 
\draw[line width=3pt, color=black!50!green, >=stealth, decoration={markings, mark=at position 0.7 with {\arrow{>}},}, postaction={decorate}] 
		(-0.2,0.3) to[out=90, in=235] (0.7,1.1); 
\draw[line width=3pt, color=black!50!green, >=stealth, decoration={markings, mark=at position 0.8 with {\arrow{>}},}, postaction={decorate}] 
		(-2.4,-1.9) to[out=40, in=250] (-0.2,0.3); 
%
\fill [green!50!black,opacity=0.07] (0,2.2) -- (-2,2) -- (-2,-2) -- (0,-2) to[out=210, in=270] (2,-1.4) 
		to[out=180, in=270] (1.2,-0.3) to[out=90, in=300] (0.7,1.1) to[out=120, in=300] (0,2.2);
%
\draw[color=blue!50!black, line width=3pt, >=stealth, decoration={markings, mark=at position 0.92 with {\arrow{>}},}, postaction={decorate}] 
		(2,-1.4) 
		to[out=180, in=270] (1.2,-0.3)
		to[out=90, in=300] (0.7,1.1)
		to[out=120, in=300] (0,2.2);
%
\draw (0.76,1.48) node (X) {{\scriptsize$M$}};
\draw[color=green!50!black] (-1,-0.4) node (X) {{\scriptsize$A_i$}};
%
\draw[line width=1pt, color=white, fill] (-0.2,0.2) node[inner sep=2.5pt, draw,fill=white, rounded corners=1pt] (R2) {{\tiny$\mu$}};
\draw[line width=1pt, color=green!50!black] (-0.2,0.2) node[inner sep=2.5pt,draw, rounded corners=1pt] (R) {{\tiny$\mu$}};
%
\draw[line width=1pt, color=white, fill] (0.2,-1.05) node[inner sep=2pt, draw,fill=white, rounded corners=1pt] (R2) {{\tiny$\Delta$}};
\draw[line width=1pt, color=green!50!black] (0.2,-1.05) node[inner sep=2pt,draw, rounded corners=1pt] (R) {{\tiny$\Delta$}};
\draw[color=black] (-1.6,-0.1) node (X) {{\scriptsize$t_i$}};
\draw[line width=1pt, color=white, fill] (1.2,-0.3) node[inner sep=2.5pt, draw,fill=white, rounded corners=1pt] (R2) {{\tiny$\rho_i$}};
\draw[line width=1pt, color=blue!50!black] (1.2,-0.3) node[inner sep=2.5pt,draw, rounded corners=1pt] (R) {{\tiny$\rho_i$}};
\draw[line width=1pt, color=white, fill] (0.7,1.1) node[inner sep=2.5pt, draw,fill=white, rounded corners=1pt] (R2) {{\tiny$\rho_i$}};
\draw[line width=1pt, color=blue!50!black] (0.7,1.1) node[inner sep=2.5pt,draw, rounded corners=1pt] (R) {{\tiny$\rho_i$}};
\end{tikzpicture} 
&
\\
 & \mathcal{Z}^{\mathcal C}( N ) &   & \mathcal{Z}^{\text{RT,}\mathcal C} \big( N(\{t_i\},\dots) \big) &
\\
};
\path[font=\footnotesize] ($(m-1-1)+(1.6,0)$) edge[->] node[above] {$(1)$} (m-1-3);
\path[font=\footnotesize] ($(m-1-3)+(1.6,0)$) edge[->] node[above] {$(2)$} ($(m-1-4)+(-0.95,0)$);
\path[font=\footnotesize] ($(m-1-4)+(0.75,-1.7)$) edge[->] node[right] {$(3)$} ($(m-2-4)+(0.75,0.3)$);
\path[font=\footnotesize] ($(m-2-4.west)+(0,0)$) edge[->] node[below] {$(4)$} (m-2-2);
\end{tikzpicture}
$$
\caption{Rough outline of the construction of $\mathcal{Z}^{\mathcal C}(N)$. Only a patch of the defect bordism~$N$ with one 1-stratum and one 2-stratum is shown.} 
\label{fig:roughoutline} 
\end{figure}
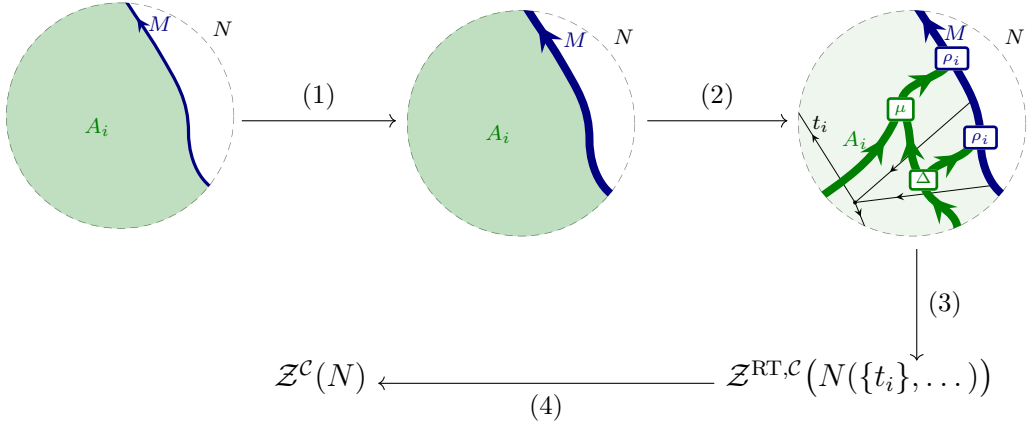

\begin{remark}
\begin{enumerate}
\item
Since the properties of $\Delta$-separable symmetric Frobenius algebras encode invariance under 2-dimensional Pachner moves, the value of $\zrt(N(\{t_i\},\dots))$ does not depend on the choice of triangulation in the interior of 2-strata, but only on the induced triangulation on the boundary $\partial N$, see \cite[Sect.\,5.1]{tft1}.
\item 
Since an $((A_1,\eps_1), \dots, (A_n,\eps_n), M)$-decorated 1-stratum ``sees'' the adjacent $A_i$-decorated 2-strata only up to cyclic symmetry, it is not immediately clear which of the twisted multi-modules $M^{\textrm{tw}_{j}}$ (recall Definition~\ref{def:twmm}) should be used to decorate the ribbons in steps (1) and (2) above. 
However, the notion of cyclic 
multi-modules is tailored to address this ambiguity. 
\end{enumerate}
\end{remark}

To begin to carry out the steps sketched above with proper care, we need the following auxiliary notion. 
It will serve as an intermediate ``reference point'' to deal with 
point (ii)
above and to evaluate $\zzc$ on $\DC$-decorated surfaces and bordisms. 

\begin{definition}
\label{def:stardec}
A \textsl{$*$-decoration} of a bordism~$N$ in $\Borddc$ is a choice of 3-stratum~$U$ for each $((A_1,\eps_1), \dots, (A_n,\eps_n), M)$-decorated 1-stratum~$L$, such that
\begin{enumerate}
\item
$U$ is adjacent to~$L$, 
\item 
$U$ is adjacent to 2-strata decorated by~$A_n$ and $A_1$ (there are~$n/k$ such pairs if $C_{n/k}$ is the maximal cyclic symmetry of~$M$).  
\end{enumerate}
For an object $\Sigma \in \Borddc$, a \textsl{$*$-decoration} is a choice of 2-stratum for each 0-stratum in~$\Sigma$ which is induced from a $*$-decoration of $\Sigma \times [0,1]$. 
\end{definition}

For example, for a cyclic multi-module ${}_{A_1\dots A_9}M$ with $k=3$ only the 3-strata between two 2-strata decorated~$A_3$ and~$A_1$ are allowed choices for a $*$-decoration. 
Hence a local neighbourhood of an ${}_{A_1\dots A_9}M$-decorated 1-stratum in a $*$-decorated bordism is a cylinder over one of these three configurations: 
\be
\begin{tikzpicture}[very thick,scale=0.9,color=blue!50!black, baseline=-0.1cm]
\filldraw[orange!10, line width=0] (0,0) circle (2);
\foreach \z in {0, 1, ..., 2}
{
	\draw[color=green!50!black, >=stealth, decoration={markings, mark=at position 0.7 with {\arrow{>}},}, postaction={decorate}] (0,0) -- (\z * 120 + 0 :2);
}
\foreach \z in {0, 1, ..., 2}
{
	\draw[color=green!50!black, >=stealth, decoration={markings, mark=at position 0.7 with {\arrow{>}},}, postaction={decorate}] (0,0) -- (\z * 120 + 40 :2);
}
\foreach \z in {0, 1, ..., 2}
{
	\draw[color=green!50!black, >=stealth, decoration={markings, mark=at position 0.65 with {\arrow{<}},}, postaction={decorate}] (0,0) -- (\z * 120 + 80 :2);
}
\fill[color=blue!50!black] (0,0) circle (3.5pt) node[below] {};
\fill[color=blue!50!black] (-0.1,-0.3) circle (0pt) node[below] {{\scriptsize $M$}};
\fill[color=blue!50!black] (-0.07,0.72) circle (0pt) node[below] {{\scriptsize $+$}};
\foreach \x in {0,...,2}
	\draw[line width=1] (120*\x:2.3) node[line width=0pt] (Xbottom) {{\footnotesize $A_1$}};
\foreach \x in {0,...,2}
	\draw[line width=1] (120*\x + 40:2.3) node[line width=0pt] (Xbottom) {{\footnotesize $A_2$}};
\foreach \x in {0,...,2}
	\draw[line width=1] (120*\x + 80:2.3) node[line width=0pt] (Xbottom) {{\footnotesize $A_3$}};
\draw[line width=1, color=black] (-20:1.5) node[line width=0pt] (Xbottom) {{\footnotesize $*$}};
\end{tikzpicture}%
\;\;
\begin{tikzpicture}[very thick,scale=0.9,color=blue!50!black, baseline=-0.1cm]
\filldraw[orange!10, line width=0] (0,0) circle (2);
\foreach \z in {0, 1, ..., 2}
{
	\draw[color=green!50!black, >=stealth, decoration={markings, mark=at position 0.7 with {\arrow{>}},}, postaction={decorate}] (0,0) -- (\z * 120 + 0 :2);
}
\foreach \z in {0, 1, ..., 2}
{
	\draw[color=green!50!black, >=stealth, decoration={markings, mark=at position 0.7 with {\arrow{>}},}, postaction={decorate}] (0,0) -- (\z * 120 + 40 :2);
}
\foreach \z in {0, 1, ..., 2}
{
	\draw[color=green!50!black, >=stealth, decoration={markings, mark=at position 0.65 with {\arrow{<}},}, postaction={decorate}] (0,0) -- (\z * 120 + 80 :2);
}
\fill[color=blue!50!black] (0,0) circle (3.5pt) node[below] {};
\fill[color=blue!50!black] (-0.1,-0.3) circle (0pt) node[below] {{\scriptsize $M$}};
\fill[color=blue!50!black] (-0.07,0.72) circle (0pt) node[below] {{\scriptsize $+$}};
\foreach \x in {0,...,2}
	\draw[line width=1] (120*\x:2.3) node[line width=0pt] (Xbottom) {{\footnotesize $A_1$}};
\foreach \x in {0,...,2}
	\draw[line width=1] (120*\x + 40:2.3) node[line width=0pt] (Xbottom) {{\footnotesize $A_2$}};
\foreach \x in {0,...,2}
	\draw[line width=1] (120*\x + 80:2.3) node[line width=0pt] (Xbottom) {{\footnotesize $A_3$}};
\draw[line width=1, color=black] (100:1.5) node[line width=0pt] (Xbottom) {{\footnotesize $*$}};
\end{tikzpicture}%
\;\;
\begin{tikzpicture}[very thick,scale=0.9,color=blue!50!black, baseline=-0.1cm]
\filldraw[orange!10, line width=0] (0,0) circle (2);
\foreach \z in {0, 1, ..., 2}
{
	\draw[color=green!50!black, >=stealth, decoration={markings, mark=at position 0.7 with {\arrow{>}},}, postaction={decorate}] (0,0) -- (\z * 120 + 0 :2);
}
\foreach \z in {0, 1, ..., 2}
{
	\draw[color=green!50!black, >=stealth, decoration={markings, mark=at position 0.7 with {\arrow{>}},}, postaction={decorate}] (0,0) -- (\z * 120 + 40 :2);
}
\foreach \z in {0, 1, ..., 2}
{
	\draw[color=green!50!black, >=stealth, decoration={markings, mark=at position 0.65 with {\arrow{<}},}, postaction={decorate}] (0,0) -- (\z * 120 + 80 :2);
}
\fill[color=blue!50!black] (0,0) circle (3.5pt) node[below] {};
\fill[color=blue!50!black] (-0.1,-0.3) circle (0pt) node[below] {{\scriptsize $M$}};
\fill[color=blue!50!black] (-0.07,0.72) circle (0pt) node[below] {{\scriptsize $+$}};
\foreach \x in {0,...,2}
	\draw[line width=1] (120*\x:2.3) node[line width=0pt] (Xbottom) {{\footnotesize $A_1$}};
\foreach \x in {0,...,2}
	\draw[line width=1] (120*\x + 40:2.3) node[line width=0pt] (Xbottom) {{\footnotesize $A_2$}};
\foreach \x in {0,...,2}
	\draw[line width=1] (120*\x + 80:2.3) node[line width=0pt] (Xbottom) {{\footnotesize $A_3$}};
\draw[line width=1, color=black] (220:1.5) node[line width=0pt] (Xbottom) {{\footnotesize $*$}};
\end{tikzpicture}%
\ee
Note that our convention is that the 2-strata adjacent to an ${}_{A_1\dots A_n}M$-decorated 1-stratum are ordered $A_1, \dots, A_n$ anticlockwise with respect to the orientation of the 1-stratum. 

\medskip

Naturally, ribbon graphs feature prominently also in our extension of the Reshetikhin-Turaev construction, and it makes a difference in which direction a ribbon is twisted. 
Below we will use the following simplified depiction of a counter-clockwise half-twist: 
\be
\label{eq:uglyhalftwistdefined}
\begin{tikzpicture}[very thick,scale=0.75,color=blue!50!black, baseline]
\newcommand{\brei}{0.15}
\fill[color=green!50!black] (\brei,1) -- (-\brei,1) -- (-\brei,0) -- (\brei,0);	
\draw[color=green!50!black, semithick, double distance=34.7*\brei pt] (0,-1) -- (0,0); 
\end{tikzpicture} 
\; 
:= 
\; 
\begin{tikzpicture}[very thick,scale=0.75,color=blue!50!black, baseline]
\newcommand{\brei}{0.15}
%
\fill[color=green!50!black] (0,0)
	to[out=60, in=270] (\brei,0.4)
	-- (\brei,1)
	-- (-\brei,1)
	-- (-\brei,0.4)
	to[out=270, in=120] (0,0);
\fill [pattern=north west lines, opacity=0.3] (\brei,-1)
	-- (\brei,-0.4)
	to[out=90, in=-60] (0,0)
	to[out=240, in=90] (-\brei,-0.4)
	-- (-\brei,-1);
\fill[color=green!50!black, opacity = 0.2] (\brei,-1)
	-- (\brei,-0.4)
	to[out=90, in=-60] (0,0)
	to[out=240, in=90] (-\brei,-0.4)
	-- (-\brei,-1);
\draw[line width=0.3pt, color=black] (\brei,-1) 		
							-- (\brei,-0.4)
							to[out=90, in=-60] (0,0)
							to[out=120, in=270] (-\brei,0.4)
							-- (-\brei,1);
\node[inner sep=1.5pt, fill, white] at (0,0) {};
\draw[line width=0.3pt, color=black] (-\brei,-1) 		
							-- (-\brei,-0.4)
							to[out=90, in=240] (0,0)
							to[out=60, in=270] (\brei,0.4)
							-- (\brei,1);
%
\end{tikzpicture} 
\ee
Here, the dark green colour is on the ``front side'' of the ribbon, i.\,e.\ the stretch where the ribbon 2-orientation matches that of the paper plane, and the lighter colour marks the ``back side'' where the ribbon orientation is opposite to that of the paper plane.
 
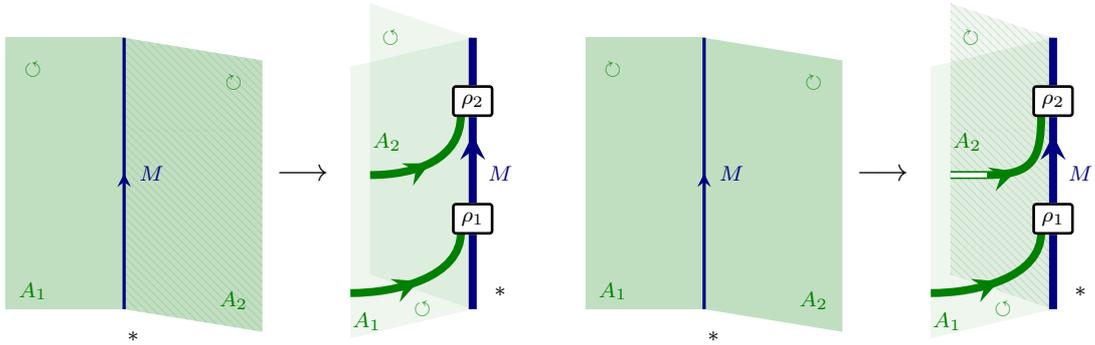
\begin{figure}[t]
\begin{align*}
\tikzzbox{\begin{tikzpicture}[very thick,scale=1.2,color=blue!50!black, baseline=-1.9cm]
%
\fill [green!50!black,opacity=0.25] (0,0) -- (-1.3,0) -- (-1.3,-3) -- (0,-3) -- (0,0);
\fill [pattern=north west lines, opacity=0.3] (0,0) -- ($(0,0)+(330:1.75 and 0.5)$) -- ($(0,-3)+(330:1.75 and 0.5)$) -- (0,-3) -- (0,0);
\fill [green!50!black,opacity=0.25] (0,0) -- ($(0,0)+(330:1.75 and 0.5)$) -- ($(0,-3)+(330:1.75 and 0.5)$) -- (0,-3) -- (0,0);
%
 \draw[
	color=blue!50!black, 
	>=stealth,
	decoration={markings, mark=at position 0.5 with {\arrow{>}},
					}, postaction={decorate}
	] 
(0,-3) -- (0,0);
%
%
\draw[line width=1, color=blue!50!black] (0.3, -1.5) node[line width=0pt] (beta) {{\scriptsize$M$}};
\draw[line width=1, color=green!50!black] (1.2, -2.9) node[line width=0pt] (beta) {{\scriptsize$A_2$}};
\draw[line width=1, color=green!50!black] (1.2, -0.5) node[line width=0pt] (beta) {{\scriptsize$\circlearrowright$}};
\draw[line width=1, color=green!50!black] (-1, -2.8) node[line width=0pt] (beta) {{\scriptsize$A_1$}};
\draw[line width=1, color=green!50!black] (-1, -0.35) node[line width=0pt] (beta) {{\scriptsize$\circlearrowleft$}};
\draw[line width=1, color=black] (0.1, -3.3) node[line width=0pt] (beta) {{\scriptsize$*$}};
\end{tikzpicture}}
\,
\lra 
\,
\tikzzbox{\begin{tikzpicture}[very thick,scale=1.2,color=blue!50!black, baseline=-1.9cm]
%
\fill [green!50!black,opacity=0.07] (0,0) -- ($(0,0)+(130:1.75 and 0.5)$) -- ($(0,-3)+(130:1.75 and 0.5)$) -- (0,-3) -- (0,0);
\fill [green!50!black,opacity=0.07] (0,0) -- ($(0,0)+(220:1.75 and 0.5)$) -- ($(0,-3)+(220:1.75 and 0.5)$) -- (0,-3) -- (0,0);
\draw[
	color=green!50!black, 
	line width=3pt, 
	>=stealth,
	decoration={markings, mark=at position 0.5 with {\arrow{>}},
					}, postaction={decorate}
	]
	($(0,-1.9)+(130:1.75 and 0.5)$) .. controls +(0.5,0) and +(0,-0.5) .. (-0.13,-0.85);
\draw[
	color=green!50!black, 
	line width=3pt, 
	>=stealth,
	decoration={markings, mark=at position 0.5 with {\arrow{>}},
					}, postaction={decorate}
	]
	($(0,-2.5)+(220:1.75 and 0.5)$) .. controls +(0.5,0) and +(0,-0.5) .. (-0.13,-2.15);
\fill[color=white] (0,-2) node[inner sep=3pt, fill, color=white, rounded corners=1pt] (R1) {{\scriptsize$\rho_1$}};
\draw[line width=1pt, color=black] (0,-2) node[inner sep=3pt,draw, rounded corners=1pt] (R1) {{\scriptsize$\rho_1$}};
\fill[color=white] (0,-0.7) node[inner sep=3pt, fill, color=white, rounded corners=1pt] (R2) {{\scriptsize$\rho_2$}};
\draw[line width=1pt, color=black] (0,-0.7) node[inner sep=3pt,draw, rounded corners=1pt] (R2) {{\scriptsize$\rho_2$}};
 \draw[
	color=blue!50!black, 
	line width=3pt, 
	>=stealth,
	decoration={markings, mark=at position 0.5 with {\arrow{<}},
					}, postaction={decorate}
	] 
(R2) -- (R1);
\draw[color=blue!50!black, line width=3pt] (0,-3) -- (R1);
\draw[color=blue!50!black, line width=3pt] (R2) -- (0,0);
%
%
\draw[line width=1, color=blue!50!black] (0.3, -1.5) node[line width=0pt] (beta) {{\scriptsize$M$}};
\draw[line width=1, color=green!50!black] (-0.94, -1.15) node[line width=0pt] (beta) {{\scriptsize$A_2$}};
\draw[line width=1, color=green!50!black] (-1.16, -3.14) node[line width=0pt] (beta) {{\scriptsize$A_1$}};
\draw[line width=1, color=green!50!black] (-0.9, 0) node[line width=0pt] (beta) {{\scriptsize$\circlearrowleft$}};
\draw[line width=1, color=green!50!black] (-0.55, -3) node[line width=0pt] (beta) {{\scriptsize$\circlearrowleft$}};
\draw[line width=1, color=black] (0.3, -2.8) node[line width=0pt] (beta) {{\scriptsize$*$}};
\end{tikzpicture}}
\qquad
\tikzzbox{\begin{tikzpicture}[very thick,scale=1.2,color=blue!50!black, baseline=-1.9cm]
%
\fill [green!50!black,opacity=0.25] (0,0) -- (-1.3,0) -- (-1.3,-3) -- (0,-3) -- (0,0);
\fill [green!50!black,opacity=0.25] (0,0) -- ($(0,0)+(330:1.75 and 0.5)$) -- ($(0,-3)+(330:1.75 and 0.5)$) -- (0,-3) -- (0,0);
%
 \draw[
	color=blue!50!black, 
	>=stealth,
	decoration={markings, mark=at position 0.5 with {\arrow{>}},
					}, postaction={decorate}
	] 
(0,-3) -- (0,0);
%
%
\draw[line width=1, color=blue!50!black] (0.3, -1.5) node[line width=0pt] (beta) {{\scriptsize$M$}};
\draw[line width=1, color=green!50!black] (1.2, -2.9) node[line width=0pt] (beta) {{\scriptsize$A_2$}};
\draw[line width=1, color=green!50!black] (1.2, -0.5) node[line width=0pt] (beta) {{\scriptsize$\circlearrowright$}};
\draw[line width=1, color=green!50!black] (-1, -2.8) node[line width=0pt] (beta) {{\scriptsize$A_1$}};
\draw[line width=1, color=green!50!black] (-1, -0.35) node[line width=0pt] (beta) {{\scriptsize$\circlearrowright$}};
\draw[line width=1, color=black] (0.1, -3.3) node[line width=0pt] (beta) {{\scriptsize$*$}};
\end{tikzpicture}}
\,
\lra 
\,
\tikzzbox{\begin{tikzpicture}[very thick,scale=1.2,color=blue!50!black, baseline=-1.9cm]
%
\fill [pattern=north west lines, opacity=0.3] (0,0) -- ($(0,0)+(130:1.75 and 0.5)$) -- ($(0,-3)+(130:1.75 and 0.5)$) -- (0,-3) -- (0,0);
\fill [green!50!black,opacity=0.07] (0,0) -- ($(0,0)+(130:1.75 and 0.5)$) -- ($(0,-3)+(130:1.75 and 0.5)$) -- (0,-3) -- (0,0);
\fill [green!50!black,opacity=0.07] (0,0) -- ($(0,0)+(220:1.75 and 0.5)$) -- ($(0,-3)+(220:1.75 and 0.5)$) -- (0,-3) -- (0,0);
\draw[
	color=green!50!black, 
	line width=3pt, 
	>=stealth, 
	semithick, double distance=1.8pt
	]
	($(0,-1.9)+(130:1.75 and 0.5)$) -- ($(0.4,-1.9)+(130:1.75 and 0.5)$);
\draw[
	color=green!50!black, 
	line width=3pt, 
	>=stealth,
	decoration={markings, mark=at position 0.3 with {\arrow{>}},
					}, postaction={decorate}
	]
	($(0.4,-1.9)+(130:1.75 and 0.5)$) .. controls +(0.5,0) and +(0,-0.5) .. (-0.13,-0.85);
\draw[
	color=green!50!black, 
	line width=3pt, 
	>=stealth,
	decoration={markings, mark=at position 0.5 with {\arrow{>}},
					}, postaction={decorate}
	]
	($(0,-2.5)+(220:1.75 and 0.5)$) .. controls +(0.5,0) and +(0,-0.5) .. (-0.13,-2.15);
\fill[color=white] (0,-2) node[inner sep=3pt, fill, color=white, rounded corners=1pt] (R1) {{\scriptsize$\rho_1$}};
\draw[line width=1pt, color=black] (0,-2) node[inner sep=3pt,draw, rounded corners=1pt] (R1) {{\scriptsize$\rho_1$}};
\fill[color=white] (0,-0.7) node[inner sep=3pt, fill, color=white, rounded corners=1pt] (R2) {{\scriptsize$\rho_2$}};
\draw[line width=1pt, color=black] (0,-0.7) node[inner sep=3pt,draw, rounded corners=1pt] (R2) {{\scriptsize$\rho_2$}};
 \draw[
	color=blue!50!black, 
	line width=3pt, 
	>=stealth,
	decoration={markings, mark=at position 0.5 with {\arrow{<}},
					}, postaction={decorate}
	] 
(R2) -- (R1);
\draw[color=blue!50!black, line width=3pt] (0,-3) -- (R1);
\draw[color=blue!50!black, line width=3pt] (R2) -- (0,0);
%
%
\draw[line width=1, color=blue!50!black] (0.3, -1.5) node[line width=0pt] (beta) {{\scriptsize$M$}};
\draw[line width=1, color=green!50!black] (-0.94, -1.15) node[line width=0pt] (beta) {{\scriptsize$A_2$}};
\draw[line width=1, color=green!50!black] (-1.16, -3.14) node[line width=0pt] (beta) {{\scriptsize$A_1$}};
\draw[line width=1, color=green!50!black] (-0.9, 0) node[line width=0pt] (beta) {{\scriptsize$\circlearrowright$}};
\draw[line width=1, color=green!50!black] (-0.55, -3) node[line width=0pt] (beta) {{\scriptsize$\circlearrowleft$}};
\draw[line width=1, color=black] (0.3, -2.8) node[line width=0pt] (beta) {{\scriptsize$*$}};
\end{tikzpicture}}
\end{align*}
\caption{Two examples of orienting ribbons relative to $*$-decoration; the right picture involves the half-twist of~\eqref{eq:halftwistAi}.} 
\label{fig:ribbonoriillu} 
\end{figure}

\medskip

We now describe how to turn a decorated stratified bordism into a bordism with ribbon graph on which the Reshetikhin-Turaev TQFT can act.

\begin{construction}
\label{con:ZRTC}
Let $N \colon \Sigma_1 \to \Sigma_2$ be a bordism in $\Borddc$. 
\begin{enumerate}
\item
Choose a $*$-decoration $*$ of $N$. 
\item 
Choose a triangulation\footnote{Here and below by ``triangulation'' we always mean ``triangulation with orientation induced from a choice of total order on the vertices'', as described e.\,g.~in \cite[Sect.\,3.1]{CRS1}. Then the Poincar\'{e} dual of a triangulation inherits an orientation, and we can decorate its 
	negatively (resp.~positively) 
oriented 0-strata with the multiplication (resp.~comultiplication) of Frobenius algebras.} $t_i$ for each $A_i$-decorated 2-stratum of $N$. 
\item 
Decorate the interior of the Poincar\'{e} dual of every triangulation~$t_i$ with the data $(A_i,\mu_i,\Delta_i)$ of the $\Delta$-separable symmetric Frobenius algebra~$A_i$. 
\item
Thicken the resulting $M$- and $A_i$-decorated lines to ribbons. 
In detail: 
	\begin{enumerate}
	\item
	In the interior of an $A_i$-decorated 2-stratum~$F$, give all $A_i$-ribbons the same 2-orientation as~$F$. 
	\item 
	In a neighbourhood of an $M$-decorated 1-stratum~$L$, consider a chart in which all adjacent 2-strata are to the left of the upward-oriented~$L$, and the $*$-decorated 3-stratum is to the right (see Figure~\ref{fig:Aiorientations}). 
	In this chart, orient $M$ in the paper plane and connect all $A_i$-ribbons to~$M$ with the coupons~$\rho_i$ as follows:
		\begin{itemize}
		\item 
		If an $A_i$-decorated 2-stratum~$F$ has the same orientation as the adjacent $M$-ribbon (i.\,e.~iff $\eps_i=+$, see Figure~\ref{fig:Aiorientations}\,(i)), connect the $A_i$-ribbons in the interior of~$F$ with those near~$M$ directly. 
		\item
		If the orientations do not agree ($\eps_i=-$, Figure~\ref{fig:Aiorientations}\,(ii)), perform the half-twist of~\eqref{eq:uglyhalftwistdefined} 
		on~$A_i$ near~$M$ before connecting: 
		\be
		\label{eq:halftwistAi}
\begin{tikzpicture}[very thick,scale=0.75,color=blue!50!black, baseline]
\draw (0,-1) node[below] (X) {};
\draw[line width=1pt, color=black] (0,0.5) node[inner sep=4pt,draw, rounded corners=1pt] (R) {{\scriptsize$\;\rho_i\;$}};
\draw[line width=3pt] (0,-1.5) -- (R); 
\draw[line width=3pt] (0,1.5) -- (R); 
\draw[color=green!50!black, line width=3pt] (-1,-1) .. controls +(0,0.5) and +(0,-0.5) .. (-0.28,0.2);
\draw (0,-1.5) node[below] (X) {{\scriptsize$M$}};
\draw[color=green!50!black] (-1,-1.5) node[below] (A1) {{\scriptsize$A_i$}};
\draw[line width=1pt, color=black] (0,0.5) node[inner sep=4pt,draw, rounded corners=1pt] (R) {{\scriptsize$\;\rho_i\;$}};
\draw[color=green!50!black, semithick, double distance=1.8pt] (-1,-1.5) -- (-1,-1.0); 
\end{tikzpicture} 
\ee
\end{itemize}
\end{enumerate}
This produces a bordism with embedded ribbon graph 
\be
\label{eq:NSS}
N ( t, *)  \colon
\Sigma_1( \tau_1, *_1)
\lra
\Sigma_2( \tau_2, *_2) 
\ee
where $t$ denotes the totality of all triangulations~$t_i$, inducing the triangulations $\tau_1, \tau_2$ of 1-strata on the boundaries $\Sigma_1, \Sigma_2$, respectively, while the $*$-decorations $*_1, *_2$ are similarly induced by~$*$. 
\item 
Apply the Reshetikhin-Turaev construction to obtain a linear map
\be
\label{eq:ZRTCstar}
\zrt\big( N ( t, *) \big) \colon
\zrt \big( \Sigma_1( \tau_1, *_1) \big)
\lra
\zrt\big( \Sigma_2( \tau_2, *_2) \big)
\, . 
\ee
\end{enumerate}
\end{construction}

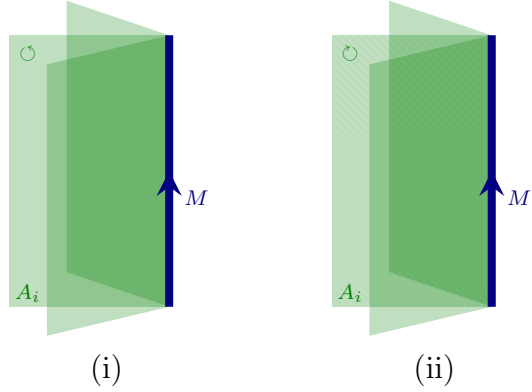
\begin{figure}[t]
\centering
\begin{tabular}{cc}
\tikzzbox{\begin{tikzpicture}[very thick,scale=1.2,color=blue!50!black, baseline=-1.9cm]
%
\fill [green!50!black,opacity=0.25] (0,0) -- ($(0,0)+(130:1.75 and 0.5)$) -- ($(0,-3)+(130:1.75 and 0.5)$) -- (0,-3) -- (0,0);
\fill [green!50!black,opacity=0.25] (0,0) -- ($(0,0)+(180:1.75 and 0.5)$) -- ($(0,-3)+(180:1.75 and 0.5)$) -- (0,-3) -- (0,0);
\fill [green!50!black,opacity=0.25] (0,0) -- ($(0,0)+(220:1.75 and 0.5)$) -- ($(0,-3)+(220:1.75 and 0.5)$) -- (0,-3) -- (0,0);
%
 \draw[
	color=blue!50!black, 
	line width=3pt, 
	>=stealth,
	decoration={markings, mark=at position 0.5 with {\arrow{>}},
					}, postaction={decorate}
	] 
(0,-3) -- (0,0);
%
\draw[line width=1, color=blue!50!black] (0.3, -1.8) node[line width=0pt] (beta) {{\scriptsize$M$}};
\draw[line width=1, color=green!50!black] (-1.55, -2.85) node[line width=0pt] (beta) {{\scriptsize$A_i$}};
\draw[line width=1, color=green!50!black] (-1.55, -0.2) node[line width=0pt] (beta) {{\scriptsize$\circlearrowleft$}};
\end{tikzpicture}}
\hspace{1cm}
& 
\tikzzbox{\begin{tikzpicture}[very thick,scale=1.2,color=blue!50!black, baseline=-1.9cm]
%
\fill [green!50!black,opacity=0.25] (0,0) -- ($(0,0)+(130:1.75 and 0.5)$) -- ($(0,-3)+(130:1.75 and 0.5)$) -- (0,-3) -- (0,0);
\fill [pattern=north west lines, opacity=0.2] (0,0) -- ($(0,0)+(180:1.75 and 0.5)$) -- ($(0,-3)+(180:1.75 and 0.5)$) -- (0,-3) -- (0,0);
\fill [green!50!black,opacity=0.25] (0,0) -- ($(0,0)+(180:1.75 and 0.5)$) -- ($(0,-3)+(180:1.75 and 0.5)$) -- (0,-3) -- (0,0);
\fill [green!50!black,opacity=0.25] (0,0) -- ($(0,0)+(220:1.75 and 0.5)$) -- ($(0,-3)+(220:1.75 and 0.5)$) -- (0,-3) -- (0,0);
%
 \draw[
	color=blue!50!black, 
	line width=3pt, 
	>=stealth,
	decoration={markings, mark=at position 0.5 with {\arrow{>}},
					}, postaction={decorate}
	] 
(0,-3) -- (0,0);
%
\draw[line width=1, color=blue!50!black] (0.3, -1.8) node[line width=0pt] (beta) {{\scriptsize$M$}};
\draw[line width=1, color=green!50!black] (-1.55, -2.85) node[line width=0pt] (beta) {{\scriptsize$A_i$}};
\draw[line width=1, color=green!50!black] (-1.55, -0.2) node[line width=0pt] (beta) {{\scriptsize$\circlearrowright$}};
\end{tikzpicture}}
\\
\addlinespace
(i) \hspace{1cm} & (ii)
\\
\end{tabular}
\caption{An $A_i$-decorated 2-stratum can have the same (i) or opposite (ii) orientation as the adjacent $M$-ribbon.} 
\label{fig:Aiorientations} 
\end{figure}

The source and target vector spaces of the map \eqref{eq:ZRTCstar} depend both on the choice of triangulations and on the $*$-decoration of~$N$. 
We will later remove this dependence via a limit construction. 
Before doing this we establish that $\zrt(N ( t, *))$ is independent of choices not visible on the boundary:
	
\begin{lemma}
\label{lem:Z-indep-of-interior-choices}
The map \eqref{eq:ZRTCstar} is invariant under isomorphisms of~$N$ and independent of the choice of triangulations in the interior of~$N$, and of the choice of $*$-decoration for each 1-stratum entirely contained in the interior of~$N$. 
\end{lemma}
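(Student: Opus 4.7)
My plan is to split the lemma into its three constituent claims and handle them separately.

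\textbf{Isomorphism invariance.} An isomorphism $f \colon N \to N'$ of decorated stratified bordisms transports the data $(t,*)$ to data $(f_*t, f_**)$ on $N'$, and lifts to an isomorphism $N(t,*) \to N'(f_*t, f_**)$ of ribbon bordisms in $\Bordriben{3}(\Cat{C})$ (since Construction~\ref{con:ZRTC} is natural: each step uses only intrinsic data of the stratification that is preserved by $f$). Functoriality of $\zrt$ then yields equality of the associated linear maps, and the same argument applied to $N = N'$ gives invariance under automorphisms fixing $(t,*)$.

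\textbf{Triangulation invariance in the interior.} Two triangulations of a $2$-stratum which agree on the boundary are connected by a finite sequence of Pachner 2-2 and 1-3 moves performed in the interior. For each such move, the corresponding modification of the $A_i$-decorated Poincar\'e dual ribbon graph is a consequence of the $\Delta$-separable symmetric Frobenius axioms: associativity, coassociativity and the Frobenius relation handle the 2-2 move; the identity $\mu \circ \Delta = \id_{A_i}$ handles the 1-3 move; and symmetry of the Frobenius structure ensures invariance under reversing the induced orientations on the dual $1$-simplices. The detailed verification that $\zrt$ is unchanged under these local replacements is the one carried out in \cite[Sect.\,5]{tft1}.

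\textbf{Independence from $*$-decoration for interior 1-strata.} Since $\Borddc$-morphisms have no 0-strata, any 1-stratum $L$ entirely contained in the interior is a circle. Let $L$ carry a cyclic $(A_1^{\eps_1},\dots,A_n^{\eps_n})$-multi-module $M$ with maximal cyclic symmetry $C_{n/k}$ and cyclic isomorphism $\varphi \colon M^{\mathrm{tw}_k} \to M$ satisfying $\varphi^{n/k} = \theta_M^{-1}$. The set of valid $*$-decorations on $L$ is a $C_{n/k}$-torsor, so it suffices to check invariance under an elementary step $* \to *'$ that shifts $*$ across one adjacent $(A_i,\eps_i)$-labelled 2-stratum. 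Locally this step modifies the ribbon graph near $L$ by the flip depicted in Remark~\ref{rem:sym}: the $A_i$-ribbon is pushed past the $M$-ribbon, replacing the coupon $\rho_i$ with $\rho_i^{\mathrm{tw}}$ (the half-twist of~\eqref{eq:halftwistAi} arising when $\eps_i = -$ is absorbed using the symmetry axiom of $A_i$). Under $\zrt$, this replacement is compensated by inserting the cyclic automorphism $\varphi$ at the corresponding point of the $M$-ribbon, using Lemma~\ref{lem:thetaisomulti} and Definition~\ref{def:twmm}. Traversing the full cyclic orbit around $L$ amounts to $n/k$ elementary steps, and the $M$-ribbon, making one complete loop, accumulates exactly the twist $\theta_M$; the defining equation $\varphi^{n/k} = \theta_M^{-1}$ then makes the total modification equal to the identity, and the ribbon isotopy invariance of $\zrt$ closes the argument.

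\textbf{Main obstacle.} The essential work is in the third part, where the three-dimensional geometric operation of rotating $*$ across a 2-stratum must be translated into a precise string-diagrammatic equality in $\Cat{C}$ that keeps track of all factors of $\theta_M$, $\varphi$, and the braidings and half-twists induced by the signs $\eps_i$. The key algebraic input -- the cyclic equivariance condition $\varphi^{n/k} = \theta_M^{-1}$ of Definition~\ref{def:symmetricmodule} -- is precisely designed to make this cancellation work, so the remaining difficulty is bookkeeping rather than conceptual.
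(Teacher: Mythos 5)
Your first part (isomorphism invariance via functoriality of $\zrt$) matches the paper. Your second part (Pachner moves and the Frobenius axioms) is also essentially the paper's argument, but you omit what the paper treats as the actual content: verifying that the half-twist convention of~\eqref{eq:halftwistAi} makes the construction triangulation-invariant when $\eps_i = -$, i.\,e.\ when the 2-stratum is oppositely oriented and~$M$ is an $A_i^{\mathrm{op}}$-module. This requires the specific ribbon-graph identity~\eqref{eq:halfassed} relating a multiplication coupon composed with half-twists to $\mu^{\mathrm{op}}$; your remark that ``symmetry of the Frobenius structure ensures invariance under reversing the induced orientations on the dual 1-simplices'' addresses a different (and more routine) concern and does not supply this check.

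The third part contains a genuine gap. First, a minor point: an ``elementary step'' between valid $*$-decorations (which by Definition~\ref{def:stardec} must sit between $A_n$- and $A_1$-labelled 2-strata) shifts by~$k$ 2-strata, not one. More seriously, you invoke the equation $\varphi^{n/k} = \theta_M^{-1}$ to argue that traversing the full cyclic orbit gives the identity; but this does not establish that a single shift $* \to *'$ leaves $\zrt$ unchanged -- it is a consistency statement about the composition of the comparison maps~$\Psi$, which the paper proves separately as Lemma~\ref{lem:directed-system}. For the interior-circle case the paper needs no appeal to $\varphi^{n/k}=\theta_M^{-1}$ at all: it observes that the $*'$-graph is isotopic to the $*$-graph with~$M$ replaced by the twisted multi-module $M^{\mathrm{tw}_j}$, inserts the cancelling pair $\varphi_j \circ \varphi_j^{-1} = \id_M$, and drags one factor all the way around the circle, using only that~$\varphi_j$ is a multi-module isomorphism. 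Your version, by inserting a single~$\varphi$ without a compensating $\varphi^{-1}$ and then trying to make it vanish by a full-orbit argument, does not actually compare the two ribbon graphs you set out to compare.
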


\begin{proof}
The invariance under isomorphisms of the stratified manifold $N$ relative to the boundary follows from the invariance of the Reshitikhin-Turaev construction under homeomorphisms of $N$.

\textsl{Triangulation:}
If the orientation of $A_i$-ribbons agrees with that of the associated $M$-ribbon, independence of triangulation in the interior of 2- and 1-strata is a direct consequence of the~$A_i$ being $\Delta$-separable symmetric Frobenius algebras and~$M$ being $A_i$-modules, respectively. 
What remains to be verified is that the prescription \eqref{eq:halftwistAi} for oppositely oriented 2-strata makes the construction triangulation invariant also in the case $\eps_i=-$, when~$M$ is an $A_i^{\text{op}}$-module. 

Recall our convention \eqref{eq:muop} for the multiplication of opposite algebras, and the identity
\be
\label{eq:halfassed}
\begin{tikzpicture}[line width=3pt,scale=0.75,color=blue!50!black, baseline]
\fill[color=green!50!black] (-0.85,0) circle (4pt) node (mult1) {};
\draw[color=green!50!black, line width=3pt] (-1.2,-0.3) .. controls +(0,0.25) and +(0,0) .. (-0.85,0);
\draw[color=green!50!black, line width=3pt] (-1.2,-0.3) .. controls +(0,-0.25) and +(0,0.25) .. (-0.5,-1);
\draw[color=green!50!black, line width=3pt] (-0.5,-0.3) .. controls +(0,0.25) and +(0,0) .. (-0.85,0);
\draw[color=white, line width=5pt] (-0.5,-0.3) .. controls +(0,-0.25) and +(0,0.25) .. (-1.2,-1);
\draw[color=green!50!black, line width=3pt] (-0.5,-0.3) .. controls +(0,-0.25) and +(0,0.25) .. (-1.2,-1);
\draw[color=green!50!black, line width=3pt] (-0.85,0) -- (-0.85,0.75);
\draw[color=green!50!black, semithick, double distance=1.8pt] (-0.5,-1.5) -- (-0.5,-1); 
\draw[color=green!50!black, semithick, double distance=1.8pt] (-1.2,-1.5) -- (-1.2,-1); 
\draw[line width=1pt, color=white, fill] (-0.85,0) node[inner sep=1.5pt,draw,fill=white] (R2) {{\tiny$\mu$}};
\draw[line width=1.2pt, color=green!50!black] (-0.85,0) node[inner sep=1.5pt,draw, rounded corners=1pt] (R) {{\tiny$\mu$}};
\end{tikzpicture} 
=
\begin{tikzpicture}[line width=3pt,scale=0.75,color=blue!50!black, baseline]
\draw[color=green!50!black] (-0.85,0.75) -- (-0.85,0); 
\draw[color=green!50!black, semithick, double distance=1.8pt] (-0.85,-0.5) -- (-0.85,0); 
\draw[color=green!50!black, semithick, double distance=1.8pt] (-0.5,-1.5) -- (-0.5,-0.8); 
\draw[color=green!50!black, semithick, double distance=1.8pt] (-1.2,-1.5) -- (-1.2,-0.8); 
\draw[color=green!50!black, semithick, double distance=1.8pt] (-1.2,-0.8) .. controls +(0,0.25) and +(0,0) .. (-0.85,-0.5);
\draw[color=green!50!black, semithick, double distance=1.8pt] (-0.5,-0.8) .. controls +(0,0.25) and +(0,0) .. (-0.85,-0.5);
\draw[line width=1pt, color=white, fill] (-0.85,-0.5) node[inner sep=1.5pt,draw,fill=white] (R2) {{\tiny$\mu$}};
\draw[line width=1pt, color=green!50!black] (-0.85,-0.5) node[inner sep=1.5pt,draw, rounded corners=1pt] (R) {{\tiny$\mu$}};
\end{tikzpicture} 
\ee
of ribbon graphs, 
where we continue to use the notation of \eqref{eq:uglyhalftwistdefined}. 
Then we compute 
\be
\begin{tikzpicture}[very thick,scale=0.75,color=blue!50!black, baseline]
\draw (0,-1.5) node[below] (X) {{\scriptsize$M$}};
\draw[color=green!50!black] (-0.5,-1.5) node[below] (X) {{\scriptsize$A_i$}};
\draw[color=green!50!black] (-1,-1.5) node[below] (X) {{\scriptsize$A_i$}};
\draw (0,-1) node[below] (X) {};
\draw[color=green!50!black] (-0.5,-1) node[below] (X) {};
\draw[color=green!50!black] (-1,-1) node[below] (X) {};
\draw[line width=3pt] (0,-1.5) -- (0,1); 
\draw[color=green!50!black, semithick, double distance=1.8pt] (-0.5,-1.5) -- (-0.5,-1); 
\draw[color=green!50!black, semithick, double distance=1.8pt] (-1,-1.5) -- (-1,-1); 
\draw[color=green!50!black, line width=3pt] (-0.5,-1) .. controls +(0,0.25) and +(-0.25,-0.25) .. (0,-0.25);
\draw[color=green!50!black, line width=3pt] (-1,-1) .. controls +(0,0.5) and +(-0.5,-0.5) .. (0.05,0.6);
\draw[line width=1pt, color=white, fill] (0,0.6) node[inner sep=1.5pt,draw,fill=white, rounded corners=1pt] (R2) {{\tiny$\rho_i$}};
\draw[line width=1pt, color=blue!50!black] (0,0.6) node[inner sep=1.5pt,draw, rounded corners=1pt] (R) {{\tiny$\rho_i$}};
\draw[line width=1pt, color=white, fill] (0,-0.25) node[inner sep=1.5pt,draw,fill=white, rounded corners=1pt] (R2) {{\tiny$\rho_i$}};
\draw[line width=1pt, color=blue!50!black] (0,-0.25) node[inner sep=1.5pt,draw, rounded corners=1pt] (R) {{\tiny$\rho_i$}};
\end{tikzpicture} 
=
\begin{tikzpicture}[line width=3pt,scale=0.75,color=blue!50!black, baseline]
\draw (0,-1.5) node[below] (X) {{\scriptsize$M$}};
\draw[color=green!50!black] (-0.5,-1.5) node[below] (X) {{\scriptsize$A_i$}};
\draw[color=green!50!black] (-1.2,-1.5) node[below] (X) {{\scriptsize$A_i$}};
\draw (0,-1) node[below] (X) {};
\draw[color=green!50!black] (-0.5,-1) node[below] (X) {};
\draw[color=green!50!black] (-1,-1) node[below] (X) {};
\draw[line width=3pt] (0,-1.5) -- (0,1); 
\draw[color=green!50!black, semithick, double distance=1.8pt] (-0.5,-1.5) -- (-0.5,-1); 
\draw[color=green!50!black, semithick, double distance=1.8pt] (-1.2,-1.5) -- (-1.2,-1); 
\draw[color=green!50!black] (-0.5,-1) .. controls +(0,0.5) and +(0,0) .. (-0.85,0);
\draw[color=green!50!black] (-1.2,-1) .. controls +(0,0.5) and +(0,0) .. (-0.85,0);
\draw[color=green!50!black] (-0.85,0) .. controls +(0,0.5) and +(0,0) .. (0,0.6);
\draw[line width=1pt, color=white, fill] (0,0.6) node[inner sep=1.5pt,draw,fill=white, rounded corners=1pt] (R2) {{\tiny$\rho_i$}};
\draw[line width=1pt, color=blue!50!black] (0,0.6) node[inner sep=1.5pt,draw, rounded corners=1pt] (R) {{\tiny$\rho_i$}};
\draw[line width=1pt, color=white, fill] (-0.85,0) node[inner sep=1.5pt,draw,fill=white] (R2) {{\tiny$\mu$}};
\draw[line width=1pt, color=green!50!black] (-0.85,0) node[inner sep=1.5pt,draw, rounded corners=1pt] (R) {{\tiny$\mu^{\text{op}}$}};
\end{tikzpicture} 
=
\begin{tikzpicture}[line width=3pt,scale=0.75,color=blue!50!black, baseline]
\draw (0,-1.5) node[below] (X) {{\scriptsize$M$}};
\draw[color=green!50!black] (-0.5,-1.5) node[below] (X) {{\scriptsize$A_i$}};
\draw[color=green!50!black] (-1.2,-1.5) node[below] (X) {{\scriptsize$A_i$}};
\draw (0,-1.5) -- (0,1); 
\fill[color=green!50!black] (0,0.6) circle (2.5pt) node (meet2) {};
\draw[color=green!50!black, line width=3pt] (-1.2,-0.3) .. controls +(0,0.25) and +(0,0) .. (-0.85,0);
\draw[color=green!50!black, line width=3pt] (-1.2,-0.3) .. controls +(0,-0.25) and +(0,0.25) .. (-0.5,-1);
\draw[color=green!50!black, line width=3pt] (-0.5,-0.3) .. controls +(0,0.25) and +(0,0) .. (-0.85,0);
\draw[color=white, line width=5pt] (-0.5,-0.3) .. controls +(0,-0.25) and +(0,0.25) .. (-1.2,-1);
\draw[color=green!50!black, line width=3pt] (-0.5,-0.3) .. controls +(0,-0.25) and +(0,0.25) .. (-1.2,-1);
\draw[color=green!50!black, line width=3pt] (-0.85,0) .. controls +(0,0.5) and +(0,0) .. (0,0.6);
\draw[line width=1pt, color=white, fill] (0,0.6) node[inner sep=1.5pt,draw,fill=white, rounded corners=1pt] (R2) {{\tiny$\rho_i$}};
\draw[line width=1pt, color=blue!50!black] (0,0.6) node[inner sep=1.5pt,draw, rounded corners=1pt] (R) {{\tiny$\rho_i$}};
\draw[line width=1pt, color=white, fill] (-0.85,0) node[inner sep=1.5pt,draw,fill=white] (R2) {{\tiny$\mu$}};
\draw[line width=1pt, color=green!50!black] (-0.85,0) node[inner sep=1.5pt,draw, rounded corners=1pt] (R) {{\tiny$\mu$}};
\draw[color=green!50!black, semithick, double distance=1.8pt] (-0.5,-1.5) -- (-0.5,-1); 
\draw[color=green!50!black, semithick, double distance=1.8pt] (-1.2,-1.5) -- (-1.2,-1); 
\end{tikzpicture} 
=
\begin{tikzpicture}[line width=3pt,scale=0.75,color=blue!50!black, baseline]
\draw (0,-1.5) node[below] (X) {{\scriptsize$M$}};
\draw[color=green!50!black] (-0.5,-1.5) node[below] (X) {{\scriptsize$A_i$}};
\draw[color=green!50!black] (-1.2,-1.5) node[below] (X) {{\scriptsize$A_i$}};
\draw (0,-1.5) -- (0,1); 
\draw[color=green!50!black, semithick, double distance=1.8pt] (-1.2,-0.8) .. controls +(0,0.25) and +(0,0) .. (-0.85,-0.5);
\draw[color=green!50!black, semithick, double distance=1.8pt] (-0.5,-0.8) .. controls +(0,0.25) and +(0,0) .. (-0.85,-0.5);
\draw[color=green!50!black, semithick, double distance=1.8pt] (-0.5,-1.5) -- (-0.5,-0.8); 
\draw[color=green!50!black, semithick, double distance=1.8pt] (-1.2,-1.5) -- (-1.2,-0.8); 
\draw[color=green!50!black, semithick, double distance=1.8pt] (-0.85,-0.5) -- (-0.85,0); 
\fill[color=green!50!black] (-0.85,-0.5) circle (4pt) node (mult1) {};
\draw[color=green!50!black, line width=3pt] (-0.85,0) .. controls +(0,0.5) and +(0,0) .. (0,0.6);
\draw[line width=1pt, color=white, fill] (0,0.6) node[inner sep=1.5pt,draw,fill=white, rounded corners=1pt] (R2) {{\tiny$\rho_i$}};
\draw[line width=1pt, color=blue!50!black] (0,0.6) node[inner sep=1.5pt,draw, rounded corners=1pt] (R) {{\tiny$\rho_i$}};
\draw[line width=1pt, color=white, fill] (-0.85,-0.5) node[inner sep=1.5pt,draw,fill=white] (R2) {{\tiny$\mu$}};
\draw[line width=1pt, color=green!50!black] (-0.85,-0.5) node[inner sep=1.5pt,draw, rounded corners=1pt] (R) {{\tiny$\mu$}};
\end{tikzpicture} 
\ee
where we used that~$M$ is an $A_i^{\text{op}}$-module in the first step, and \eqref{eq:halfassed} in the last step. 
This shows triangulation invariance. 

\textsl{$*$-decoration:} 1-strata in the interior of $N$ are necessarily circles.
Choosing a different $*$-decoration around a 1-stratum which is labelled with a multi-module~$M$ results in a ribbon graph which is isotopic to the graph where we keep the $*$-decoration while changing the label to a twisted module $M^{\text{tw}_j}$, see Remark~\ref{rem:sym} and Definition~\ref{def:twmm}. 
Thanks to the cyclic symmetry in Definition~\ref{def:cyclic-with-signs}, 
the two modules are isomorphic and inserting a pair $\varphi_j \circ \varphi_j^{-1} = \id_M$ and dragging one of the two around the circle shows the claim.
\end{proof}

It remains to find coherent 
	linear maps between the vector spaces
associated to surfaces $\Sigma(\tau,*), \Sigma(\tilde\tau,\tilde *) \in \Borddc$ with different $*$-decorations $*$, $\tilde *$ and triangulations $\tau$, $\tau'$.
We will describe these by cylinders $C_\Sigma = \Sigma \times [0,1]$ with appropriate ribbon graphs. 

We start by describing how to treat line defects in $C_\Sigma$.
Let~$P$ be an ${}_{A_1 \dots A_n}M$-decorated 0-stratum on~$\Sigma$ with orientation~$+$. 
By part (iv) of Construction~\ref{con:ZRTC}, close to each boundary we need to insert an $M$-ribbon in the wedge labelled~$*$, oriented as in Figure~\ref{fig:relatestars} (or, equivalently, Figure~\ref{fig:Aiorientations}). 
In passing from the incoming to the outgoing boundary of $C_\Sigma$, we first rotate the $M$-ribbon starting at the incoming boundary counter-clockwise until it lies in the plane of the outgoing ribbon. Afterwards we insert the appropriate isomorphism from the equivariant structure of $M$: If the minimal cyclic generator is $k$ and we passed $jk$ surface defects in going from $*$ to $\tilde *$, we insert $\varphi^j$. 
Figure~\ref{fig:relatestars} gives an example.

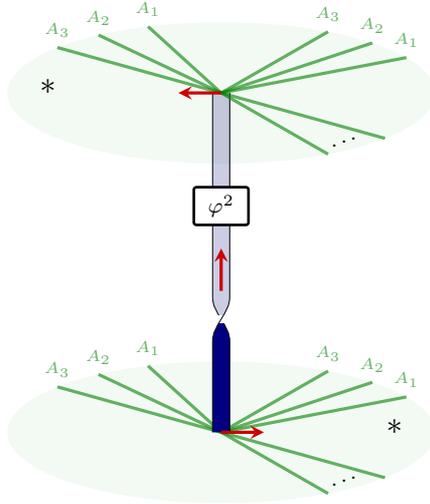
\begin{figure}[t]
\begin{align*}
\begin{tikzpicture}[very thick,scale=0.75,color=blue!50!black, baseline]
%
\newcommand{\yco}{-2}
\fill[color=green!50!black, opacity=0.05] (0,\yco) ellipse (3.75 and 1.25);
\draw[color=black] ($(5:3.05 and 1.25) + (0,\yco)$) node[line width=0pt] (beta) {{$*$}};
\draw[color=green!50!black, opacity=0.6] ($(30:3.75 and 1.25) + (0,\yco)$) -- (0,\yco);
\draw[color=green!50!black, opacity=0.6] ($(30:3.75 and 1.25) + (0,\yco)$) node[line width=0pt, above] (beta) {{\tiny$A_1$}};
\draw[color=green!50!black, opacity=0.6] ($(45:3.75 and 1.25) + (0,\yco)$) -- (0,\yco);
\draw[color=green!50!black, opacity=0.6] ($(45:3.75 and 1.25) + (0,\yco)$) node[line width=0pt, above] (beta) {{\tiny$A_2$}};
\draw[color=green!50!black, opacity=0.6] ($(60:3.75 and 1.25) + (0,\yco)$) -- (0,\yco);
\draw[color=green!50!black, opacity=0.6] ($(60:3.75 and 1.25) + (0,\yco)$) node[line width=0pt, above] (beta) {{\tiny$A_3$}};
\draw[color=green!50!black, opacity=0.6] ($(110:3.75 and 1.25) + (0,\yco)$) -- (0,\yco);
\draw[color=green!50!black, opacity=0.6] ($(110:3.75 and 1.25) + (0,\yco)$) node[line width=0pt, above] (beta) {{\tiny$A_1$}};
\draw[color=green!50!black, opacity=0.6] ($(125:3.75 and 1.25) + (0,\yco)$) -- (0,\yco);
\draw[color=green!50!black, opacity=0.6] ($(125:3.75 and 1.25) + (0,\yco)$) node[line width=0pt, above] (beta) {{\tiny$A_2$}};
\draw[color=green!50!black, opacity=0.6] ($(140:3.75 and 1.25) + (0,\yco)$) -- (0,\yco);
\draw[color=green!50!black, opacity=0.6] ($(140:3.75 and 1.25) + (0,\yco)$) node[line width=0pt, above] (beta) {{\tiny$A_3$}};
\draw[color=green!50!black, opacity=0.6] ($(300:3.75 and 1.25) + (0,\yco)$) -- (0,\yco);
\draw[color=black] ($(270:3.75 and 1.05) + (2.15,\yco+0.2)$) node[line width=0pt] (beta) {{{\scriptsize$\,\rotatebox{20}{\dots}$}}};
\draw[color=green!50!black, opacity=0.6] ($(320:3.75 and 1.25) + (0,\yco)$) -- (0,\yco);
\newcommand{\brei}{0.15}
%
\fill[opacity = 0.2] (0,0)
	to[out=60, in=270] (\brei,0.4)
	-- (\brei,2)
	-- (-\brei,2)
	-- (-\brei,0.4)
	to[out=270, in=120] (0,0);
\fill (\brei,-2)
	-- (\brei,-0.4)
	to[out=90, in=-60] (0,0)
	to[out=240, in=90] (-\brei,-0.4)
	-- (-\brei,-2);
\draw[line width=0.3pt, color=black] (\brei,-2) 		
							-- (\brei,-0.4)
							to[out=90, in=-60] (0,0)
							to[out=120, in=270] (-\brei,0.4)
							-- (-\brei,2);
\node[inner sep=1.5pt, fill, white] at (0,0) {};
\draw[line width=0.3pt, color=black] (-\brei,-2) 		
							-- (-\brei,-0.4)
							to[out=90, in=240] (0,0)
							to[out=60, in=270] (\brei,0.4)
							-- (\brei,2);
%
%
\fill[opacity = 0.2] (-\brei,2) -- (\brei,2) -- (\brei,4) -- (-\brei,4); 
\draw[line width=0.3pt, color=black] (-\brei,2) -- (-\brei,4);
\draw[line width=0.3pt, color=black] (\brei,2) -- (\brei,4);
%
\fill[color=white] (0,2) node[inner sep=3pt, fill, rounded corners=1pt, color=white] (P) {{\scriptsize$\;\varphi^{2}\;$}};; 
\draw[line width=1pt, color=black] (0,2) node[inner sep=3pt,draw, rounded corners=1pt] (P) {{\scriptsize$\;\varphi^{2}\;$}};
%
%
%
%
\renewcommand{\yco}{4}
\fill[color=green!50!black, opacity=0.05] (0,\yco) ellipse (3.75 and 1.25);
\draw[color=black] ($(175:3.05 and 1.25) + (0,\yco)$) node[line width=0pt] (beta) {{$*$}};
\draw[color=green!50!black, opacity=0.6] ($(30:3.75 and 1.25) + (0,\yco)$) -- (0,\yco);
\draw[color=green!50!black, opacity=0.6] ($(30:3.75 and 1.25) + (0,\yco)$) node[line width=0pt, above] (beta) {{\tiny$A_1$}};
\draw[color=green!50!black, opacity=0.6] ($(45:3.75 and 1.25) + (0,\yco)$) -- (0,\yco);
\draw[color=green!50!black, opacity=0.6] ($(45:3.75 and 1.25) + (0,\yco)$) node[line width=0pt, above] (beta) {{\tiny$A_2$}};
\draw[color=green!50!black, opacity=0.6] ($(60:3.75 and 1.25) + (0,\yco)$) -- (0,\yco);
\draw[color=green!50!black, opacity=0.6] ($(60:3.75 and 1.25) + (0,\yco)$) node[line width=0pt, above] (beta) {{\tiny$A_3$}};
\draw[color=green!50!black, opacity=0.6] ($(110:3.75 and 1.25) + (0,\yco)$) -- (0,\yco);
\draw[color=green!50!black, opacity=0.6] ($(110:3.75 and 1.25) + (0,\yco)$) node[line width=0pt, above] (beta) {{\tiny$A_1$}};
\draw[color=green!50!black, opacity=0.6] ($(125:3.75 and 1.25) + (0,\yco)$) -- (0,\yco);
\draw[color=green!50!black, opacity=0.6] ($(125:3.75 and 1.25) + (0,\yco)$) node[line width=0pt, above] (beta) {{\tiny$A_2$}};
\draw[color=green!50!black, opacity=0.6] ($(140:3.75 and 1.25) + (0,\yco)$) -- (0,\yco);
\draw[color=green!50!black, opacity=0.6] ($(140:3.75 and 1.25) + (0,\yco)$) node[line width=0pt, above] (beta) {{\tiny$A_3$}};
\draw[color=green!50!black, opacity=0.6] ($(300:3.75 and 1.25) + (0,\yco)$) -- (0,\yco);
\draw[color=black] ($(270:3.75 and 1.05) + (2.15,\yco+0.2)$) node[line width=0pt] (beta) {{{\scriptsize$\,\rotatebox{20}{\dots}$}}};
\draw[color=green!50!black, opacity=0.6] ($(320:3.75 and 1.25) + (0,\yco)$) -- (0,\yco);
%
%
\draw[color=black!20!red, very thick, ->, >=stealth] (0,\yco) -- (-0.75,\yco); 
\draw[color=black!20!red, very thick, ->, >=stealth] (0,-2) -- (0.75,-2); 
\draw[color=black!20!red, very thick, ->, >=stealth] (0,0.5) -- (0,1.25); 
\end{tikzpicture} 
\end{align*}
\caption{Relating different $*$-decorations of the same surface; example with minimal cyclic generator $k=3$, and $j=2$ copies of $A_1,A_2,A_3$ between the two $*$-decorations.} 
\label{fig:relatestars} 
\end{figure}

The surface defects are treated as in Construction~\ref{con:ZRTC} with two extra conditions: The triangulation~$t$ in the interior of $C_\Sigma$ 
has to restrict to~$\tau$ and~$\tilde\tau$ on the incoming and outgoing boundary, respectively, and $A$-ribbons from surface defects may be connected
to $M$-ribbons from line defects only before we started rotating $M$ (using the position of $*$ as coming from the incoming boundary) or after the insertion of $\varphi^j$ (using the position of $\tilde *$ as coming from the outgoing boundary).

We denote the bordism $C_\Sigma$ with this embedded ribbon graph as $C_{\Sigma,\tau,\tilde\tau,*,\tilde *}$. 
A straightforward but tedious calculation shows that the combination of $M$-ribbon-rotation and $\varphi^j$ commutes with attaching $A$-ribbons of a given surface defect to~$M$. This observation, together with triangulation independence as in the proof of Lemma~\ref{lem:Z-indep-of-interior-choices}, shows that the linear map (which is in general not an isomorphism)
\be
\label{eq:bigPsi}
\Psi_{\tau,\tilde\tau,*,\tilde *}^\Sigma 
:= 
\zrt \big( C_{\Sigma,\tau,\tilde\tau,*,\tilde *} \big)
\colon 
\; 
\zrt\big( \Sigma(\tau,*) \big) 
	\lra
\zrt\big( \Sigma(\tau,\tilde *) \big) 
\ee
is independent of all interior choices.

\begin{lemma}\label{lem:directed-system}
The maps $\Psi_{\tau, \widetilde\tau,*,\tilde *}^\Sigma$ in \eqref{eq:bigPsi} are compatible in the sense that
\be
\Psi_{\tau_1, \tau_3,*_1,*_3}^\Sigma = \Psi_{\tau_2, \tau_3,*_2,*_3}^\Sigma \circ \Psi_{\tau_1, \tau_2,*_1,*_2}^\Sigma
\ee 
for all triangulations~$\tau_i$ and $*$-decorations~$*_i$ for $i\in \{ 1,2,3 \}$. 
\end{lemma}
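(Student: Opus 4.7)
The approach is to recognise that the right-hand side is obtained by gluing the two cylinders and applying $\zrt$, and then to show that the glued cylinder is equivalent, as an object in $\Bordriben{3}(\Cat C)$ relative to its boundary, to the single cylinder $C_{\Sigma,\tau_1,\tau_3,*_1,*_3}$.

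More precisely, the plan is as follows. First I would use the (anomaly-tracked) functoriality of $\zrt$ to write
\[
\Psi_{\tau_2, \tau_3,*_2,*_3}^\Sigma \circ \Psi_{\tau_1, \tau_2,*_1,*_2}^\Sigma
\;=\;
\zrt\!\left( C_{\Sigma,\tau_2,\tau_3,*_2,*_3} \circ C_{\Sigma,\tau_1,\tau_2,*_1,*_2} \right),
\]
so that the whole question is reduced to comparing two $\Cat C$-marked bordisms $\Sigma(\tau_1,*_1) \to \Sigma(\tau_3,*_3)$ inside the cylinder $\Sigma \times [0,1]$: on the one hand the glued cylinder, on the other hand $C_{\Sigma,\tau_1,\tau_3,*_1,*_3}$.

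The glued cylinder contains, relative to the single cylinder, the following extra data in its interior: (i) the intermediate triangulation $\tau_2$ of the 1-strata at the gluing surface, (ii) the intermediate $*$-decoration $*_2$ on the 1-strata crossing the gluing surface, and (iii) the sequence of local manipulations that first rotate the $M$-ribbon from $*_1$ to $*_2$, insert the coupon $\varphi^{j_1}$, continue the rotation from $*_2$ to $*_3$, and insert $\varphi^{j_2}$. Items (i) and (ii) are interior to the cylinder and can therefore be eliminated by invoking Lemma~\ref{lem:Z-indep-of-interior-choices}: triangulations on surface defects can be changed in the interior, and the $*$-decoration on a 1-stratum which is entirely interior can be moved at will.

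After these interior simplifications, the comparison near each $M$-decorated 1-stratum becomes purely local and reduces to the following identity for ribbon graphs inside a solid cylinder $D^2 \times [0,1]$: rotating $M$ through the angle corresponding to $j_1 k$ surface defects, inserting a coupon $\varphi^{j_1}$, then rotating through $j_2 k$ further surface defects and inserting $\varphi^{j_2}$, equals rotating through $(j_1+j_2)k$ defects and inserting $\varphi^{j_1+j_2}$. Here I would use that the rotation and attachment of $A_i$-ribbons to $M$ commutes (as was already observed in the construction of $\Psi$), so the two operations separate, and that on coupons $\varphi^{j_1}\circ \varphi^{j_2} = \varphi^{j_1+j_2}$ by definition.

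The main obstacle I expect is the case where the total accumulated rotation from $*_1$ to $*_3$ wraps all the way around, i.e.\ when $j_1+j_2 \geq n/k$. Going ``the short way'' directly from $*_1$ to $*_3$ uses the exponent $j_1+j_2 - n/k$, and the difference with the composed path amounts to a full $2\pi$ rotation of the $M$-ribbon, which in the ribbon graph calculus produces an extra $\theta_M$ on the $M$-strand. This is precisely compensated by the defining relation $\varphi^{n/k} = \theta_M^{-1}$ of a cyclic multi-module (Definition~\ref{def:symmetricmodule}, in the form of Definition~\ref{def:cyclic-with-signs}). Invariance of $\zrt$ under isotopy of the embedded ribbon graph then finishes the identification of the two bordisms and hence of the associated linear maps.
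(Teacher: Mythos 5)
Your proposal is correct and follows essentially the same route as the paper: compute the composite by gluing the two cylinders, combine the two rotations into a single one by commuting rotation and coupon insertion past the $A$-ribbon attachments, combine the coupons to $\varphi^{j_1+j_2}$, and invoke $\varphi^{n/k}=\theta_M^{-1}$ to compensate the over-rotation when $j_1+j_2 \geq n/k$. One small caveat: citing Lemma~\ref{lem:Z-indep-of-interior-choices} to dispose of the intermediate $*$-decoration $*_2$ is not quite legitimate, since that lemma's $*$-independence is restricted to 1-strata \emph{entirely} contained in the interior (which are circles, allowing $\varphi_j^{-1}$ to be dragged around and cancelled), whereas the relevant 1-strata here reach both boundary components; fortunately your subsequent local analysis (commutation plus coupon combination plus the $\theta_M^{-1}$ trick) supplies the needed argument directly, so the lemma citation is a harmless but misleading redundancy.
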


\begin{proof}
The composition $\Psi_{\tau_2, \tau_3,*_2,*_3}^\Sigma \circ \Psi_{\tau_1, \tau_2,*_1,*_2}^\Sigma$ can be computed by gluing the corresponding cylinders:  $\zrt ( C_{\Sigma,\tau_2, \tau_3,*_2,*_3} \circ C_{\Sigma,\tau_1, \tau_2,*_1,*_2})$. For each 1-stratum, labelled by $M$ say, one now proceeds as follows: using the properties of $\Delta$-separable symmetric Frobenius algebras and their modules, remove all $A$-ribbons attached from adjacent surfaces between the start of the first rotation and the insertion of the second $\varphi$. 
We are left with the overall rotation from $*_1$ to $*_3$ and the isomorphism $\varphi^{j_{23}+j_{12}}$. If the overall rotation does not exceed $2\pi$, this is precisely the ribbon graph for $C_{\Sigma,\tau_1, \tau_3,*_1,*_3}$. 
If it does exceed a rotation by $2\pi$, then we employ the condition $\varphi^{n/k} = \theta_M^{-1}$ in Definition~\ref{def:symmetricmodule} to get $\varphi^{j_{23}+j_{12}} = \varphi^{j_{23}+j_{12}-n/k} \circ \theta_M^{-1}$. 
The extra inverse twist compensates the over-rotation of the ribbon, resulting again in the ribbon graph $C_{\Sigma,\tau_1, \tau_3,*_1,*_3}$.
\end{proof}

We are now in a position to apply a standard limit construction (see e.\,g.~\cite{LawrenceIntro} or Constructions~3.7--9 of \cite{CRS1}) 
to define our defect TQFT~$\zzc$. 
By definition, for any $\Sigma \in \Borddefen{3}(\DC)$ we set $\zzc(\Sigma)$ to be the vector space which is the limit of the inverse system~\eqref{eq:bigPsi}, and the action of~$\zzc$ on morphisms is defined as in \cite[Constr.\,3.9(v)]{CRS1}. 

More explicitly, we have that up to isomorphism, $\zzc(\Sigma)$ is given by the image of an idempotent, namely the map $\Psi_{\tau,\tau,*,*}^\Sigma$ for any choice of $*$-decoration~$*$ and any triangulation~$\tau$ of the 1-strata of~$\Sigma$: 
\be
\label{eq:ImageOfPis}
\zzc(\Sigma) \cong \text{Im} \big( \Psi_{\tau,\tau,*,*}^\Sigma \big) \, . 
\ee
Up to these isomorphisms, the linear map which~$\zzc$ assigns to a bordism $N\colon \Sigma_1 \to \Sigma_2$ in $\Borddc$ 
is given by the linear map that the Reshetikhin-Turaev construction~\eqref{eq:ZRTCstar} induces on the vector spaces $\text{Im}(\Psi_{\tau_i,\tau_i,*_i,*_i}^{\Sigma_i})$ for $i\in\{1,2\}$. 
Thus in summary, we have: 

\begin{theorem}
\label{thm:mainresult}
The limit of the inverse system~\eqref{eq:bigPsi} gives a defect TQFT $\zzc \colon \Borddefen{3}(\DC) \to \Vectk$. 
\end{theorem}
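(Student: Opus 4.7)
The plan is to verify the three requirements for $\zzc$ to be a symmetric monoidal functor: well-definedness on objects and morphisms, respect for composition, and compatibility with the symmetric monoidal structure. For objects, Lemma~\ref{lem:directed-system} establishes that the $\Psi^\Sigma_{\tau_1,\tau_2,*_1,*_2}$ form an inverse system indexed by pairs $(\tau,*)$, so the limit $\zzc(\Sigma)$ is well-defined. The cocycle relation in Lemma~\ref{lem:directed-system} implies that $\Psi^\Sigma_{\tau,\tau,*,*}$ is idempotent, and up to canonical isomorphism $\zzc(\Sigma) \cong \operatorname{Im}(\Psi^\Sigma_{\tau,\tau,*,*})$ for any choice of $(\tau,*)$, as stated in~\eqref{eq:ImageOfPis}.

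For a morphism $[N]\colon \Sigma_1 \to \Sigma_2$ in $\Borddc$, I would pick interior triangulations $t$ of the 2-strata and a $*$-decoration, inducing boundary data $(\tau_i,*_i)$ and giving the map $\zrt(N(t,*))$ from~\eqref{eq:ZRTCstar}. The key step is to show that this assignment defines a morphism of inverse systems, i.e.\ for any two boundary choices $(\tau_i,*_i)$ and $(\widetilde\tau_i,\widetilde*_i)$ with extensions $(t,*)$ and $(\widetilde t,\widetilde*)$, one has
\begin{equation*}
\Psi^{\Sigma_2}_{\tau_2,\widetilde\tau_2,*_2,\widetilde*_2} \circ \zrt\bigl(N(t,*)\bigr) \;=\; \zrt\bigl(N(\widetilde t,\widetilde *)\bigr) \circ \Psi^{\Sigma_1}_{\tau_1,\widetilde\tau_1,*_1,\widetilde*_1} \, .
\end{equation*}
This is established by gluing the cylinders $C_{\Sigma_i,\tau_i,\widetilde\tau_i,*_i,\widetilde*_i}$ to the corresponding boundaries of $N$ and invoking Lemma~\ref{lem:Z-indep-of-interior-choices} together with the diagrammatic manipulations already employed in the proof of Lemma~\ref{lem:directed-system}: absorption of $A$-ribbons along $M$-ribbons via $\Delta$-separability and module compatibility, plus the use of $\varphi^{n/k} = \theta_M^{-1}$ to absorb any over-rotation. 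Passing to the limits then defines $\zzc([N])$; independence of the interior choice of $(t,*)$ is exactly Lemma~\ref{lem:Z-indep-of-interior-choices}.

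Functoriality reduces to a matching argument: for $[N_1]\colon\Sigma_1\to\Sigma_2$ and $[N_2]\colon\Sigma_2\to\Sigma_3$, I would choose interior data on $N_1$ and $N_2$ inducing the same $(\tau_2,*_2)$ on $\Sigma_2$, so that they glue to interior data on $N_2\circ N_1$. Then $\zrt\bigl((N_2\circ N_1)(t,*)\bigr) = \zrt\bigl(N_2(t^{(2)},*^{(2)})\bigr)\circ \zrt\bigl(N_1(t^{(1)},*^{(1)})\bigr)$ by functoriality of $\zrt$ on $\Bordriben{3}(\Cat{C})$; the gluing anomaly is tracked by the integer labels in the extension~\eqref{eq:bord-def-ext}, which is designed to mirror~\eqref{eq:bord-rib-ext}. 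Identities map to identities because the cylinder over $\Sigma$ with $(\tau,*)$-data on both boundaries equals $\Psi^\Sigma_{\tau,\tau,*,*}$, which acts as the identity on $\zzc(\Sigma) \cong \operatorname{Im}(\Psi^\Sigma_{\tau,\tau,*,*})$. The symmetric monoidal structure is inherited directly from $\zrt$, since triangulations and $*$-decorations distribute over disjoint unions, and the symmetry isomorphism descends from that on $\Vectk$.

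The main obstacle is the intertwining identity above: one must check that the ribbon-rotation plus $\varphi^{j}$ prescription that defines the cylinders $C_{\Sigma,\tau,\widetilde\tau,*,\widetilde*}$ commutes with the attachment of $A$-ribbons arising from any 2-stratum of $N$ meeting the corresponding 1-stratum at the boundary. This decomposes into local checks at each line defect, and the essential ingredients are precisely (a) that $\varphi$ is a map of multi-modules, which makes $\varphi^j$ compatible with the module actions $\rho_i$, and (b) the relation $\varphi^{n/k}=\theta_M^{-1}$ that turns the ribbon's framing twist accumulated during a full rotation into the appropriate power of $\varphi$. Both are exactly the data encoded in Definition~\ref{def:symmetricmodule}, and together with the triangulation-invariance from Lemma~\ref{lem:Z-indep-of-interior-choices} they yield the required coherence.
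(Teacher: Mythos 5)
Your proposal is correct and takes essentially the same approach as the paper: set up the inverse system via Lemma~\ref{lem:directed-system}, verify that the maps $\zrt(N(t,*))$ of~\eqref{eq:ZRTCstar} form a morphism of inverse systems (the intertwining identity you isolate, proved by gluing the cylinders $C_{\Sigma,\tau,\widetilde\tau,*,\widetilde*}$ to the boundaries of $N$ and invoking Lemma~\ref{lem:Z-indep-of-interior-choices}), and then pass to limits, identifying $\zzc(\Sigma)$ with $\operatorname{Im}(\Psi^\Sigma_{\tau,\tau,*,*})$ as in~\eqref{eq:ImageOfPis}. The paper compresses all of this into a citation of the standard limit construction from \cite[Constr.\,3.7--9]{CRS1}, so your write-up is simply a more explicit rendering of the same argument.
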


\begin{remark}
\begin{enumerate}
\item
The functor $\zzc$ is indeed an extension of the defect TQFT considered in \cite{CMS}: The monoidal unit $\one \in \Cat{C}$ is a $\Delta$-separable Frobenius algebra, and every object $X \in \Cat{C}$ is canonically a cyclic multi-module in the sense that $((\one,-), \ldots, (\one,-), (\one,+), X) \in D^{\Cat{C}}_1$ for any number $n\in \Z_+$ of copies of~$\one$. 
Restricting to such defect labels yields a reformulation of the defect data considered in \cite{CMS}, and Construction~\ref{con:ZRTC} restricts to the corresponding defect TQFT. 
\item 
Let us briefly describe how the Reshetikhin-Turaev construction $\zrt$ recalled in Section~\ref{sec:MTCRT} can be recovered from~$\zzc$. 
Note first though that there are two main differences between the category $\Bordriben{3}(\Cat{C})$ and the subcategory of $\Borddefen{3}(\DC)$ without 2-strata: in the former line defects (1-strata) are framed, while in $\Borddefen{3}(\DC)$ they are not, and in $\Bordriben{3}(\Cat{C})$ the 1-strata can meet in coupons (which one could model as 0-strata), which we excluded in $\Borddefen{3}(\DC)$. 
Both ribbons and coupons can however be modelled in the defect bordism category: 

\textsl{\underline{Ribbons}}: 
In $\Borddefen{3}(\DC)$ we can model a ribbon labelled by $X\in\Cat{C}$ as a $\one$-labelled ribbon (viewed as a 2-stratum) whose left and right boundaries are 1-strata labelled by~$X$ and~$\one$ (both viewed as $\one$-modules), respectively: 
\be
\label{eq:ribbons}
\tikzzbox{\begin{tikzpicture}[very thick,scale=1.2,color=blue!50!black, 	baseline=1cm]
	%
	\draw[
	color=blue!50!black, 
	line width=3pt, 
	>=stealth,
	decoration={markings, mark=at position 0.5 with {\arrow{>}},
	}, postaction={decorate}
	] 
	(0,0) -- (0,2);
	%
	\draw[line width=1, color=blue!50!black] (-0.2, 1) node[line width=0pt] (beta) {{\scriptsize$X$}};
	\end{tikzpicture}}
\;\;\;
\lmt
\;  
\tikzzbox{\begin{tikzpicture}[very thick,scale=1.2,color=blue!50!black, baseline=1cm]
	%
	\fill [green!50!black,opacity=0.15] (0,0) -- (0,2) -- (0.3,2) -- (0.3,0) -- (0,0);
	%
	\draw[
	color=blue!50!black, 
	>=stealth,
	decoration={markings, mark=at position 0.5 with {\arrow{>}},
	}, postaction={decorate}
	] 
	(0,0) -- (0,2);
	\draw[
	color=green!50!black, 
	thick, 
	>=stealth,
	decoration={markings, mark=at position 0.5 with {\arrow{>}},
	}, postaction={decorate}
	] 
	(0.3,0) -- (0.3,2);
	%
	\draw[line width=1, color=blue!50!black] (-0.2, 1) node[line width=0pt] (beta) {{\scriptsize$X$}};
	\draw[line width=1, color=green!50!black] (0.5, 1) node[line width=0pt] (beta) {{\scriptsize$\one$}};	
	\draw[line width=1, color=green!50!black] (0.175, 1.5) node[line width=0pt] (beta) {{\scriptsize$\one$}};	
	\end{tikzpicture}}
\ee
\textsl{\underline{Coupons}}: 
As mentioned in Remark~\ref{rem:intro-whatelse}(iv) and explained in detail in \cite[Sect.\,2.4]{CRS1}, the set of defect data~$\DC$ can canonically be completed to a set of defect data $(\DC)^\bullet$ to include all compatible labels for 0-strata, and $\zzc$ lifts to its ``$D_0$-completion'' 
\be 
(\zzc)^\bullet \colon \Borddefen{3}((\DC)^\bullet) \lra \Vectk \, . 
\ee 
We claim that any $\phi$-labelled coupon in $\Bordriben{3}(\Cat{C})$ with ingoing ribbons~$X_i$ and outgoing ribbons~$Y_j$ can be modelled as a neighbourhood of a corresponding $\phi$-labelled 0-stratum in $\Borddefen{3}((\DC)^\bullet)$: 
the $X_i$- and $Y_j$-labelled 1-strata corresponding to the ribbons meet in the 0-stratum, and the associated $\one$-labelled 1- and 2-strata form a half-disc in the plane of the coupon: 
\be
\label{eq:coupons}
\begin{tikzpicture}[line width=3pt, scale=0.75, color=blue!50!black, baseline]
%
\draw[color=blue!40!green, >=stealth, decoration={markings, mark=at position 0.8 with {\arrow{>}},}, postaction={decorate}] 
(-0.25,0) -- (-0.25,0.5) to[out=90, in=270] (-1,2) -- (-1,3); 
\draw[color=blue!50!black, >=stealth, decoration={markings, mark=at position 0.78 with {\arrow{>}},}, postaction={decorate}] 
(0,0) -- (0,0.5) to[out=90, in=270] (-0.25,2) -- (-0.25,3); 
\draw[color=red!50!green, >=stealth, decoration={markings, mark=at position 0.8 with {\arrow{>}},}, postaction={decorate}] 
(0.25,0) -- (0.25,0.5) to[out=90, in=270] (1,2) -- (1,3); 
%
%
\draw[color=blue!50!red, >=stealth, decoration={markings, mark=at position 0.8 with {\arrow{<}},}, postaction={decorate}] 
(-0.25,0) -- (-0.25,-0.5) to[out=270, in=90] (-1,-2) -- (-1,-3); 
\draw[color=blue!30!red, >=stealth, decoration={markings, mark=at position 0.78 with {\arrow{<}},}, postaction={decorate}] 
(0,0) -- (0,-0.5) to[out=270, in=90] (-0.25,-2) -- (-0.25,-3); 
\draw[color=red!80!green, >=stealth, decoration={markings, mark=at position 0.8 with {\arrow{<}},}, postaction={decorate}] 
(0.25,0) -- (0.25,-0.5) to[out=270, in=90] (1,-2) -- (1,-3); 
%
%
\fill[color=white] (0,0) node[inner sep=4pt,draw, rounded corners=1pt, fill, color=white] (R2) {{\scriptsize$\rho_2$}};
\draw[line width=1pt, color=black] (0,0) node[inner sep=4pt,draw, rounded corners=1pt] (R) {{\scriptsize$\;\phi\;$}};
%
%
\fill[color=black] (-1.3,1.85) node[above] (X) {{\scriptsize$Y_1$}};
\fill[color=black] (-0.55,1.85) node[above] (X) {{\scriptsize$Y_2$}};
\fill[color=black] (0.4,1.85) node[above] (X) {{\scriptsize$\dots$}};
\fill[color=black] (1.4,1.85) node[above] (X) {{\scriptsize$Y_m$}};
\fill[color=black] (-1.3,-2.9) node[above] (X) {{\scriptsize$X_1$}};
\fill[color=black] (-0.55,-2.9) node[above] (X) {{\scriptsize$X_2$}};
\fill[color=black] (0.4,-2.9) node[above] (X) {{\scriptsize$\dots$}};
\fill[color=black] (1.45,-2.9) node[above] (X) {{\scriptsize$X_n$}};
\end{tikzpicture}
\lmt \;\;\;
\begin{tikzpicture}[very thick, scale=0.75, color=blue!50!black, baseline]
%
\fill [green!50!black,opacity=0.15] 
(0,-3) -- (0.3,-3) -- ([shift=(-79:1.5)]0,0) arc (-79:-71.5:1.5)
-- (1,-3) -- (1.3,-3) -- ([shift=(-63:1.5)]0,0) arc (-63:-45:1.5)
-- (3,-3) -- (3.3,-3) -- ([shift=(-36.5:1.5)]0,0) arc (-36.5:36.5:1.5) -- (3.3,3)
-- (3,3) -- ([shift=(45:1.5)]0,0) arc (45:63:1.5) -- (1.3,3)
-- (1,3) -- ([shift=(71.5:1.5)]0,0) arc (71.5:79:1.5) -- (0.3,3)
-- (0,3);
%
\draw[color=blue!40!green, >=stealth, decoration={markings, mark=at position 0.8 with {\arrow{>}},}, postaction={decorate}]
(0,0) -- (0,3);
\draw[color=blue!50!black, >=stealth, decoration={markings, mark=at position 0.8 with {\arrow{>}},}, postaction={decorate}]
(0,0) -- (1,3);
\draw[color=red!50!green, >=stealth, decoration={markings, mark=at position 0.8 with {\arrow{>}},}, postaction={decorate}]
(0,0) -- (3,3);
\draw[color=blue!50!red, >=stealth, decoration={markings, mark=at position 0.8 with {\arrow{<}},}, postaction={decorate}]
(0,0) -- (0,-3);
\draw[color=blue!30!red, >=stealth, decoration={markings, mark=at position 0.8 with {\arrow{<}},}, postaction={decorate}]
(0,0) -- (1,-3);
\draw[color=red!80!green, >=stealth, decoration={markings, mark=at position 0.8 with {\arrow{<}},}, postaction={decorate}]
(0,0) -- (3,-3);
\draw[color=green!50!black, thick, >=stealth, decoration={markings, mark=at position 0.5 with {\arrow{>}},}, postaction={decorate}] 
([shift=(71.5:1.5)]0,0) arc (71.5:79:1.5) -- (0.3,3);
\draw[color=green!50!black, thick, >=stealth, decoration={markings, mark=at position 0.5 with {\arrow{>}},}, postaction={decorate}] 
([shift=(45:1.5)]0,0) arc (45:63:1.5) -- (1.3,3);
\draw[color=green!50!black, thick, >=stealth, decoration={markings, mark=at position 0.5 with {\arrow{>}},}, postaction={decorate}] 
(3.3,-3) -- ([shift=(-36.5:1.5)]0,0) arc (-36.5:36.5:1.5) -- (3.3,3);
\draw[color=green!50!black, thick, >=stealth, decoration={markings, mark=at position 0.5 with {\arrow{>}},}, postaction={decorate}] 
(1.3,-3) -- ([shift=(-63:1.5)]0,0) arc (-63:-45:1.5);
\draw[color=green!50!black, thick, >=stealth, decoration={markings, mark=at position 0.5 with {\arrow{>}},}, postaction={decorate}] 
(0.3,-3) -- ([shift=(-79:1.5)]0,0) arc (-79:-71.5:1.5);
%
%
\fill[color=black] (0,0) circle (4pt) node[right] (meet2) {};
\fill[color=green!50!black] (71.5:1.5) circle (2.5pt) node[right] (meet2) {};
\fill[color=green!50!black] (45:1.5) circle (2.5pt) node[right] (meet2) {};
\fill[color=green!50!black] (-45:1.5) circle (2.5pt) node[right] (meet2) {};
\fill[color=green!50!black] (-71.5:1.5) circle (2.5pt) node[right] (meet2) {};
%
%
\fill[color=black] (-0.22,2.15) node[above] (X) {{\scriptsize$Y_1$}};
\fill[color=black] (0.55,2.15) node[above] (X) {{\scriptsize$Y_2$}};
\fill[color=black] (2,2.15) node[above] (X) {{\scriptsize$Y_m$}};
\fill[color=black] (-0.25,-2.15) node[below] (X) {{\scriptsize$X_1$}};
\fill[color=black] (0.55,-2.15) node[below] (X) {{\scriptsize$X_2$}};
\fill[color=black] (2.1,-2.15) node[below] (X) {{\scriptsize$X_n$}};
\fill[color=green!50!black] (0.2,1.8) node (X) {{\tiny$\one$}};
\fill[color=green!50!black] (0.74,1.8) node (X) {{\tiny$\one$}};
\fill[color=green!50!black] (1.35,0) node (X) {{\tiny$\one$}};
\fill[color=green!50!black] (0.2,-2.1) node (X) {{\tiny$\one$}};
\fill[color=green!50!black] (0.82,-2.04) node (X) {{\tiny$\one$}};
\fill[color=black] (-0.35,0) node (X) {{\scriptsize$\phi$}};
\end{tikzpicture} 
\ee 
By the construction of \cite[Sect.\,2.4]{CRS1}, the 0-strata on the right-hand side must be labelled by elements of the vector spaces computed from the action of~$\zzc$ on small stratified spheres around the 0-strata. 
By Lemma~\ref{lem:value-sMN} below these vector spaces are precisely the corresponding Hom spaces in~$\Cat{C}$.\footnote{The general construction of~$\mathds{D}^\bullet$ in \cite{CRS1} involves a symmetry constraint, but since the spheres around the 0-strata in~\eqref{eq:coupons} do not have such symmetries (as there are no $\one$-labelled 1-strata to the left of the $X_1$- and $Y_1$-labelled lines, for example), the label sets are indeed the Hom spaces of Lemma~\ref{lem:value-sMN}.} 
Hence the 0-stratum on which the $X_i$- and $Y_j$-labelled 1-strata are incident can indeed by labelled by $\phi \in \Hom_{\Cat{C}}(\bigotimes_i X_i, \bigotimes_j Y_j)$, and the remaining 0-strata in~\eqref{eq:coupons} are (invisibly) labelled by the corresponding unitors in~$\Cat{C}$. 

The assignments~\eqref{eq:ribbons} and~\eqref{eq:coupons} lift to a faithful functor $F\colon \Bordriben{3}(\Cat{C}) \to \Borddefen{3}((\DC)^\bullet)$ such that $\zrt \cong (\zzc)^\bullet \circ F$. 
\item 
By Lemma~\ref{lem:Z-indep-of-interior-choices}, our construction is independent of the choice of $*$-decoration for 1-strata which are entirely contained in the interior of bordisms. 
This also means that we have independence of the choice of $C_n$-equivariant structure on the modules which decorate such 1-strata in the interior. 

Furthermore, up to isomorphism also the value of~$\zzc$ on objects is independent of the choice of $C_n$-equivariant structures~$\varphi$ in the labels $(M,\varphi)$ for 1-strata. This follows directly from the fact that the isomorphisms~$\varphi$ do not appear in the idempotents $\Psi_{\tau,\tau,*,*}^\Sigma$ in~\eqref{eq:ImageOfPis}. 

Nonetheless, in general the action of~$\zzc$ on morphisms does depend on the choice of equivariant structure on modules. 
For example, consider the object $\Sigma \in \Borddefen{3}(\DC)$ which is a 2-sphere with two 0-strata at the poles decorated by $C_n$-equivariant $A^{\otimes n}$-modules $(M,\varphi)$ and $(M',\varphi')$, 
and $n\geqslant 2$ 1-strata, all decorated with the same algebra~$A$ and running along the longitudes $i\cdot 360\degree/n$ from the south pole to the north pole. Now let~$N$ be the cylinder over~$\Sigma$, where the ingoing boundary is parametrised by the identity map while the outgoing boundary by a non-trivial rotation of~$\Sigma$. This means that the $M$- and $M'$-ribbon connecting the poles are twisted by an angle between~0 and $2\pi$, and non-trivial powers of the structure maps $\varphi$, $\varphi'$  
have to be inserted as in Figure~\ref{fig:relatestars}. Hence $\zzc(N)$ depends on the choice of these maps. 
\item 
In the description of 2-dimensional conformal field theory on unoriented surfaces, so-called ``Jandl algebras'' were introduced in \cite{tft2}. 
These are algebras in $\Cat{C}$ together with a \textsl{reversion}, i.\,e.\ an algebra isomorphism $A \to A^{\mathrm{op}}$ which squares to the twist on $A$. 
In the present context, a reversion allows one to change the 2-orientation of a surface defect without affecting the value of $\zzc$; the beginning of \cite[Sect.\,2]{tft2} illustrates the mechanism. This could be used to develop a theory of unoriented surface defects.
\end{enumerate}
\label{rem:relationToRT}
\end{remark}

\medskip

\noindent 
\textbf{\textsl{State spaces associated to decorated spheres in~$\boldsymbol{\zzc}$}: }
As illustration we apply the construction    of $\zzc$ to compute the state spaces associated to spheres with decorations by multi-modules.
Let ${}_{A_1\dots A_n}M$ and ${}_{A_1\dots A_n}N$ be cyclic multi-modules over 
$\Delta$-separable symmetric Frobenius algebras $A_{1}, \ldots A_{n}$. 
Then there is an associated decorated sphere denoted $S_{M,N}$ with two 0-strata on the north and south pole decorated by $(N,-)$, $(M,+)$, respectively, and $n$ $1$-strata connecting the two $0$-strata and decorated $A_{1}, \ldots, A_{n}$. 

Recall from Definition \ref{definition:mapmulti} that $\Hom_{A_{1} \otimes \ldots \otimes A_{n}}(M,N)$ 
is the vector space of maps of multi-modules from $M$ to $N$ (which are not required to respect the cyclic structures of~$M$ and~$N$).
\begin{lemma}
\label{lem:value-sMN}

We have 
	\begin{equation}
	\label{eq:state-space-sphere}
	\zzc(S_{M,N}) \cong \Hom_{A_{1} \otimes \ldots \otimes A_{n}}(M,N) \, .
	\end{equation}
\end{lemma}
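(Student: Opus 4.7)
The plan is to present $\zzc(S_{M,N})$ as the image of the idempotent $\Psi := \Psi^{S_{M,N}}_{\tau,\tau,*,*}$ from~\eqref{eq:ImageOfPis}, for a convenient triangulation $\tau$ with no interior vertices on any of the $n$ meridians and a $*$-decoration that picks out a fixed 2-stratum adjacent to each pole.

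With this choice, the $\Cat{C}$-marked surface $S_{M,N}(\tau,*)$ carries, near each pole, one marked point from the $*$-decoration (labelled $(M,+)$ at the south pole and $(N,-)$ at the north pole) together with $n$ further marked points coming from the meridian endpoints and labelled $(A_i,\eps_i)$. By the description of Reshetikhin-Turaev state spaces on spheres with marked points (Section~\ref{sec:MTCRT}), $\zrt(S_{M,N}(\tau,*))$ is isomorphic to a Hom space in $\Cat{C}$ involving $M$, $N$ and two copies of each $A_i$; after applying dualities it can be written in the form $\Hom_{\Cat{C}}(M\otimes A_1^{\pm}\otimes\cdots\otimes A_n^{\pm},\, A_1^{\pm}\otimes\cdots\otimes A_n^{\pm}\otimes N)$, so in particular it contains $\Hom_{\Cat{C}}(M,N)$ as a natural subspace.

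Next I would describe $\Psi$ explicitly. The cylinder bordism $C_{S_{M,N},\tau,\tau,*,*}$ contains an $M$-ribbon and an $N$-ribbon connecting the respective poles of the two boundary copies, together with $n$ rectangular $A_i$-labelled 2-strata that I triangulate minimally. Because both boundaries carry the same $*$-decoration, no $\varphi$-insertions from the cyclic equivariant structure of $M$ or $N$ appear, consistent with Remark~\ref{rem:relationToRT}(iii). Using the $\Delta$-separability and symmetric Frobenius properties of each $A_i$ to contract the interior ribbon network, the evaluation $\Psi = \zrt(C_{S_{M,N},\tau,\tau,*,*})$ reduces to the standard averaging projector: for each $i$, the Casimir element $\Delta_i\circ\eta_i\in A_i\otimes A_i$ is inserted, with one leg acting on $M$ and the other on $N$ via the module structures $\rho_i^M$ and $\rho_i^N$. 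The half-twist~\eqref{eq:halftwistAi} arising when $\eps_i=-$ is absorbed into the $A_i^{\eps_i}$-module structure via Definition~\ref{def:twistedmodule} and~\eqref{eq:alternative-rhotw}.

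Finally, the $n$ commuting averaging projectors assemble, by iterated application of Lemma~\ref{lem:A1A2}, into the averaging projector for the tensor-product algebra $A_1\otimes\cdots\otimes A_n$ acting on $\Hom_{\Cat{C}}(M,N)$. A standard calculation using $\mu_i\circ\Delta_i=\id_{A_i}$ and the Frobenius/symmetry properties then shows that the image of this projector is precisely the subspace of $A_1\otimes\cdots\otimes A_n$-module maps, yielding the identification~\eqref{eq:state-space-sphere}. The main technical obstacle lies in the graphical simplification: one must carefully verify that the ribbon graph produced by Construction~\ref{con:ZRTC} for this cylinder really reduces to the advertised averaging projector after using the $\Delta$-separability and symmetric Frobenius relations, and that the orientation signs $\eps_i$ together with the half-twists~\eqref{eq:halftwistAi} conspire to give exactly the $A_i^{\eps_i}$-module invariance on both sides.
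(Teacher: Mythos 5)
Your proposal and the paper's proof start from the same place---presenting $\zzc(S_{M,N})$ as the image of the cylinder idempotent via~\eqref{eq:ImageOfPis} and reducing to a single algebra $A$ by observing that the $n$ projectors commute---but they diverge in how they compute that image. You aim to identify the idempotent \emph{concretely} as the averaging/Casimir projector built from $\Delta\circ\eta$ and the two module actions, and then invoke the standard fact that the image of this projector on $\Hom_{\Cat C}(M,N)$ is the space of module maps. The paper instead sidesteps any explicit ribbon-graph simplification by establishing an \emph{abstract uniqueness property} of the idempotent: any endomorphism of $\Hom_{\Cat C}(A^{\otimes m}\otimes M, A^{\otimes m}\otimes N)$ presented by a string diagram built from the Frobenius structure and the module actions, whose underlying graph stays connected after deleting the $f$-coupon, necessarily equals $P$. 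With this characterization in hand, one simply writes down candidate maps $\varphi(g)=P(1_{A^{\otimes m}}\otimes g)$ and $\rho(f)=(\eps^{\otimes m}\otimes 1_N)\circ f\circ(\eta^{\otimes m}\otimes 1_M)$ and checks they are mutually inverse---no explicit evaluation of the cylinder ribbon graph is ever required. This is precisely the ``main technical obstacle'' you flag at the end: your route does require matching the graph produced by Construction~\ref{con:ZRTC} against the advertised averaging formula (including the precise placement of braidings and the half-twists~\eqref{eq:halftwistAi} for the $\eps_i=-$ cases), while the paper's connectedness trick lets one avoid committing to any particular triangulation or to a closed-form expression for $P$. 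Both routes are sound; yours is more concrete and arguably more enlightening about \emph{what} $P$ is, but the paper's is shorter and more robust. If you do carry out your version, be careful that the Casimir-insertion description is stated loosely: one leg should act via $\rho^N$ and the other via a dualized/transposed version of $\rho^M$ (using the symmetric Frobenius form), not via two plain left actions---otherwise $P(f)$ will not actually be an $A$-module map.
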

\begin{proof}
First we pick  a $\ast$-decoration $\ast$ and  a triangulation $\tau$ of the 1-strata on $S_{M,N}$  and  extend the triangulation  to the  bordism $C_{S_{M,N},\tau,\tau,\ast,\ast}$ corresponding to the cylinder $C_{S_{M,N}}=S_{M,N} \times [0,1]$. 
Then the map  
	\begin{equation}
	P := \zrt(C_{S_{M,N},\tau,\tau,\ast,\ast}) \colon \zrt(S_{M,N}(\tau,\ast)) \lra  \zrt(S_{M,N}(\tau,\ast))
	\end{equation}
	is an idempotent, and by \eqref{eq:ImageOfPis} we have 
	\begin{equation}
	\label{eq:def-state-space}
	\zzc(S_{M,N}) \cong  \text{Im}(P) \, .
	\end{equation}
To compute the linear map~$P$ we evaluate a string diagram in $\Cat{C}$ corresponding to the dual of the triangulation of $C_{S_{M,N}}$. 
One finds that the diagram is 
a horizontal product of diagrams corresponding to the 2-strata, so that we are dealing with a product of commuting projectors. 
Hence we can and will assume without loss of generality that $n=1$, i.\,e.\ there is a single 2-stratum labelled~$A$ while ${}_{A}M$ and ${}_{A}N$ are $A$-modules, and the $\ast$-decoration is unique. 
Assuming that the triangulation~$\tau$ of $S_{M,N}$ has $m$ 1-simplices, we have that by definition
	\begin{equation}
	\zrt(S_{M,N},\tau)= \Hom_{\Cat{C}}(A^{\otimes m} \otimes M, A^{\otimes m} \otimes N) \, .
	\end{equation}

For $f \in \Hom_{\Cat{C}}(A^{\otimes m} \otimes M, A^{\otimes m} \otimes N)$ the map  $P(f)$ is by definition the evaluation of a string diagram $\Gamma(f)$ in $\Cat{C}$ that corresponds to the dual of the triangulation of $C_{S_{M,N}}$ with one coupon labelled by~$f$. 
By the properties of the triangulation, the diagram~$\Gamma'$ obtained from $\Gamma(f)$ by removing the coupon is connected (though~$\Gamma'$ is not a string diagram in~$\Cat{C}$). 
It follows from the  properties of $\Delta$-separable symmetric Frobenius algebras that this property determines the evaluation of $\Gamma(f)$ uniquely:
\begin{itemize}
\item[($\#$)]
Let~$P'$ be an endomorphism of $\Hom_{\Cat{C}}(A^{\otimes m} \otimes M, A^{\otimes m} \otimes N)$ that is defined on a morphism~$f$ by the evaluation of a string diagram which is built only from one $f$-labelled coupon, the structure maps of the Frobenius algebra~$A$ and the actions of~$A$ on~$M$ and~$N$, such that the diagram remains connected after removing the coupon. 
Then $P' = P$. 
\end{itemize}
Now we can define isomorphisms 
	\begin{equation}
	\rho \colon \text{Im}(P) \rightleftarrows \Hom_{A}(M,N) :\! \varphi \, ,
	\end{equation}
by $\varphi(g)= P(1_{A^{\otimes m}}\otimes g)$ and $\rho(f)=(\varepsilon^{\otimes m} \otimes 1_{N}) \circ f \circ (\eta^{\otimes m} \otimes 1_{M})$.
Using ($\#$) and the fact that~$g$ and $\rho(f)$ are module maps it is straightforward to see that $\rho, \varphi$ are well-defined and mutually inverse. 
\end{proof}

\medskip

\noindent
\textbf{\textsl{Topological invariants}: }
Our enhancement of $\zrt$ to a defect TQFT provides new invariants of topological objects. 
In particular, given a pair of a closed 3-manifold $N$ and an embedding $\iota \colon \Sigma \rightarrow N$ of an oriented surface $\Sigma$ in $N$, every $\Delta$-separable symmetric Frobenius algebra $A$ in $\Cat{C}$ yields a number 
 \begin{equation}
   \zzc \big((N,\iota);A\big) \, ,
 \end{equation}
where, as the notation suggests, we label the surface $\Sigma$ in $N$ with~$A$ and then evaluate using $\zzc$. 
By definition of~$\zzc$, the result is invariant under isotopies of~$\iota$. 
Non-isotopic embeddings may indeed lead to different invariants, but one needs surfaces of non-zero genus:

\medskip
\noindent
\textbf{\textsl{Embedded spheres all look the same to~$\boldsymbol{\zzc}$}: }
If $\Sigma$ a 2-sphere, replacing $\Sigma$ in $N$ by a ribbon network, one can isotopically change the network by sliding it along the surface of the sphere to a small neighbourhood of one point and evaluate it to $\dim(A)$ (using that~$A$ is a $\Delta$-separable symmetric Frobenius algebra) 
without changing $\zzc((N,\iota);A)$. 
Hence for all embeddings~$\iota$,
\be
	\zzc \big((N,\iota);A \big) = \dim(A) \cdot \zrt(N) \, .
\ee

\medskip
\noindent
\textbf{\textsl{Embedded tori can be distinguished by~$\boldsymbol{\zzc}$}: }
If $\Sigma$ is a 2-torus there are situations where non-isotopic embeddings can be distinguished. 
For example, let $N = S^1 \times S^1 \times S^1$ be the 3-torus. Write $\Sigma = S^1 \times S^1$ and consider the embeddings:
\begin{itemize}
\item[$\iota_0^\pm$: ] Both $S^1$ of $\Sigma$ are contractible in $N$, and $\pm$ refers to the two possible orientations of the embedded surface. 
\item[$\iota_1^\pm$: ] One $S^1$ of $\Sigma$ is contractible, the other one winds along an $S^1$ of $N$. Again,~$\pm$ refers to the two orientations.
\item[$\iota_2$: ] Both $S^1$ of $\Sigma$ wind along separate $S^1$ in $N$ and are non-contractible. In this case, the two orientations are related by a diffeomorphism of $N$.
\end{itemize}
We will need the left and a right centre $C_{l/r}(A)$ of $A$, which are in particular commutative subalgebras of~$A$, see \cite[Sect.\,2.4]{Frohlich:2003hm} for details. 
By expressing $\zrt(N)$ as a trace of $\zrt(S^1 \times S^1 \times [0,1])$ with appropriate ribbon graphs, after a short calculation one arrives at
\begin{align}
	\zzc\big((N,\iota_0^\pm);A\big) &= \dim\! \big(C_{l/r}(A\big)) \cdot |I| \, ,
	\nonumber \\
	\zzc\big((N,\iota_1^\pm);A\big) &= \sum_{i \in I} \dim_\Bbbk \Hom_{\Cat{C}} \! \big(i \otimes C_{l/r}(A),i\big)  \, ,
\end{align}
where~$I$ is a set of representatives of simple objects as in Remark~\ref{remark:rel-rib-def}. 
One always has $\dim C_l(A) = \dim C_r(A)$ (combine \cite[Thm.\,4.5]{Kirillov:2001ti} with \cite[Thm.\,5.20]{Frohlich:2003hm}), so that $\zzc((N,\iota_0^\pm);A)$ does not distinguish orientations.\footnote{For $\zzc((N,\iota_1^\pm);A)$ we do not know an example which can distinguish the two orientations~$\pm$, but we also are not aware of a general argument excluding such examples.}

For $\iota_2$ we need another ingredient. To $A$ one can associate an $|I| \times |I|$-matrix $\tilde Z(A)_{ij}$ with non-negative integral entries as in \cite[Eqn.\,(5.44)]{tft1}. This matrix is related to modular invariant partition functions of rational conformal field theory, we refer to \cite{tft1} for more details and references. 
(It is also related to a third notion of a centre for an algebra $A$, the \textsl{full centre} \cite{Fjelstad:2006aw,Davydov:2009}.)
The computation of $\zzc$ in this case reduces to taking the trace of $\zrt$ evaluated on the 3-manifold and ribbon graph shown in \cite[Eqn.\,(5.24)]{tft1}, with the result
\be
	\zzc\big((N,\iota_2);A\big) = \sum_{i \in I} \tilde Z(A)_{ii} \, .
\ee
We note that for the label of the transparent defect, $A=\one$, we have $C_{l/r}(A)=\one$ and $\tilde Z(A)_{ij} = \delta_{i,j}$, so that in this case all three invariants reduce to $\zrt(S^1 \times S^1 \times S^1) = |I|$, as expected.

To give a concrete example where one finds different values, consider the modular tensor category associated to the affine Lie algebra $\widehat{\mathfrak{sl}}(2)_k$ at level $k=16$, with simple objects $U_0,U_1,\dots, U_{16}$. 
There are three Morita classes of simple $\Delta$-separable Frobenius algebras \cite{Ostrik:2001}, labelled by the Dynkin diagrams $\textrm{A}_{17}$, $\textrm{D}_{10}$, $\textrm{E}_7$,  with representatives 
$A(\textrm{A}_{17})=\one$, 
$A(\textrm{D}_{10}) = U_0 \oplus U_{16}$, 
$A(\textrm{E}_{7}) = U_0 \oplus U_8 \oplus U_{16}$, see also \cite[Sect.\,3.6.2]{tft1}.
Their left/right centres are
\be
	C_{l/r}(A(\textrm{D}_{10})) = C_{l/r}(A(\textrm{E}_{7})) = U_0 \oplus U_{16} \, ,
\ee
which have categorical dimension~2. The following table gives the value of $\zzc((N,\iota);A)$ in the different cases:
\begin{center}
\begin{tabular}{c|ccc}
& $A_{17}$ & $D_{10}$ & $E_{7}$ 
\\
\hline  
$\iota^\pm_0$ & $17$ & $34$ & $34$ 
\\
$\iota^\pm_1$ & $17$ & $18$ & $18$
\\
$\iota_2$ & $17$ & $10$ & $7$
\end{tabular}
\end{center}
The relevant matrices for the last line can be found in \cite{Cappelli:1987xt}, but in any case the trace is equal to the number of nodes of the Dynkin diagram labelling the Morita class.

\end{document}